\documentclass[12pt]{article}
%\usepackage[pdftex]{graphicx}
%\DeclareGraphicsExtensions{.pdf, .jpg}
%for xfig with PDF/LATEX EXPORT
%remember: when writing text in xfig, set the special flag to special
\usepackage[pdftex]{graphicx,color}
\DeclareGraphicsRule{.pdftex}{pdf}{*}{}
%end xfig for pdf
%%%%%\usepackage{epsfig}
%\usepackage{fancyhdr}
%\pagestyle{fancy}
\usepackage{amssymb}
\usepackage{amsmath}
% for xfig
%\usepackage{graphicx}
%\usepackage{color}
% end for xfig

% external references
\usepackage{xr}
\externaldocument[TS-]{tgt-splash-paper}

\setlength{\topmargin}{-1cm} 
\setlength{\textheight}{21.8cm}
\setlength{\oddsidemargin}{.5cm}
\setlength{\evensidemargin}{-1cm} 
\setlength{\textwidth}{15cm}
\setlength{\parindent}{0pt}
\setlength{\parskip}{.35cm}

% have replace the following commands with \bbn,\bbc
%\newcommand\C{{\cal C}}
%\newcommand\E{{\cal E}}

\newcommand\K{{\cal K}}

\newcommand\PG{{\rm PG}}
\newcommand\PGL{{\rm PGL}}

\newcommand\GF{{\rm\mbox{GF}}}

\newcommand\D{{\cal D}}

\renewcommand{\P}{\mathcal{P}}

\newcommand\st{:}

\renewcommand{\H}{{\mathcal H}}

%partial derivatives

%%%%
\newcommand{\Label}{\label}

%\newcommand{\LabelTable}[1]{\label{#1}{\rm [Table called {\it #1}]}}
%\newcommand{\Label}[1]{\label{#1}\marginpar{#1}}
%\newcommand{\Labele}[1]{\label{#1}{\mbox{\quad [{\it #1}]}}}
%%%

\newtheorem{theorem}{Theorem}[section]
\newtheorem{lemma}[theorem]{Lemma}
\newtheorem{corollary}[theorem]{Corollary}

\newtheorem{construction}{Construction}

\newenvironment{proof}{\noindent{\bf Proof}\hspace{0.5em}}
    { \null  \hfill $\square$ \par}

\usepackage[mathscr]{eucal}%Euler script, use \mathscr

% BruckBose equivalents

%subplane notation
\newcommand{\Bpi}{{\mathscr B}}
\newcommand{\bbb}{[\Bpi]}
\newcommand{\vbbb}{{\mathcal V}}

% conic and nrc directrix and extensions
\newcommand{\bbc}{{\mathcal C}}

\newcommand{\bbn}{{\mathcal N}}

\renewcommand{\r}{{q}}

\newcommand{\deltam}{I}

\newcommand{\R}{{\cal R}}
\renewcommand{\S}{{\cal S}}
\newcommand{\C}{{\cal C}}
\newcommand{\N}{{\cal N}}

\newcommand{\si}{\Sigma_\infty}
\newcommand{\li}{\ell_\infty}
\newcommand{\abb}{{\cal A(\cal S)}}
\newcommand{\pbb}{{\cal P(\cal S)}}

\newcommand{\orsps}{order-$\r$-subplanes}
\newcommand{\orsls}{order-$\r$-sublines}
\newcommand{\orsp}{order-$\r$-subplane}
\newcommand{\orsl}{order-$\r$-subline}

\newcommand{\takeaway}{\backslash}
\renewcommand{\st}{\,|\,}

\newcommand{\CapitalTheta}{\Theta}
\newcommand{\plustheta}{\theta}
\newcommand{\minustheta}{\theta^{\mbox{\rm {-}}}\;\!\!\!}
%{\theta^{\mbox{\rm {-}}}\!}
%\newcommand{\minustheta}{\theta_1\!}

\newcommand{\ST}{{\mathscr S}_T}
\newcommand{\tangentshadow}{shadow}
\newcommand{\tangentshadows}{shadows}

\begin{document}

\title{The tangent splash in $\PG(6,q)$}

%\date{}

\author{S.G. Barwick and Wen-Ai Jackson
%\date{\today}
\\ School of Mathematics, University of Adelaide\\
Adelaide 5005, Australia
\\
}

\maketitle

%\tableofcontents

Corresponding Author: Dr Susan Barwick, University of Adelaide, Adelaide
5005, Australia. Phone: +61 8 8303 3983, Fax: +61 8 8303 3696, email:
susan.barwick@adelaide.edu.au

Keywords: Bruck-Bose representation, cubic extension,  subplanes

AMS code: 51E20

\begin{abstract}
Let $\Bpi$ be a subplane of $\PG(2,q^3)$ of order $q$ that is tangent to
$\li$. Then
the tangent splash of $\Bpi$ is defined to be the set of $q^2+1$ points of
$\li$ that lie on a line of $\Bpi$. In the
Bruck-Bose representation of $\PG(2,q^3)$ in $\PG(6,q)$, we investigate the
interaction between the ruled surface corresponding to $\Bpi$ and the planes
corresponding to the tangent splash of $\Bpi$. 
We then give a geometric construction of the unique \orsp\ determined by a
given tangent splash and a
fixed \orsl. 
\end{abstract}

\section{Introduction}

Let $\Bpi$ be a subplane of $\PG(2,q^3)$ of order $q$ that is tangent to
$\li$ in the point $T$. The {\em tangent splash} $\ST$ of $\Bpi$ is defined to be
the set of points of $\li$ that lie on a line of $\Bpi$. So $\ST$ has
$q^2+1$ points, and contains the point $T$, called the {\em centre}. 
Properties of the tangent splash were investigated in \cite{barwtgt1}, and
this investigation is continued here. 
We will work in the Bruck-Bose representation of $\PG(2,q^3)$ in
$\PG(6,q)$, and study the tangent splash in this setting. 
Sections~\ref{sect:BB} and \ref{coord-subplane} comprise the 
relevant background material for this article.

The \orsp\ $\Bpi$ corresponds to a ruled surface in $\PG(6,q)$, and the
tangent splash $\ST$ corresponds to a set of $q^2+1$ planes in
$\PG(6,q)$. In Section~\ref{sect:structure}, we investigate the geometric interaction
between this ruled surface and plane set. Section~\ref{sect:tgtsubspace}
uses this interaction to describe the tangent subspace of a point $P\in\Bpi$ (defined in
\cite{barwtgt1}) in the Bruck-Bose setting.

In \cite{barwtgt1}, the following result is proved.

\begin{theorem}\Label{splashextsubplane}
In $\PG(2,q^3)$, let $\ST$ be a tangent splash of $\li$ and let $\ell$ be an \orsl\ disjoint from
$\li$ lying on a line which meets
$\ST\backslash\{T\}$. Then there is a unique \orsp\ tangent to $\li$ that contains $\ell$
and has tangent splash $\ST$.
\end{theorem}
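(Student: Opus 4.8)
The plan is to reduce the theorem to a construction in the Bruck--Bose representation of $\PG(2,q^3)$ in $\PG(6,q)$ and then to read off both existence and uniqueness from that construction. The conceptual starting point, at the level of $\PG(2,q^3)$, is that two distinct full lines of a projective plane of order $q$ generate the plane; hence an \orsp\ $\Bpi$ tangent to $\li$ at $T$ is generated by $\ell$ (which is a line of $\Bpi$ not through $T$, by hypothesis) together with any one line $m$ of $\Bpi$ through $T$. Writing $Q=\subsp{\ell}\cap\li\in\ST\setminus\{T\}$ and fixing a point $A\in\ell$, the line $m$ joining $A$ to $T$ in $\Bpi$ is an \orsl\ of $\subsp{A,T}$ through $A$ and $T$. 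Thus the theorem is equivalent to: given $\ell$ as in the hypothesis, there is exactly one \orsl\ $m$ of $\subsp{A,T}$ through $A$ and $T$ for which $\ell$ and $m$ generate an \orsp\ that is tangent to $\li$ and has tangent splash $\ST$. I would prove this by constructing the corresponding ruled surface in $\PG(6,q)$.

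Recall the correspondences developed in Sections~\ref{sect:structure} and~\ref{sect:tgtsubspace}: the point $T$ corresponds to a spread plane $[T]$; the tangent splash $\ST$ corresponds to a set of $q^2+1$ spread planes, one of which is $[T]$; an \orsp\ $\Bpi$ tangent to $\li$ at $T$ corresponds to a ruled surface $\vbbb$ whose $q+1$ generators are pairwise skew lines, each meeting $[T]$, where the generator of $\vbbb$ containing the image of an affine point $P\in\Bpi$ is the image of the line of $\Bpi$ through $T$ and $P$; each line of $\Bpi$ not through $T$ corresponds to a twisted cubic on $\vbbb$ whose span meets $\sinfty$ in the splash plane attached to that line; and the tangent subspace $\pp P$ of $P\in\Bpi$ (introduced in \cite{barwtgt1}) admits an explicit Bruck--Bose description. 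Since $\ell$ is disjoint from $\li$ with $\subsp{\ell}\cap\li=Q$, its image $\bbn$ is a twisted cubic whose span $\Sigma$ satisfies $\Sigma\cap\sinfty=[Q]$. For \emph{existence} I would build $\vbbb$ generator by generator: through each point $P$ of $\bbn$, a generator of the surface being built must be a line on $P$ that meets $[T]$ and that is constrained by the tangent-subspace description of Section~\ref{sect:tgtsubspace} together with the splash; I claim these conditions single out a unique line $L_P$, and I take $\vbbb$ to be the union of the lines $L_P$, $P\in\bbn$. One then checks that the $L_P$ are pairwise skew, that $\vbbb$ meets exactly the $q^2+1$ planes of the splash and no other spread plane, and that the affine points of $\vbbb$ transfer back to a point set of $\PG(2,q^3)$ which, by the coordinatisation of Section~\ref{coord-subplane}, is closed under the \orsl\ operation and hence is an \orsp. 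The resulting \orsp\ $\Bpi_0$ then contains $\ell$ as the line attached to $Q$, is tangent to $\li$ at $T$, and has tangent splash $\ST$.

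For \emph{uniqueness}, let $\Bpi_1$ and $\Bpi_2$ be \orsps\ tangent to $\li$, each containing $\ell$ and with tangent splash $\ST$, with ruled surfaces $\vbbb_1$ and $\vbbb_2$. Both surfaces contain the twisted cubic $\bbn$, and through each $P\in\bbn$ each surface has a generator; by the structure theory this generator is a line on $P$ meeting $[T]$ and constrained by $\pp P$ and the splash, so it is the unique line $L_P$ above. Hence $\vbbb_1$ and $\vbbb_2$ have the same generators, so $\Bpi_1=\Bpi_2$; equivalently, the line $m$ produced by the construction is the only admissible one. The main obstacle is exactly the claim that for $P\in\bbn$ the conditions imposed on its generator --- passing through $P$, meeting $[T]$, lying in the prescribed tangent subspace, and being compatible with all $q^2+1$ splash planes --- determine it \emph{uniquely} and can be made explicit from $\bbn$, $[T]$ and the splash alone. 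Establishing existence here is a consistency check that the candidate line really lies on the surface one is assembling, and establishing uniqueness here is in effect the uniqueness asserted in the theorem; both rest on the detailed description, from Section~\ref{sect:structure}, of how the ruled surface of a tangent \orsp\ meets the planes of its tangent splash. Once that is in hand, the remaining steps --- closure under sublines, and the counts of $q^2+1$ splash planes, $q+1$ generators and $q^2+q$ affine points --- are routine.
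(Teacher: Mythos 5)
Your reduction to the Bruck--Bose setting and the generator-by-generator plan are in the spirit of Construction~\ref{construction}, but the proposal has a genuine gap at exactly the point you flag yourself: the claim that through each point $P$ of the twisted cubic $[\ell]$ your conditions single out a unique candidate generator $L_P$. As stated, those conditions are circular: the tangent subspace $P^\perp$ (and its description in Section~\ref{sect:tgtsubspace}) is defined via the $q+1$ lines of the subplane $\Bpi$ through $P$, that is, via the very \orsp\ you are trying to construct, so it cannot serve as a ``prescribed'' constraint depending only on $\ell$, $[T]$ and $[\ST]$. Your own remark that establishing uniqueness of $L_P$ ``is in effect the uniqueness asserted in the theorem'' concedes that the proposal defers, rather than proves, the content of the statement. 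To make the generator well defined from the given data one needs the machinery this paper develops: the cover planes of the splash (Theorem~\ref{coverplanes}), which depend on $\ST$ alone; the \tangentshadow\ of $[\ell]$ in $[L]$; the bijection taking a \tangentshadow\ to a special conic of $[T]$ (Theorem~\ref{step2}, whose proof splits into $q$ even and odd and uses Segre's theorem plus explicit coordinates); the fact that the induced map $N_i\mapsto C_i$ is the projectivity whose ruled surface contains the transversals of $\S$ (Theorem~\ref{nrctoconic}); and Theorem~\ref{subplane-tangent-iff} to conclude that the assembled surface corresponds to an \orsp. None of the steps you label routine (pairwise skewness, closure, the splash count) is routine without this; they are where the work lies.

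It is also worth noting that this paper does not prove Theorem~\ref{splashextsubplane}: it quotes it from \cite{barwtgt1}, and even the proof of Construction~\ref{construction} in Section~\ref{sect:construct} invokes the theorem (to name the unique subplane and settle uniqueness, identifying the constructed surface with the coordinatised $[\Bpi]$) rather than rederiving it. So a correct version of your plan would either have to reproduce the arguments of \cite{barwtgt1}, or make the construction of Section~\ref{sect:construct} self-contained, including a direct proof that the constructed subplane has tangent splash $\ST$ and that no second subplane through $\ell$ has that splash; the proposal supplies neither.
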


In Section~\ref{sect:construct} we start with a tangent splash $\ST$ and such an
\orsl\ $\ell$, and show how to  construct this unique \orsp. This
is a geometric construction in the Bruck-Bose representation
in $\PG(6,q)$.

\section[Bruck-Bose representation]{The Bruck-Bose representation of
  $\PG(2,q^3)$ in $\PG(6,q)$}\Label{sect:BB}

This section contains the necessary definitions, notation  and results about the
Bruck-Bose representation of $\PG(2,q^3)$ in $\PG(6,q)$.

\subsection[Bruck-Bose]{The Bruck-Bose representation}\Label{BBintro}

We begin with a brief introduction to  $2$-spreads in $\PG(5,q)$, 
see \cite{hirs91} for more details. 
A 2-{\em spread} of $\PG(5,\r)$ is a set of $\r^3+1$ planes that partition
$\PG(5,\r)$. A 2-{\em regulus} of $\PG(5,\r)$ is a
set of $\r+1$ mutually disjoint planes $\pi_1,\ldots,\pi_{\r+1}$ with
the property that if a line meets three of the planes, then it meets all
${\r+1}$ of them. Three mutually disjoint planes in $\PG(5,\r)$ lie in a unique
2-regulus. A 2-spread $\S$ is {\em regular} if for any three planes in $\S$, the
2-regulus containing them is contained in $\S$. 

The following construction of a regular 2-spread of $\PG(5,\r)$ will be
needed. Embed $\PG(5,\r)$ in $\PG(5,\r^3)$ and let $g$ be a line of
$\PG(5,\r^3)$ disjoint from $\PG(5,\r)$. Let $g^\r$, $g^{\r^2}$ be the
conjugate lines of $g$; both of these are disjoint from $\PG(5,\r)$. Let $P_i$ be
a point on $g$; then the plane $\langle P_i,P_i^\r,P_i^{\r^2}\rangle$ meets
$\PG(5,\r)$ in a plane. As $P_i$ ranges over all the points of  $g$, we get
$\r^3+1$ planes of $\PG(5,\r)$ that partition $\PG(5,\r)$. These planes form a
regular spread $\S$ of $\PG(5,\r)$. The lines $g$, $g^\r$, $g^{\r^2}$ are called the (conjugate
skew) {\em transversal lines} of the spread $\S$. Conversely, given a regular 2-spread
in $\PG(5,\r)$,
there is a unique set of three (conjugate skew) transversal lines in $\PG(5,\r^3)$ that generate
$\S$ in this way.

We now describe the Bruck-Bose representation of a finite
translation plane $\P$ of order $q^3$ with kernel containing $GF(q)$,
an idea which was developed independently by
Andr\'{e}~\cite{andr54} and Bruck and Bose
\cite{bruc64,bruc66}. 
Let $\si$ be a hyperplane of $\PG(6,\r)$ and let $\S$ be a 2-spread
of $\si$. We use the phrase {\em a subspace of $\PG(6,\r)\takeaway\si$} to
  mean a subspace of $\PG(6,\r)$ that is not contained in $\si$.  Consider the following incidence
structure:
the \emph{points} of $\abb$ are the points of $\PG(6,\r)\takeaway\si$; the \emph{lines} of $\abb$ are the 3-spaces of $\PG(6,\r)\takeaway\si$ that contain
  an element of $\S$; and \emph{incidence} in $\abb$ is induced by incidence in
  $\PG(6,\r)$.
Then the incidence structure $\abb$ is an affine plane of order $\r^3$. We
can complete $\abb$ to a projective plane $\pbb$; the points on the line at
infinity $\li$ have a natural correspondence to the elements of the 2-spread $\S$.
The projective plane $\pbb$ is the Desarguesian plane $\PG(2,\r^3)$ if and
only if $\S$ is a regular 2-spread of $\si\cong\PG(5,\r)$ (see
\cite{bruc69}).
For the remainder of this article we work in $\PG(2,\r^3)$, so $\S$ denotes
 a regular 2-spread of $\PG(5,q)$. 

We use the following notation: if $P$ is an affine point of $\PG(3,q^3)$, we also
use $P$ to refer to the corresponding affine point in $\PG(6,q)$. If $T$
is a point of $\li$ in $\PG(2,q^3)$, we use $[T]$ to refer to the spread
element of $\S$ in $\PG(6,q)$ corresponding to $T$.
More generally, if $X$ is a set of points in $\PG(2,q^3)$, we use $[X]$ to
denote the corresponding set in $\PG(6,q)$.

As $\S$ is a regular spread, $\pbb\cong\PG(2,\r^3)$ and 
we can relate the coordinates of $\PG(2,\r^3)$ and $\PG(6,\r)$ as
follows. Let $\tau$ be a primitive element in $\GF(\r^3)$ with primitive
polynomial $$x^3-t_2x^2-t_1x-t_0,$$ where $t_0,t_1,t_2\in\GF(q)$. Then every element $\alpha\in\GF(\r^3)$
can be uniquely written as $\alpha=a_0+a_1\tau+a_2\tau^2$ with
$a_0,a_1,a_2\in\GF(\r)$. Points in $\PG(2,\r^3)$ have homogeneous coordinates
$(x,y,z)$ with $x,y,z\in\GF(\r^3)$. Let the line at infinity $\li$ have
equation $z=0$; so the affine points of $\PG(2,\r^3)$ have coordinates
$(x,y,1)$. Points in $\PG(6,\r)$ have homogeneous coordinates
$(x_0,x_1,x_2,y_0,y_1,y_2,z)$ with $x_0,x_1,x_2,y_0,y_1,y_2,z\in\GF(\r)$.  Let $\si$ have equation $z=0$. 
Let $P=(\alpha,\beta,1)$ be a point of $\PG(2,\r^3)$. We can write $\alpha=a_0+a_1\tau+a_2\tau^2$ and
$\beta=b_0+b_1\tau+b_2\tau^2$ with $a_0,a_1,a_2,b_0,b_1,b_2\in\GF(\r)$. Then
the map 
\begin{eqnarray*}
\sigma\colon \PG(2,\r^3)\takeaway\li&\rightarrow&\PG(6,\r)\takeaway\Sigma_\infty\\
(\alpha,\beta,1)&\mapsto&(a_0,a_1,a_2,b_0,b_1,b_2,1)
\end{eqnarray*}
 is the Bruck-Bose map. 
More generally, if $z\in\GF(q)$, then we can generalise the map to
$\sigma(\alpha,\beta,z)=(a_0,a_1,a_2,b_0,b_1,b_2,z)$. 
Note that if $z=0$, then $T=(\alpha,\beta,0)$ is a point of $\li$, and
$\sigma(\alpha,\beta,0)$ is a single point in the spread element $[T]$
corresponding to $T$. 

In $\cite{barw12}$, the coordinates of the transversals of the regular spread $\S$ are
calculated.
\begin{lemma}\Label{transversaleqn}
The line $g$ of $\PG(6,q^3)$ joining the points
$(t_1+t_2\tau-\tau^2,t_2-\tau,-1,0,0,0,0)$ and
$(0,0,0,t_1+t_2\tau-\tau^2,t_2-\tau,-1,0)$ is one of the
transversals of the regular spread.
\end{lemma}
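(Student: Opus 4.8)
The plan is to exploit the link between the companion matrix of the primitive polynomial $x^3-t_2x^2-t_1x-t_0$ and the coordinatisation map $\sigma$. Write $[\lambda]=(a_0,a_1,a_2)$ for the coordinate row of $\lambda=a_0+a_1\tau+a_2\tau^2\in\GF(q^3)$ with respect to the basis $1,\tau,\tau^2$, so that $\sigma(\alpha,\beta,z)=([\alpha],[\beta],z)$ and, for $T=(\alpha,\beta,0)\in\li$, the spread element $[T]$ is the plane of $\si$ whose underlying vector space is $\{([\lambda\alpha],[\lambda\beta],0):\lambda\in\GF(q^3)\}$. Multiplication by $\tau$ on $\GF(q^3)$ is, in this basis, the companion matrix $M$ of $x^3-t_2x^2-t_1x-t_0$; its eigenvalues are $\tau,\tau^\r,\tau^{\r^2}$, and solving $Mv=\tau v$ gives the $\tau$-eigenvector $v=(\tau^2-t_2\tau-t_1,\ \tau-t_2,\ 1)$, with $v^\r,v^{\r^2}$ the eigenvectors for $\tau^\r,\tau^{\r^2}$. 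The key observation is that the two points spanning $g$ are exactly $(-v,\underzero,0)$ and $(\underzero,-v,0)$, so a general point of $g$ may be written $P_\rho=(v,\rho v,0)$ for $\rho\in\GF(q^3)$, together with $P_\infty=(\underzero,v,0)$.

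First I would record that $g$ is disjoint from $\si$: the third entry of $v$ is $1$ while its second entry $\tau-t_2\notin\GF(q)$, so no point of $g$ is $\GF(q)$-rational; hence by the construction in Section~\ref{BBintro} the lines $g,g^\r,g^{\r^2}$ generate a regular $2$-spread $\S'$ of $\si$, whose members are the planes $\langle P,P^\r,P^{\r^2}\rangle\cap\si$, $P\in g$. It then remains only to check $\S'=\S$. To compute these intersections, write $[1]=c_0v+c_1v^\r+c_2v^{\r^2}$ with $c_i\in\GF(q^3)$; since $[1]=(1,0,0)\in\GF(q)^3$ is fixed by the entrywise Frobenius map and the $v^{\r^i}$ are linearly independent over $\GF(q^3)$, this forces $c_1=c_0^\r$ and $c_2=c_0^{\r^2}$. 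As multiplication by any $\lambda\in\GF(q^3)$ is a polynomial in $M$ and therefore has $v^{\r^i}$ as eigenvector with eigenvalue $\lambda^{\r^i}$, we obtain
\begin{equation*}
[\lambda]=(\lambda c_0)\,v+(\lambda c_0)^\r\,v^\r+(\lambda c_0)^{\r^2}\,v^{\r^2}.
\end{equation*}
Substituting into $P_\rho$ shows that the point $(\lambda c_0)P_\rho+(\lambda c_0)^\r P_\rho^\r+(\lambda c_0)^{\r^2}P_\rho^{\r^2}$ of $\langle P_\rho,P_\rho^\r,P_\rho^{\r^2}\rangle$ equals $([\lambda],[\lambda\rho],0)$, which is $\GF(q)$-rational. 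As $\lambda$ runs over $\GF(q^3)$, these points sweep out the plane $[T_\rho]$ with $T_\rho=(1,\rho,0)$; since a plane of $\PG(6,q^3)$ contains at most a subplane $\PG(2,q)$ of $\GF(q)$-rational points, this exhausts $\langle P_\rho,P_\rho^\r,P_\rho^{\r^2}\rangle\cap\si$. Similarly $P_\infty$ yields $[(0,1,0)]$. As $\rho$ ranges over $\GF(q^3)\cup\{\infty\}$ the points $(1,\rho,0)$ and $(0,1,0)$ are precisely the $q^3+1$ points of $\li$, so $\S'=\S$ and $g$ is one of the transversals of $\S$.

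I expect the one point needing care to be the penultimate step: correctly expanding the generic point of $\langle P_\rho,P_\rho^\r,P_\rho^{\r^2}\rangle$ in the eigenbasis, and ensuring the family $\{([\lambda],[\lambda\rho],0)\}$ really is the whole intersection with $\si$ and not a proper subset. Invoking that the $q^3+1$ spread members partition $\si$ (so each has exactly $q^2+q+1$ points) is the clean way to close that gap; the remaining computations — the eigenvector of $M$, the disjointness of $g$ from $\si$, and the Frobenius relations among the $c_i$ — are routine.
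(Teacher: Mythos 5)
The paper does not prove Lemma~\ref{transversaleqn} itself: it is quoted from \cite{barw12}, where the coordinates of the transversals are computed, so there is no in-paper proof to compare against and your argument has to stand on its own. On its own terms it is correct. Identifying the two given points as $(-v,0,0,0)$ and $(0,0,0,-v,0)$ with $v=(\tau^2-t_2\tau-t_1,\ \tau-t_2,\ 1)$ the $\tau$-eigenvector of the companion matrix of $x^3-t_2x^2-t_1x-t_0$ (the matrix of multiplication by $\tau$ in the basis $1,\tau,\tau^2$) is right, and so are the key computations: the Frobenius relations $c_1=c_0^{q}$, $c_2=c_0^{q^2}$, the identity $[\lambda]=(\lambda c_0)v+(\lambda c_0)^{q}v^{q}+(\lambda c_0)^{q^2}v^{q^2}$, and the consequence that $(\lambda c_0)P_\rho+(\lambda c_0)^{q}P_\rho^{q}+(\lambda c_0)^{q^2}P_\rho^{q^2}=([\lambda],[\lambda\rho],0)$, so that $\langle P_\rho,P_\rho^{q},P_\rho^{q^2}\rangle$ meets $\si$ in exactly the spread element $[(1,\rho,0)]$ (your cap of the rational points of a plane of $\PG(6,q^3)$ at a $\PG(2,q)$, equivalently the fact that spread elements have exactly $q^2+q+1$ points, closes this cleanly), with $P_\infty$ giving $[(0,1,0)]$; since every element of $\S$ arises and the transversals of a regular spread are characterised as the lines generating it in this way, $g$ is a transversal. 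Your use of the fact that $[T]=\{\sigma(\lambda\alpha,\lambda\beta,0)\,\st\,\lambda\in\GF(q^3)^*\}$ for $T=(\alpha,\beta,0)$ is also justified by the paper's description of $\sigma$ together with a count. One small remark: the preliminary appeal to the construction of Section~\ref{BBintro} to assert in advance that $g$ generates a regular spread is unnecessary (and that background statement, read literally, needs $g,g^q,g^{q^2}$ in general position, e.g.\ not all inside the extension of a rational plane); your explicit computation already shows the planes obtained from $g$ are precisely the elements of $\S$, which is all the lemma needs.
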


%We also note that as $\tau$ is a solution to $x^3-t_2x^2-t_1x-t_0$, the
%other two roots are $\tau^\r,\tau^{\r^2}$.  Hence 
%$$t_2=\tau+\tau^\r+\tau^{\r^2},\
%-t_1=\tau\tau^\r+\tau\tau^{\r^2}+\tau^\r\tau^{\r^2},\ t_0=\tau\tau^\r\tau^{\r^2}.$$

\subsection[Tangent subplanes]{Subplanes and sublines in the Bruck-Bose representation}

An \emph{\orsp} of $\PG(2,\r^3)$ is a subplane of
$\PG(2,q^3)$ of order $q$. An \emph{\orsl} of
$\PG(2,q^3)$ is a line of an \orsp\ of $\PG(2,q^3)$. Note that an \orsp\ is
the image of the subplane $\PG(2,q)$ under the collineation group
$\PGL(3,q^3)$, and an \orsl\ is the image of the subline $\PG(1,q)$ under $\PGL(3,q^3)$.
In \cite{barw12}, the authors determine the representation of \orsps\  and
\orsls\ of $\PG(2,q^3)$ in the Bruck-Bose representation in $\PG(6,q)$, 
we quote the results we need here.
 We first introduce some notation to simplify the statements. A {\em special conic} $\C$ is a conic in a spread element, such
that when we extend $\C$ to $\PG(6,q^3)$, it meets the transversals of
the regular spread $\S$. Similarly, a {\em special twisted cubic} $\N$ is a
twisted cubic in a 3-space of $\PG(6,q)\backslash\si$ about a spread
element,  such that when we extend $\N$ to $\PG(6,q^3)$, it meets the
transversals of $\S$. Note that a special twisted cubic has no
points in $\si$.

\begin{theorem}{\rm \cite{barw12}}\Label{sublinesinBB}
Let $b$ be an \orsl\ of $\PG(2,q^3)$. 
\begin{enumerate}
\item\Label{subline-secant-linfty} If $b\subset\li$, then in $\PG(6,q)$, $b$ corresponds to a 
$2$-regulus of $\S$. Conversely every $2$-regulus of $\S$ corresponds to
an \orsl\ of $\li$.
\item If $b$ meets $\li$ in a point, then in $\PG(6,q)$, $b$  corresponds to
  a line of $\PG(6,q)\backslash\si$. Conversely every line of
  $\PG(6,q)\backslash\si$ corresponds to an \orsl\ of $\PG(2,q^3)$ tangent
  to $\li$.
\item\Label{FFA-orsl}
If $b$ is disjoint from $\li$, then in
$\PG(6,q)$, $b$ corresponds to a special twisted cubic. Further, a
twisted cubic $\N$ of $\PG(6,q)$ corresponds to an \orsl\ of
$\PG(2,q^3)$ if and only if $\N$ is special.
\end{enumerate}
\end{theorem}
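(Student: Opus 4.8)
The plan is to treat the three cases separately: in each I first reduce to a single canonical \orsl\ by transitivity, then read off its image in $\PG(6,q)$ from the coordinatisation, and finally get the converse by a uniqueness or counting argument. The group I would use is the stabiliser $G$ of $\li$ in $\PGL(3,q^3)$; by the standard theory of the Bruck-Bose representation every element of $G$ induces a collineation of $\PG(6,q)$ fixing $\si$ and stabilising $\S$. A routine orbit--stabiliser count shows $G$ is transitive on the \orsls\ contained in $\li$ (on $\li$, $G$ induces $\PGL(2,q^3)$, which is transitive on \orsls\ of $\PG(1,q^3)$ by $3$-transitivity), on the \orsls\ tangent to $\li$, and on the \orsls\ disjoint from $\li$ (using that $G$ is transitive on lines $\neq\li$ and that the group it induces on such a line $\ell$ fixing $\ell\cap\li$ is the affine group of $\ell$, which is transitive on the \orsls\ of $\ell$ through, respectively avoiding, $\ell\cap\li$). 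The one computational fact I need throughout is: for a line $\ell$ of $\PG(2,q^3)$ with $T'=\ell\cap\li$, if $\Pi$ is the $3$-space of $\PG(6,q)\setminus\si$ representing $\ell$ (so $\Pi\cap\si=[T']$), then $\sigma$ restricts to an affine bijection from the affine points of $\ell$ onto $\Pi\setminus\si$ whose linear part is a $\GF(q)$-isomorphism of $\GF(q^3)$ onto the vector space underlying $[T']$; this is immediate from the formula for $\sigma$, since $\theta\mapsto(\,\text{$\GF(q)$-coordinates of }\theta\alpha\text{ and }\theta\beta\,)$ is $\GF(q)$-linear. Since the \orsls\ of $\ell$ through $T'$ are exactly $\{T'\}\cup(\text{coset of a $1$-dimensional $\GF(q)$-subspace of }\ell\setminus\{T'\}\cong\AG(1,q^3))$, this fact says that $\sigma$ maps the affine part of each such \orsl\ to a line of $\PG(6,q)\setminus\si$ lying in $\Pi$, and every such line arises this way.

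For part~(1), take the canonical \orsl\ $b=\{(x:y:0)\st x,y\in\GF(q)\}$ of $\li$. Identifying a point $(\alpha:\beta:0)$ of $\li$ with the $1$-dimensional $\GF(q^3)$-subspace $\langle(\alpha,\beta)\rangle$ of $\GF(q^3)^2$, its spread element is the plane of $\si$ obtained by expanding that subspace over the $\GF(q)$-basis $\{1,\tau,\tau^2\}$. Letting $(x:y)$ run over $\PG(1,q)$ yields exactly the $q+1$ planes $\langle(v,mv)\st v\in\GF(q)^3\rangle$ ($m\in\GF(q)$) together with $\langle(v,0)\st v\in\GF(q)^3\rangle$, which is visibly the $2$-regulus on the three planes obtained from $(1:0),(0:1),(1:1)$; by regularity of $\S$ this $2$-regulus lies in $\S$. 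Conversely, a $2$-regulus $\R\subseteq\S$ contains three planes, which are three points of $\li$ lying in a unique \orsl\ $b$, and then $[b]$ is a $2$-regulus through those three planes, so $[b]=\R$ because three mutually disjoint planes lie in a unique $2$-regulus.

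For part~(2), the canonical tangent \orsl\ on a line $\ell$ is $b=\{\theta\in\GF(q)\}\cup\{T'\}$; its affine part is the $1$-dimensional $\GF(q)$-subspace $\GF(q)$ of $\ell\setminus\{T'\}$, so by the linearity fact $\sigma$ maps it to a line of $\PG(6,q)\setminus\si$. Conversely, a line $L$ of $\PG(6,q)\setminus\si$ meets $\si$ in one point $P_\infty$, lying in a unique spread element $[T']$; spanning $L$ with $[T']$ gives a $3$-space $\Pi$ with $\Pi\cap\si=[T']$, hence the representation of a line $\ell$ through $T'$, and $L\subset\Pi$. The $\sigma$-preimages of the $q$ affine points of $L$ are collinear on $\ell$ — any two of them span $\Pi$ together with $[T']$, so the line of $\PG(2,q^3)$ through them meets $\li$ at $T'$, and distinct lines through an affine point meet $\li$ at distinct points — and since $L\setminus\{P_\infty\}$ is a line of the affine space $\Pi\setminus\si$, its preimage is an affine \orsl; hence $L$ corresponds to a tangent \orsl.

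For part~(3), the canonical \orsl\ disjoint from $\li$ on a line $\ell$ is $b=\{1/(x-\mu)\st x\in\GF(q)\}\cup\{0\}$, where $\mu\in\GF(q^3)\setminus\GF(q)$ so that $1/(x-\mu)\neq\infty$ for all $x\in\GF(q)$. Clearing denominators by the monic cubic $f(Y)=\prod_{i}(Y-\mu^{q^i})$ (the minimal polynomial of $\mu$, which for $x\in\GF(q)$ is the norm of $x-\mu$) and using the $\GF(q)$-linearity of $\sigma$ expresses the $q+1$ image points, together with $\sigma$ of the point $\theta=0$, as a rational curve of degree at most $3$ in $\Pi$ that spans $\Pi$; so it is a twisted cubic $\N$, with $\N\cap\si=\N\cap[T']=\emptyset$ since $b$ has no point on $\li$. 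Finally $\N$ is \emph{special}: on extending to $\PG(6,q^3)$, the points of $\overline\N$ lying in $\overline{[T']}$ are those at the parameter values $x=\mu,\mu^{q},\mu^{q^2}$, and a direct computation with Lemma~\ref{transversaleqn} identifies these with $g\cap\overline\Pi$, $g^{q}\cap\overline\Pi$, $g^{q^2}\cap\overline\Pi$, so $\overline\N$ meets the transversals of $\S$. The main obstacle is the converse: given a special twisted cubic $\N$ in a $3$-space $\Pi$ with $\Pi\cap\si=[T']$ a spread element and $\N\cap[T']=\emptyset$, one must recover an \orsl. I would either count — the number of special twisted cubics in such a $\Pi$ should equal the number $q^3(q^3-1)$ of \orsls\ disjoint from $T'$ on the corresponding line $\ell$, while the forward map is injective because an \orsl\ is determined by any three of its points — or argue constructively, using that ``special'' pins down the three conjugate points $g\cap\overline\Pi$, $g^{q}\cap\overline\Pi$, $g^{q^2}\cap\overline\Pi$ on $\overline\N$, which are exactly the data ($\mu,\mu^q,\mu^{q^2}$) needed to invert the parametrisation above and so recover $b$ up to the residual $\PGL(2,q)$. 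Proving that the two counts coincide — equivalently, that this inversion is defined over $\GF(q)$ — is the real content; everything else is bookkeeping with the coordinatisation and Lemma~\ref{transversaleqn}.
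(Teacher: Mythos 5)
This theorem is quoted from \cite{barw12} and the present paper contains no proof of it, so your argument has to stand on its own. Parts (1) and (2) of your proposal are sound: the $\GF(q)$-linearity of $\sigma$ on the affine points of a fixed line, the reduction to canonical sublines via the stabiliser of $\li$, and the converses (uniqueness of the $2$-regulus through three mutually disjoint planes; spanning a line of $\PG(6,q)\setminus\si$ with its spread element) are exactly the right ingredients. In the forward half of part (3) two of your assertions carry real weight and should be written out, although both are true: that the degree-$\le 3$ parametrisation spans the $3$-space $[\overline\ell\,]$ (this uses precisely that $\mu$ has degree $3$ over $\GF(q)$ -- otherwise the three quadratic coordinate polynomials could be dependent and the curve degenerate), and that the three points of the extended curve at the parameters $\mu,\mu^q,\mu^{q^2}$ are the transversal points of $[L]^*$ (this amounts to checking that the resulting coordinate vector is an eigenvector of the multiplication-by-$\tau$ matrix $T$, which is what the coordinates in Lemma~\ref{transversaleqn} encode).

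The genuine gap, which you flag yourself, is the converse of part (3): that \emph{every} special twisted cubic corresponds to an \orsl. This is the substantive half of the statement and neither of your two routes is carried out; ``the two counts coincide'' is asserted, not proved. The counting route can in fact be completed: injectivity of subline $\mapsto$ cubic is immediate since $\sigma$ is a bijection on affine points, the number of \orsls\ of a line of $\PG(2,q^3)$ avoiding its infinite point is $q^3(q^3-1)$, and a standard incidence count -- using that the cubic extension of a twisted cubic of $\PG(3,q)$ contains $(q^3-q)/3$ conjugate triples of non-rational points, each spanning a rational plane, that $\PGL(4,q)$ is transitive on such triples, together with the total numbers of twisted cubics and of such triples -- shows that the number of twisted cubics of $[\overline\ell\,]$ whose extension contains the transversal point $g\cap[L]^*$ is exactly $q^3(q^3-1)$, whence the correspondence is onto the special cubics. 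But this computation (or the equivalent rationality argument for your constructive inversion) is the real content of the converse, and as submitted your proposal proves only one direction of part (3). Note also that you could not shortcut this by appealing to Theorem~\ref{PGL2r3}(3), since the transitivity of $[E]$ on special twisted cubics stated there is itself derived from the very correspondence being proved.
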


\begin{theorem}{\rm \cite{barw12}}\Label{FFA-subplane-tangent}
Let $\Bpi$ be an \orsp\ of $\PG(2,\r^3)$.
\begin{enumerate}
\item If $\Bpi$ is secant to $\li$, then
  in $\PG(6,q)$,
  $\Bpi$ corresponds to a plane of $\PG(6,q)$ that meets $q+1$ spread
  elements. Conversely, any plane of $\PG(6,q)$ that meets $q+1$ spread
  elements corresponds to an \orsp\ of $\PG(2,q^3)$ secant to $\li$. 
\item  Suppose  $\Bpi$  is tangent to $\li$ in the
  point $T$. 
Then $\Bpi$ determines a set $\bbb$  of points in $\PG(6,\r)$ (where the
affine points of $\Bpi$ correspond to the affine points of $\bbb$) such that:
\begin{enumerate}
\item[{\rm (a)}]  $\bbb$ is a ruled surface with conic directrix $\bbc$ contained in
  the plane $[T]\in\S$, and twisted cubic directrix $\bbn$ contained in a
  3-space $\Sigma$ that meets $\si$ in a spread element (distinct from
  $[T]$). The points of $\bbb$ lie on $\r+1$ pairwise disjoint generator lines joining $\bbc$ to $\bbn$.
\item[{\rm (b)}]  The $\r+1$ generator lines of $\bbb$ joining $\bbc$
  to $\bbn$ are determined by a projectivity from $\bbc$ to $\bbn$.
\item[{\rm (c)}]  When we extend $\bbb$
  to $\PG(6,\r^3)$, it contains the conjugate transversal
  lines $g,g^\r,g^{\r^2}$ of the regular spread $\S$. So $\C$ and $\N$ are special.
\item[{\rm (d)}]  $\bbb$ is the intersection of nine quadrics in $\PG(6,\r)$.
\end{enumerate}
\end{enumerate}
\end{theorem}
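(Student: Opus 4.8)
The plan is to reduce each assertion to a single canonical \orsp\ and verify it by a direct coordinate computation. Collineations of $\PG(2,q^3)$ fixing $\li$ correspond, in the Bruck--Bose setting, to collineations of $\PG(6,q)$ fixing $\si$ and the spread $\S$, and it is standard that this group acts transitively on the \orsps\ secant to $\li$ and transitively on the \orsps\ tangent to $\li$ (for the tangent case: it is transitive on the points of $\li$, and the stabiliser of $\PG(2,q)$, namely $\PGL(3,q)$, is transitive on the lines of $\PG(2,q^3)$ tangent to $\PG(2,q)$ --- it is transitive on the points of $\PG(2,q)$, and a point stabiliser acts on the pencil there, a $\PG(1,q^3)$, as $\PGL(2,q)$, which is regular on the non-rational points). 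So it suffices to treat one representative of each type. For part~1 take $\Bpi=\PG(2,q)$, with $\Bpi\cap\li=\{(a,b,0):a,b\in\GF(q)\}$; under $\sigma$ the affine points $(a,b,1)$ go to $(a,0,0,b,0,0,1)$, which span the plane $\pi\colon x_1=x_2=y_1=y_2=0$, and $\pi\cap\si$ is a line transversal to the $2$-regulus $[\Bpi\cap\li]$ of Theorem~\ref{sublinesinBB}(\ref{subline-secant-linfty}), so $\pi$ meets each of its $q+1$ spread elements. The converse reverses this: a plane meeting $q+1$ spread elements meets $\si$ in a transversal line of the $2$-regulus those elements form, and its affine points together with the \orsl\ carried by that regulus pull back under $\sigma^{-1}$ to a secant \orsp; comparing the sizes of the two sets shows every such plane is obtained.

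For part~2 take $\psi\in\PGL(3,q^3)$ carrying the line $\{(x,y,z):x=\tau y\}$, which meets $\PG(2,q)$ only in $(0,0,1)$ and so is tangent to it, onto $\li$, and set $\Bpi=\PG(2,q)^{\psi}$, tangent to $\li$ at $T=(0,0,1)^{\psi}$. The $q^2+q$ affine points of $\Bpi$ split into the $q+1$ \orsls\ that are the $\psi$-images of the $q+1$ lines of $\PG(2,q)$ through $(0,0,1)$; each of these \orsls\ passes through $T$, so by Theorem~\ref{sublinesinBB}(2) it corresponds to a line of $\PG(6,q)\backslash\si$, and since $T$ is their common point at infinity each of these $q+1$ lines meets $\si$ in a point of $[T]$. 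These $q+1$ lines will be the generator lines of $\bbb$.

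The heart of the proof is the explicit computation of $\sigma(\Bpi)$: transporting the points of $\PG(2,q)$ through $\psi$ and then $\sigma$ and reading off the resulting $7$-tuples. From this one checks that (i) the $q+1$ feet on $[T]$ are distinct and lie on a conic $\bbc$, with coordinates given by quadratic forms in the index $(a:b)\in\PG(1,q)$ of the pencil of lines through $(0,0,1)$, whence the generators are pairwise disjoint; and (ii) fixing any line $b$ of $\Bpi$ not through $T$ (an \orsl\ disjoint from $\li$, so $[b]$ is a special twisted cubic by Theorem~\ref{sublinesinBB}(\ref{FFA-orsl}) lying in the $3$-space $\Sigma$ of $\PG(6,q)$ that corresponds to the line of $\PG(2,q^3)$ containing $b$, a $3$-space which meets $\si$ in the spread element $[b\cap\li]$), each generator meets $\Sigma$ in exactly one point, and these $q+1$ points form $\bbn:=[b]$, with coordinates given by cubic forms in the same $(a:b)$. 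This yields (a), and (b) is then immediate: the generator joining a conic point to the corresponding cubic point is the image of the corresponding \orsl, and both directrices are parametrised by $(a:b)$, so the bijection between them is a projectivity. For (c) one extends $\bbb$ to $\PG(6,q^3)$; the extension of $[T]$ is the plane through a conjugate triple $P,P^{q},P^{q^2}$ of points on the transversals and meets $g$ only in $P$, so one needs only to check, using the coordinates of $g$ from Lemma~\ref{transversaleqn}, that $P$ lies on the extension of $\bbc$ and that $g$ meets the extension of $\bbn$; applying $x\mapsto x^{q}$ gives the same for $g^{q}$ and $g^{q^2}$, so $\bbc$ and $\bbn$ are special. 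For (d) one exhibits nine quadratic forms vanishing on $\sigma(\Bpi)$ --- the analogues of the $2\times 2$ minors cutting out the rational normal scroll that joins a conic to a twisted cubic --- and verifies by a dimension count that their common zero locus in $\PG(6,q)$ is exactly $\bbb$; transitivity then carries (c) and (d) to every tangent \orsp.

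The main obstacle is this explicit computation in part~2, in particular recognising the image $\sigma(\Bpi)$ as a conic, a twisted cubic and the scroll joining them. The two delicate points are verifying in step~(c) that the directrices actually pass through the transversal intersection points, rather than merely lying in subspaces that meet the transversals, and pinning down in step~(d) which nine quadrics cut out $\bbb$ and nothing larger. The transitivity facts used in the reduction, though routine, also need to be checked.
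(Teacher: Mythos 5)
A preliminary remark: the paper does not prove this theorem at all --- it is quoted as background from \cite{barw12} --- so there is no internal proof to measure you against. Your strategy (use the stabiliser of $\li$ to reduce to one explicit tangent \orsp, push it through $\sigma$, and recognise the image as a scroll with conic and twisted cubic directrices) is the same coordinate-based method as that source, and your transitivity reductions, as well as the incidence argument showing each generator meets the $3$-space of a fixed \orsl\ $b$ in exactly one point of $[b]$, are sound.

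As written, however, the proposal has genuine gaps rather than merely suppressed routine detail. First, the heart of part~2 --- that the feet of the $q+1$ generators on $[T]$ form a nondegenerate conic, and that the resulting correspondence with the twisted cubic is a projectivity (your ``quadratic forms'' and ``cubic forms in $(a:b)$'') --- is exactly what the explicit computation must deliver, and you only assert that ``one checks'' it. Second, for (c) the two verifications you propose (that the transversal point of $[T]^*$ lies on the extension of $\bbc$ and that $g$ meets the extension of $\bbn$) establish only that $\bbc$ and $\bbn$ are special; the claim is that the extension of $\bbb$ contains the whole lines $g,g^\r,g^{\r^2}$, and for that you must additionally show that $g$ is a generator of the extended scroll, i.e.\ that the extended projectivity sends $g\cap[T]^*$ to $g\cap[L]^*$ so that the line joining them is $g$ itself; this does not follow from your two checks. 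Third, for (d) the ``analogues of the $2\times2$ minors'' cutting out the scroll joining a conic to a twisted cubic give ten quadrics, not nine, and over $\GF(q)$ a ``dimension count'' cannot certify that a chosen set of nine quadrics has common zero locus exactly the point set $\bbb$; the content of (d) is precisely writing down suitable forms and verifying that intersection. Finally, the converse in part~1 is only gestured at: ``comparing the sizes of the two sets'' requires actually producing both counts (or, more directly, verifying that the affine points of such a plane together with the \orsl\ carried by the regulus form a subplane secant to $\li$), and neither is supplied. (For the tangent-case transitivity you could simply have invoked Theorem~\ref{transitiveontangentsubplanes}.)
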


The converse of this second correspondence is also true.

\begin{theorem}{\rm \cite{barw13}}\Label{subplane-tangent-iff}
Let $\bbb$ be a ruled surface of $\PG(6,q)$ defined by a projectivity from a conic
directrix 
$\C$ to a twisted cubic directrix $\N$. 
Then $\bbb$ corresponds  to an \orsp\ of $\PG(2,q^3)$ if and only if
$\C$ is a special conic in
a spread element $\pi$, $\N$ is a special
twisted cubic in a 3-space about a spread element distinct from
$\pi$, and
in the cubic extension $\PG(6,\r^3)$ of $\PG(6,\r)$, $\bbb$ contains the transversals of the regular
spread $\S$. 
\end{theorem}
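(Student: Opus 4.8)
The forward implication is essentially Theorem~\ref{FFA-subplane-tangent} read backwards. Suppose $\bbb$ is the Bruck-Bose representation of an \orsp\ $\Bpi$. Since $\bbb$ carries a twisted cubic directrix it spans at least a $3$-space and has $(q+1)^2$ points over $\GF(q)$, so $\Bpi$ is neither secant to $\li$ (Theorem~\ref{FFA-subplane-tangent}(1) would make $\bbb$ a plane) nor disjoint from $\li$ (that would make $\bbb$ a set of only $q^2+q+1$ points); hence $\Bpi$ is tangent to $\li$. Parts~(a) and~(c) of Theorem~\ref{FFA-subplane-tangent}(2) then say exactly that $\C$ is a special conic lying in a spread element $\pi$, that $\N$ is a special twisted cubic lying in a $3$-space $\Sigma$ meeting $\si$ in a spread element distinct from $\pi$, and that in $\PG(6,q^3)$ the surface $\bbb$ contains the three transversals $g,g^q,g^{q^2}$ of $\S$.

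For the converse, assume $\bbb$, with defining projectivity $\psi\colon\C\to\N$, satisfies these three conditions, and work throughout in $\PG(6,q^3)$. The first step is to pin down how the transversals sit on $\bbb$. As $\C$ is special it meets $g$ in a point $C_g$, and since $\C$ is defined over $\GF(q)$ (hence fixed by the Frobenius collineation $x\mapsto x^q$) it meets $g^q$ and $g^{q^2}$ in the conjugate points $C_g^{\,q},C_g^{\,q^2}$, which are distinct as $C_g$ is not $\GF(q)$-rational; similarly $\N$ meets $g,g^q,g^{q^2}$ in a conjugate triple $N_g,N_g^{\,q},N_g^{\,q^2}$. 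Now $\pi$ and $\Sigma$ are disjoint (a plane and a $3$-space of $\PG(6,q)$ which between them span the whole space), so their extensions remain disjoint in $\PG(6,q^3)$; hence $\bbb$ is a smooth ruled surface, its only lines are its generators, and each generator --- being the join of a point of $\pi$ to a point of the disjoint $3$-space $\Sigma$ --- meets $\N\subseteq\Sigma$ in exactly one point. Since $g\subseteq\bbb$ passes through $C_g\in\C$, it is the unique generator through $C_g$, namely $\langle C_g,\psi(C_g)\rangle$, and its unique common point with $\N$ is on one hand $\psi(C_g)$ and on the other hand $N_g=\N\cap g$; thus $\psi(C_g)=N_g$, and applying Frobenius, $\psi(C_g^{\,q})=N_g^{\,q}$ and $\psi(C_g^{\,q^2})=N_g^{\,q^2}$. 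So $g,g^q,g^{q^2}$ are three conjugate generators of $\bbb$. Because a projectivity from the conic $\C$ to the twisted cubic $\N$ is, via their natural parametrisations by $\PG(1,q^3)$, an element of $\PGL(2,q^3)$ and hence determined by three point-images, $\psi$ is the \emph{unique} projectivity sending that triple on $\C$ to that triple on $\N$; in particular the ruled surface $\bbb$ is completely determined by the quadruple $(\pi,\C,\Sigma,\N)$.

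It remains to recognise $\bbb$ as a Bruck-Bose image. Let $X$ be the set of all ruled surfaces of $\PG(6,q)$ satisfying the hypotheses of the theorem, and let $G$ be the group of homographies of $\PG(6,q)$ induced by the subgroup of $\PGL(3,q^3)$ fixing $\li$. Then $G$ stabilises $\si$, the spread $\S$ and its transversals, it permutes $X$, it carries Bruck-Bose images of tangent \orsps\ to such images, and by Theorem~\ref{FFA-subplane-tangent}(2) at least one member of $X$ is a Bruck-Bose image of an \orsp. I would then prove $G$ transitive on $X$ by peeling off the data $(\pi,\C,\Sigma,\N)$ one layer at a time: $G$ is transitive on spread elements (being transitive on $\li$), so we may normalise $\pi$; its stabiliser is transitive on the special conics of $\pi$; with $\C$ fixed, the stabiliser is transitive on the $3$-spaces $\Sigma$ about another spread element that can carry an admissible $\N$; with $\Sigma$ fixed, it is transitive on the admissible special twisted cubics in $\Sigma$; and then $\psi$, hence $\bbb$, is forced by the previous paragraph. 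Transitivity of $G$ on $X$ together with the existence of one subplane image in $X$ then forces every member of $X$ to be a Bruck-Bose image of an \orsp, as required. The main obstacle is exactly this orbit analysis --- verifying that at each stage the relevant stabiliser still acts transitively on the special substructures in play (special conics in a spread element, then the admissible $3$-spaces $\Sigma$, then special twisted cubics in $\Sigma$). An alternative way to close the argument is by counting: with $\psi$ now pinned down, one computes $|X|$ from the number of special conics in a spread element and special twisted cubics in the admissible $3$-spaces, compares it with the known number of tangent \orsps\ of $\PG(2,q^3)$, and concludes using injectivity of the Bruck-Bose correspondence on affine points.
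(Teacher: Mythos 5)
First, note that the paper does not prove this statement at all: it is quoted verbatim from \cite{barw13}, so there is no in-paper argument to compare yours against, and your proposal has to stand on its own. Judged that way, the forward direction is acceptable in spirit (it is essentially Theorem~\ref{FFA-subplane-tangent}(2)(a),(c), once you rule out the secant and external cases by counting), although you silently assume that the given directrices $\C$ and $\N$ of the ruled surface are the ones produced by Theorem~\ref{FFA-subplane-tangent}, i.e.\ that the directrix decomposition of the point set $\bbb$ is intrinsic; that deserves a sentence. Your observation that containment of the transversals forces $\psi$ to map the transversal points of $\C^*$ to the corresponding transversal points of $\N^*$, so that $\bbb$ is uniquely determined by $(\pi,\C,\Sigma,\N)$, is correct and matches a remark the paper itself makes just before Theorem~\ref{nrctoconic}.

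The genuine gap is in the converse, which is the entire content of the characterisation in \cite{barw13}. Your argument reduces it to the claim that the group $G$ induced by the stabiliser of $\li$ in $\PGL(3,q^3)$ is transitive on the set $X$ of ruled surfaces satisfying the hypotheses, but the chain of stabiliser-transitivity assertions this rests on (transitivity on the special conics of a fixed spread element; then, with $\C$ fixed, on the admissible $3$-spaces $\Sigma$; then on the special twisted cubics in a fixed $\Sigma$) is only announced, and you yourself flag it as ``the main obstacle''. These steps are not routine: for instance, transitivity of the stabiliser of $(\pi,\C)$ on the pairs $(\Sigma,\N)$ is exactly the kind of statement that needs either an explicit group computation or a count, and the counting alternative you mention (comparing $|X|$, computed from the number of special conics and special twisted cubics via your uniqueness-of-$\psi$ observation, with the number of tangent \orsps) is likewise not carried out. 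Ingredients for parts of it do exist in the paper (e.g.\ the Singer cycle of Theorem~\ref{PGL7r} together with \cite[Lemma 2.6]{barw13} handles transitivity on special conics, in the spirit of Lemmas~\ref{singerorbits} and~\ref{countlimage}, and Theorem~\ref{transitiveontangentsubplanes} gives transitivity on the subplane images), but as written the converse is a plan rather than a proof: nothing in your text actually shows that a ruled surface satisfying the three conditions has its affine points forming an \orsp\ of $\PG(2,q^3)$.
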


\subsection[Collineations]{The collineation group in the Bruck-Bose
  representation}

Consider a homography $\alpha\in\PGL(3,\r^3)$ acting on $\PG(2,\r^3)$ that
fixes $\li$ as a set of points. There is a corresponding homography
$[\alpha]\in\PGL(7,\r)$ acting on the Bruck Bose representation of
$\PG(2,\r^3)$ as $\PG(6,\r)$. Note that $[\alpha]$ fixes the hyperplane
$\si$ at infinity of $\PG(6,\r)$, and permutes the elements of the
spread $\S$ in  $\si$.  Consequently subgroups of $\PGL(3,\r^3)$ fixing $\li$ correspond to subgroups of $\PGL(7,\r)$ fixing $\si$ and permuting the spread elements in $\si$.
We will need some properties of these collineations groups, which we
present here.

We first consider the simpler  Bruck-Bose representation of $\PG(1,\r^3)$
in $\PG(3,\r)$. A homography $\alpha$ of $\PG(1,\r^3)$ is an element of  $\PGL(2,\r^3)$ and
can be represented by a $2\times 2$ matrix
$$A=\begin{pmatrix}a&b\\c&d\end{pmatrix},\quad a,b,c,d\in\GF(\r^3),$$ where
$|A|\neq0$. If $\alpha$ fixes the point $(1,0)$, then we can set $c=0,\
d=1$. 
To convert this to a homography $[\alpha]$ in $\PGL(4,q)$ that acts on $\PG(3,q)$,
write $a=a_0+a_1\tau+a_2\tau^2$,
 $b=b_0+b_1\tau+b_2\tau^2$, for $a_i,b_i\in\GF(q)$, and let
 $\sigma(a)=(a_0,a_1,a_2)$, $\sigma(b)=(b_0,b_1,b_2)$.
Let 
\begin{equation}\label{matrixT}
T=\begin{pmatrix}0&0&t_0\\1&0&t_1\\0&1&t_2\end{pmatrix},
\end{equation}
then $[\alpha]$ has matrix 
\renewcommand{\arraystretch}{1.2}
\[
[A]=\left(\begin{array}{ccc|c}
\sigma(a)^t&T\sigma(a)^t&T^2\sigma(a)^t&\sigma(b)^t\\
\hline
0&0&0&1\end{array}\right).
\]
\renewcommand{\arraystretch}{1}

 Note that $[A]$ fixes the plane $\pi_\infty$ of $\PG(3,q)$ with homogeneous coordinates
 $[0,0,0,1]$. Further, the group
 $\langle[T]\rangle$ is a Singer group acting on the points of $\pi_\infty$
 that is regular on the points, and on the lines of $\pi_\infty$ (see
 \cite[Chapter 4]{rey} for more information on Singer cycles).

As a homography of $\PG(1,q^3)$ that fixes the infinite point $(1,0)$ gives rise to a
homography of $\PG(3,q)$ that fixes the infinite plane $[0,0,0,1]$, the
following result is immediate.

\begin{theorem}\Label{PGL2r3}
Let $E$ be the subgroup of $\PGL(2,\r^3)$ fixing the infinite point $(1,0)$
of $\PG(1,q^3)$.
Consider the Bruck-Bose representation of $\PG(1,q^3)$ in $\PG(3,q)$ with
plane at infinity $\pi_\infty=[0,0,0,1]$.
Let $[E]$ be the subgroup of $\PGL(4,\r)$ corresponding to $E$, so $[E]$
acts on $\PG(3,q)$ and  fixes the plane $\pi_\infty$. Then
\begin{enumerate}
\item $|[E]|=\r^3(\r^3-1)$.
\item $E$ is transitive on the affine points of $\PG(1,\r^3)$, and so $[E]$ is transitive on the affine points of $\PG(3,\r)$.
\item $E$ is transitive on the $\r^3(\r^3-1)$ \orsls\ of $\PG(1,\r^3)$ not
  containing $(1,0)$, and so $[E]$ is transitive on the special twisted cubics of $\PG(3,q)$.
\end{enumerate}
\end{theorem}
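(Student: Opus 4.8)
The plan is to prove the three parts of Theorem~\ref{PGL2r3} in turn, exploiting the explicit matrix description of $[E]$ given just above the statement. For part~(1), I would observe that a homography $\alpha\in E$ fixing $(1,0)$ is represented by a matrix $A=\begin{pmatrix}a&b\\0&1\end{pmatrix}$ with $a\in\GF(\r^3)^\ast$ and $b\in\GF(\r^3)$ arbitrary, two such matrices giving the same homography only if they are equal (the normalisation $c=0$, $d=1$ has already removed the scalar freedom). Hence $|E|=(\r^3-1)\r^3$, and since $\alpha\mapsto[\alpha]$ is an isomorphism onto $[E]$ (distinct homographies of $\PG(1,\r^3)$ fixing $(1,0)$ give distinct homographies of $\PG(3,\r)$ fixing $\pi_\infty$, as the first acts on the spread/affine structure that determines the second), we get $|[E]|=\r^3(\r^3-1)$.

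For part~(2): the affine points of $\PG(1,\r^3)$ are the points $(\beta,1)$, $\beta\in\GF(\r^3)$, and the map $x\mapsto ax+b$ with $a\ne 0$ clearly acts transitively (indeed sharply $2$-transitively) on them; the subgroup of translations $x\mapsto x+b$ already suffices for transitivity. Under the Bruck--Bose correspondence $\sigma$, affine points of $\PG(1,\r^3)$ correspond bijectively to affine points of $\PG(3,\r)$, and $[E]$ acts on the latter compatibly with the action of $E$ on the former, so transitivity transfers. For part~(3): by Theorem~\ref{sublinesinBB}(\ref{FFA-orsl}) applied in the $\PG(1,\r^3)$/$\PG(3,\r)$ setting, the \orsls\ of $\PG(1,\r^3)$ not through $(1,0)$ — equivalently, the \orsls\ disjoint from the point at infinity — correspond exactly to the special twisted cubics of $\PG(3,\r)$. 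So it suffices to show $E$ is transitive on these \orsls. Every \orsl\ not containing $(1,0)$ is the image under some $\alpha\in\PGL(2,\r^3)$ of the standard subline $\PG(1,\r)=\{(\beta,1):\beta\in\GF(\r)\}\cup\{(1,0)\}$, chosen so that $(1,0)$ lands off it; composing with a translation I may assume the image \orsl\ is $\{(\beta,1):\beta\in\GF(\r)\}\cup\{(c,1)\}$ for some $c\notin\GF(\r)$ — but in fact any \orsl\ not through $(1,0)$ contains $\r+1$ affine points forming a $\PG(1,\r)$-substructure inside the affine line $\GF(\r^3)$, i.e.\ a coset $u\GF(\r)+v$ is not general enough; rather it is the affine part of $\{\,(x,1): x \in \ell\,\}$ where $\ell$ ranges over the $\r$-dimensional $\GF(\r)$-subspaces' projective analogues. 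The cleanest route is a counting argument: $E$ acts on the set of $\r^3(\r^3-1)$ such \orsls, and I would show each orbit has full size by checking the stabiliser in $E$ of the standard subline $\{(\beta,1):\beta\in\GF(\r)\}$ (extended appropriately) is trivial — a map $x\mapsto ax+b$ fixing this subline setwise must have $a\in\GF(\r)^\ast$, $b\in\GF(\r)$, but then it does not fix $(1,0)$ unless it is a genuine $\PG(1,\r)$-homography, which lies outside $E$ except for the identity; combined with $|E|=\r^3(\r^3-1)$ this forces a single orbit.

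The main obstacle I anticipate is part~(3): pinning down exactly which \orsls\ "not containing $(1,0)$" means in coordinates, and verifying that there are precisely $\r^3(\r^3-1)$ of them with trivial $E$-stabiliser. One must be careful that an \orsl\ disjoint from the affine point $(1,0)$ is \emph{not} a $\GF(\r)$-coset of a $1$-dimensional $\GF(\r)$-subspace of $\GF(\r^3)$, but rather the affine trace of a $\PGL(2,\r^3)$-image of $\PG(1,\r)$; the number $\r^3(\r^3-1)$ and the orbit-stabiliser bookkeeping must be reconciled with the structure of the Singer-type action of $\langle[T]\rangle$ on $\pi_\infty$ noted before the theorem. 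Once the stabiliser is shown trivial (equivalently, once $E$ is shown to act freely on this set of \orsls), transitivity is immediate from $|E|=\r^3(\r^3-1)$, and then Theorem~\ref{sublinesinBB}(\ref{FFA-orsl}) delivers transitivity of $[E]$ on special twisted cubics of $\PG(3,\r)$. $\square$
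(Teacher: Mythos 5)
Your parts (1) and (2) are correct and are exactly the standard facts the paper treats as immediate: $E$ consists of the maps $x\mapsto ax+b$, $a\in\GF(q^3)^\ast$, $b\in\GF(q^3)$, so $|E|=q^3(q^3-1)$, and the translations already give transitivity on affine points; the transfer to $[E]$ via the Bruck--Bose correspondence is as you say. (The paper itself offers no further argument -- it states the theorem as an immediate consequence of the correspondence between homographies fixing $(1,0)$ and homographies of $\PG(3,q)$ fixing $\pi_\infty$.)

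Part (3), however, has a genuine gap in the step you yourself flag as the crux. Your proposed stabiliser computation is carried out for the wrong object and reaches a false conclusion for it: the ``standard subline'' $\{(\beta,1):\beta\in\GF(q)\}$, completed to an \orsl, is $\{(\beta,1):\beta\in\GF(q)\}\cup\{(1,0)\}$, which \emph{contains} $(1,0)$ and so is not one of the \orsls\ being acted on; moreover its stabiliser in $E$ is not trivial. Indeed every map $x\mapsto ax+b$ with $a\in\GF(q)^\ast$, $b\in\GF(q)$ lies in $E$ (every element of $E$ fixes $(1,0)$ by definition -- your clause ``it does not fix $(1,0)$'' is the error) and stabilises this subline, giving a stabiliser of order $q(q-1)$, consistent with $E$ being transitive on the $q^2(q^2+q+1)$ \orsls\ \emph{through} $(1,0)$, not with freeness. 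The argument can be repaired as follows: first check by a short count that the number of \orsls\ of $\PG(1,q^3)$ not containing $(1,0)$ is indeed $q^3(q^3-1)$ (there are $q^2(q^4+q^2+1)$ \orsls\ in all and $q^2(q^2+q+1)$ through any fixed point). Then take an \orsl\ $b$ disjoint from $(1,0)$; its full stabiliser in $\PGL(2,q^3)$ is a conjugate of $\PGL(2,q)$, and in coordinates where $b$ is the standard subline the point $(1,0)$ becomes a point $x\in\GF(q^3)\setminus\GF(q)$. A non-identity element of $\PGL(2,q)$ fixing $x$ must also fix $x^q$ and $x^{q^2}$ (its coefficients lie in $\GF(q)$), i.e.\ three distinct points of $\PG(1,q^3)$, hence is the identity. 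So $\mathrm{Stab}_E(b)$ is trivial, every $E$-orbit on these \orsls\ has size $q^3(q^3-1)$, and since this equals the total number there is a single orbit. With that replacement, your appeal to the subline/twisted-cubic correspondence to transfer transitivity to $[E]$ acting on special twisted cubics of $\PG(3,q)$ is fine.
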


We now consider the Bruck-Bose representation of $\PG(2,\r^3)$
in $\PG(6,q)$.
Let $\alpha\in\PGL(3,q^3)$ be a homography of $\PG(2,\r^3)$ fixing the line $\li$ of
$\PG(2,\r^3)$.  Then $\alpha$ has matrix
$$A=\begin{pmatrix}a&b&c\\d&e&f\\0&0&1\end{pmatrix}, \quad
a,b,c,d,e,f\in\GF(q^3),$$ with $|A|\neq0$. The corresponding homography $[\alpha]\in\PGL(7,\r)$ of $\PG(6,\r)$ has a $7\times7$ matrix
\renewcommand{\arraystretch}{1.2}
\[
[A]=\left(\begin{array}{ccc|ccc|c}
\sigma(a)^t&T\sigma(a)^t&T^2\sigma(a)^t&\sigma(b)^t&T\sigma(b)^t&T^2\sigma(b)^t&\sigma(c)^t\\
\hline
\sigma(d)^t&T\sigma(d)^t&T^2\sigma(d)^t&\sigma(e)^t&T\sigma(e)^t&T^2\sigma(e)^t&\sigma(f)^t\\
\hline
0&0&0&0&0&0&1\end{array}\right)
\]
with $T$ as in (\ref{matrixT}). It is straightforward to prove the following
result that $T$ gives rise to a Singer cycle in each spread element in $\PG(6,q)$. 

\renewcommand{\arraystretch}{1}
\begin{theorem}\Label{PGL7r}
Consider  the homography $\CapitalTheta\in\PGL(7,\r)$ with $7\times 7$ matrix
$$M=\begin{pmatrix}T&0&0\\0&T&0\\0&0&1\end{pmatrix}, \quad{\rm where}\ \ 
T=\begin{pmatrix}0&0&t_0\\1&0&t_1\\0&1&t_2\end{pmatrix}.$$
%$, where $T$ is given in {\rm (\ref{matrixT})}. 
Then in $\PG(6,q)$, $\CapitalTheta$ fixes each plane of the spread $\S$, and $\langle\CapitalTheta\rangle$
acts regularly on the set of points, and on the set of lines, of each spread element.
\end{theorem}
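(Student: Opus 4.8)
The plan is to verify the claim by direct computation with the $3\times 3$ block $T$, using the fact that $T$ is the companion matrix of the primitive polynomial $x^3-t_2x^2-t_1x-t_0$ of $\tau$ over $\GF(q)$. First I would observe that each spread element of $\S$ is one of the two ``coordinate'' planes $x=0$ or $y=0$ together with the $q^3-1$ other spread elements, and that $M$ acts on the point $(x_0,x_1,x_2,y_0,y_1,y_2,z)$ by sending the $x$-block to $T(x_0,x_1,x_2)^t$, the $y$-block to $T(y_0,y_1,y_2)^t$, and fixing $z$. Since $M$ fixes $\si$ (the hyperplane $z=0$) and its action on $\si$ is block-diagonal in the $x$- and $y$-coordinates with the \emph{same} matrix $T$, it suffices to understand the action of $T$ on $\PG(2,q)$. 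Identifying $\PG(2,q)$ with the nonzero classes of $\GF(q^3)$ via $(a_0,a_1,a_2)\leftrightarrow a_0+a_1\tau+a_2\tau^2$, the matrix $T$ is precisely multiplication by $\tau$: this is immediate from $T(0,0,1)^t=(t_0,t_1,t_2)^t$, matching $\tau\cdot\tau^2=\tau^3=t_2\tau^2+t_1\tau+t_0$, and $T(1,0,0)^t=(0,1,0)^t$, $T(0,1,0)^t=(0,0,1)^t$ matching $\tau\cdot 1=\tau$, $\tau\cdot\tau=\tau^2$.

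Next I would use the standard fact (see \cite[Chapter 4]{rey}) that multiplication by a primitive element $\tau$ of $\GF(q^3)$ induces a Singer cycle on $\PG(2,q)$: the cyclic group $\langle T\rangle$ has order $(q^3-1)/(q-1)=q^2+q+1$ acting on the points of $\PG(2,q)$, and it is sharply transitive on those $q^2+q+1$ points (orbit--stabiliser, since $T^k$ fixes a point iff $\tau^k$ lies in $\GF(q)$ iff $(q^2+q+1)\mid k$). The induced action on lines of $\PG(2,q)$ is likewise regular, because the dual plane is coordinatised the same way and $\langle T\rangle$ (equivalently its inverse-transpose, which has the same cycle structure) permutes the $q^2+q+1$ lines in a single orbit. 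This establishes that on each of the two coordinate spread elements ($x$-plane and $y$-plane), $\Theta$ restricts to a Singer cycle acting regularly on points and on lines.

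It then remains to handle the remaining $q^3-1$ spread elements of $\S$. Here I would recall from Section~\ref{BBintro} that the spread element $[T']$ corresponding to an infinite point $T'=(\alpha,\beta,0)$ of $\li$ is $\{\sigma(\lambda\alpha,\lambda\beta,0):\lambda\in\GF(q^3)^\ast\}\cup\{\cdot\}$, i.e.\ the point set $\{(\lambda\alpha \bmod T\text{-coords},\ \lambda\beta \bmod T\text{-coords})\}$; more concretely, writing $\alpha=a_0+a_1\tau+a_2\tau^2$, the points of $[T']$ in $\PG(6,q)$ are the $q^2+q+1$ projective points $(\sigma(\lambda\alpha),\sigma(\lambda\beta))$. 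Applying $\Theta$ multiplies both $\sigma(\lambda\alpha)$ and $\sigma(\lambda\beta)$ by $T$, i.e.\ replaces $\lambda$ by $\tau\lambda$; since $\tau$ is primitive, $\lambda\mapsto\tau\lambda$ is a single cycle of length $q^2+q+1$ on the projective points of $[T']$, so $\langle\Theta\rangle$ is regular on the points of $[T']$. For lines of $[T']$: a line of the plane $[T']$ is cut out by a $\GF(q)$-linear condition on the $\GF(q^3)$-coordinate $\lambda$, and multiplication by $\tau$ permutes these conditions exactly as in the Singer action on the dual of $\PG(2,q)$, hence regularly. Finally, that $\Theta$ fixes \emph{each} spread element setwise is clear once we note $T$ commutes with scalar multiplication, so $[T']^\Theta=[T']$ for every $T'$; combined with $M$ fixing $\si$, this completes the proof.

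The main obstacle is bookkeeping rather than depth: one must be careful that the ``same'' matrix $T$ acts in both the $x$- and $y$-blocks so that the action descends coherently to each spread element (which mixes the two blocks), and that the Singer property is invoked correctly for \emph{lines} as well as points — the cleanest route is to pass to the projective line $\PG(1,q^3)$ picture of a single spread element and quote Theorem~\ref{PGL2r3}(3) / the Singer-cycle facts from \cite[Chapter 4]{rey} rather than re-derive the line-transitivity by hand. $\square$
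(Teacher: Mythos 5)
Your proposal is correct and is exactly the argument the paper has in mind: the paper states this result without a written proof (calling it straightforward, after noting that $\langle[T]\rangle$ is a Singer group on $\pi_\infty$ in the $\PG(3,q)$ setting), and your verification that $T$ is multiplication by $\tau$ in the basis $\{1,\tau,\tau^2\}$, so that on each spread element $\{\sigma(\lambda\alpha,\lambda\beta,0)\st\lambda\in\GF(q^3)^*\}$ the map $\CapitalTheta$ acts as $\lambda\mapsto\tau\lambda$, i.e.\ as a Singer cycle regular on points and (by the dual/fixed-point argument) on lines, is precisely the intended route. The only cosmetic point is that regularity should be read for the permutation group induced on each spread element (the kernel of order $q-1$ coming from $\GF(q)$-scalar powers of $\tau$ acts trivially), which is the same convention the paper uses for $\langle[T]\rangle$ on $\pi_\infty$.
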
 

\section{Background results on tangent splashes}\Label{coord-subplane}

In \cite{barwtgt1}, a number of group theoretic properties of tangent
splashes are proved, the most useful is the following.

\begin{theorem} \Label{transitiveontangentsubplanes}
The subgroup of $\PGL(3,q^3)$ acting on $\PG(2,q^3)$ and fixing the line
$\li$  is transitive on all the \orsps\  tangent to $\li$, and transitive on the tangent splashes of $\li$.
\end{theorem}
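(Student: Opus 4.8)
The plan is to reduce everything to a transitivity statement about $\PGL(3,q)$ acting on the lines of $\PG(2,q^3)$ that are tangent to the canonical subplane $\PG(2,q)$, working throughout with homographies only. First note that the second assertion follows from the first: if $g\in\PGL(3,q^3)$ fixes $\li$ and maps one \orsp\ tangent to $\li$ onto another, then, since $g$ permutes lines and fixes $\li$, it maps the set of points of $\li$ lying on a line of the first subplane onto the corresponding set for the second, i.e.\ it maps one tangent splash onto the other; as every tangent splash of $\li$ is, by definition, the tangent splash of some \orsp\ tangent to $\li$, transitivity on these subplanes yields transitivity on their tangent splashes. So it suffices to show that the stabiliser $G$ of $\li$ in $\PGL(3,q^3)$ is transitive on the \orsps\ tangent to $\li$.

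To do this I would fix a reference configuration. Since every \orsp\ is the image of $\PG(2,q)$ under $\PGL(3,q^3)$, and $\PG(2,q)$ possesses tangent lines in $\PG(2,q^3)$ (through any point $P\in\PG(2,q)$ there are $q+1$ sublines of $\PG(2,q)$ and hence $q^3-q$ lines of $\PG(2,q^3)$ meeting $\PG(2,q)$ only in $P$), pick such a tangent line $m_0$ of $\PG(2,q)$ and an $h_0\in\PGL(3,q^3)$ with $h_0(m_0)=\li$, and set $\Bpi_0:=h_0(\PG(2,q))$, an \orsp\ tangent to $\li$. Now let $\Bpi$ be an arbitrary \orsp\ tangent to $\li$ and write $\Bpi=k(\PG(2,q))$ with $k\in\PGL(3,q^3)$; then $k^{-1}(\li)$ meets $\PG(2,q)$ in a single point, i.e.\ it is tangent to $\PG(2,q)$. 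Granting the crux claim below --- that the stabiliser of $\PG(2,q)$ in $\PGL(3,q^3)$, which contains $\PGL(3,q)$, is transitive on the lines tangent to $\PG(2,q)$ --- there is $\varphi\in\PGL(3,q)$ with $\varphi(m_0)=k^{-1}(\li)$. Then $g:=k\varphi h_0^{-1}\in\PGL(3,q^3)$ fixes $\li$, since $g(\li)=k\varphi(m_0)=k(k^{-1}(\li))=\li$, and $g(\Bpi_0)=k\varphi(\PG(2,q))=k(\PG(2,q))=\Bpi$, so $g\in G$ carries $\Bpi_0$ to $\Bpi$, as required.

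It remains to prove the crux claim, and this is where the real work lies. Since $\PGL(3,q)$ is transitive on the points of $\PG(2,q)$, it is enough to show that the stabiliser of a point $P\in\PG(2,q)$ is transitive on the $q^3-q$ lines through $P$ tangent to $\PG(2,q)$. The pencil of lines of $\PG(2,q^3)$ through $P$ is a $\PG(1,q^3)$ in which the sublines of $\PG(2,q)$ through $P$ form a $\PG(1,q)$, and the tangent lines through $P$ are precisely the $q^3-q$ points of $\PG(1,q^3)\setminus\PG(1,q)$. The stabiliser of $P$ in $\PGL(3,q)$ acts on this pencil with kernel the subgroup that fixes every line through $P$ (a quick matrix check shows such elements act trivially on the whole $\PG(1,q^3)$-pencil, not just the $\PG(1,q)$ one), so the induced action is faithful, of order $q^3-q$, and realises $\PGL(2,q)$ inside $\PGL(2,q^3)$ in the standard way. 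I would then check that this action of $\PGL(2,q)$ on $\PG(1,q^3)\setminus\PG(1,q)$ is free: writing $\PG(1,q^3)=\GF(q^3)\cup\{\infty\}$, a fixed point $\theta\neq\infty$ of $z\mapsto(az+b)/(cz+d)$ with $a,b,c,d\in\GF(q)$ satisfies $c\theta^2+(d-a)\theta-b=0$, and since $\theta\in\GF(q^3)\setminus\GF(q)$ has degree $3$ over $\GF(q)$ this $\GF(q)$-polynomial of degree at most $2$ must be identically zero, forcing $c=b=0$ and $a=d$, i.e.\ the identity. Thus $\PGL(2,q)$ acts freely on a set of $q^3-q=|\PGL(2,q)|$ points, hence regularly, hence transitively, which proves the crux claim and completes the proof. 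The main obstacle is exactly this last part --- correctly identifying the action on the pencil and verifying its regularity; the rest is bookkeeping with homographies.
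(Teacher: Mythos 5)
Your proof is correct; note, however, that the paper does not prove this statement itself --- it is quoted as a result from the companion paper \cite{barwtgt1} --- so there is no internal proof to compare your route against. Judged on its own terms, your argument is sound. The reduction of the splash assertion to the subplane assertion is immediate, and the composition $g=k\varphi h_0^{-1}$ correctly reduces transitivity of the stabiliser of $\li$ on tangent \orsps\ to transitivity of the stabiliser of $\PG(2,q)$ (indeed just $\PGL(3,q)$) on the lines of $\PG(2,q^3)$ tangent to $\PG(2,q)$. Your treatment of the crux claim is also right: the pencil at a point $P\in\PG(2,q)$ is a $\PG(1,q^3)$ in which the extended subplane lines form a $\PG(1,q)$; the kernel of the pencil action is the group of perspectivities with centre $P$, which fix the extended pencil as well (or, as you may prefer, any induced homography of $\PG(1,q^3)$ fixing the $q+1\ge 3$ points of the subline is the identity); the induced group is the standard copy of $\PGL(2,q)$ in $\PGL(2,q^3)$ of order $q^3-q$; and the degree-three argument (a nonzero $\GF(q)$-polynomial of degree at most $2$ cannot vanish at $\theta\in\GF(q^3)\setminus\GF(q)$) shows this copy acts freely, hence regularly, on the $q^3-q$ tangent lines through $P$. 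The two steps you leave as assertions --- the ``quick matrix check'' on the kernel and the order count for the induced group --- are routine and correct as stated, so the argument stands as a complete and self-contained proof of the quoted theorem.
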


This means that if we want to prove a result about tangent \orsps\ 
or tangent splashes,
then we can without loss of generality prove it for a particular tangent
subplane or tangent splash. We use the following tangent \orsp\ $\Bpi$ that is coordinatised
in full detail in \cite[Section~\ref{TS-coord-subplane}]{barwtgt1}. 
The labelling we use for the points and lines of $\Bpi$ are illustrated in Figure~\ref{fig-subscripts},
and their coordinates are given in Table~\ref{table-coords}. The tangent
splash of $\Bpi$ is $$\ST=\{(a+b\tau,1,0)\st a,b\in\GF(q)\}\cup\{(1,0,0)\}.$$

\begin{figure}[h]
\centering
\input{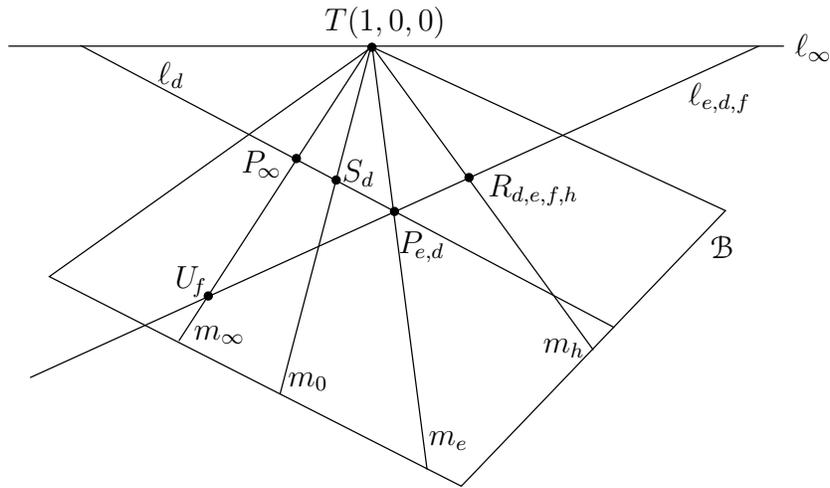}
\caption{Tangent subplane notation}\label{fig-subscripts}
\end{figure}

\begin{table}[ht]
\caption{Coordinates of points in the \orsp\ $\Bpi$, ($e,d,f,h\in\GF(q)$)}\label{table-coords}
\begin{center}
\begin{tabular}{|c|c|c|}
\hline
Notation&Coordinates&Description\\
\hline
$T$&$(1,0,0)$&$\Bpi\cap\li$\\
$P_\infty$&$(1,1,1)$&\\
$m_e$&$[0,e+\tau,-e]$&lines of $\Bpi$ through $T$\\
$m_\infty$&$[0,1,-1]$&$TP_\infty$\\
$S_d$&$(d,0,1)$&points of $\Bpi$ on $m_0$\\
$U_f$&$(1+f\tau,1,1)$&points of $\Bpi$ on $m_\infty$\\
$\ell_d$&$[1,d-1,-d]$&$P_\infty S_d$\\
$P_{e,d}$&$(e+d\tau,e,e+\tau)$&$m_e\cap \ell_d$\\
$\ell_{e,d,f}$&$[-1,ef-d+1+f\tau,d-ef]$&$P_{e,d}U_f$\\
$R_{e,d,f,h}$&$(h+(fh-fe+d)\tau,h,h+\tau)$& $\ell_{e,d,f}\cap m_h$\\
$R_{e,d,f,\infty}$&$(1+f\tau,1,1)$&$\ell_{e,d,f}\cap m_\infty=U_f$\\
\hline
\end{tabular}
\end{center}
\end{table}

If we look 
 at the points of a  tangent splash $\ST$ in the
Bruck-Bose representation in $\PG(6,q)$, we have a 
set $[\ST]$ of $q^2+1$ planes of the spread $\S$ in $\si\cong\PG(5,q)$. In
\cite{barwtgt1}, it is shown that there is an interesting set of cover planes in
$\PG(5,q)$ meeting every element of the tangent splash and contained entirely
within the tangent splash. These cover planes will be useful in our construction later. 

\begin{theorem}\Label{coverplanes}
Let $\ST$ be a tangent splash of $\li$ with centre $T$, and let $[\ST]$ be
the corresponding set of planes in $\si$ in the Bruck-Bose representation in
 $\PG(6,q)$. 
There
are exactly $q^2+q+1$ planes of $\si\cong\PG(5,q)$ that meet every plane of
$[\ST]$, called {\bf cover planes}. These cover planes
each meet the centre $[T]$ in distinct  lines, and meet every other plane of $[\ST]$ in 
distinct points, and hence are contained entirely within the splash. 
\end{theorem}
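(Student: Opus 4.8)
The plan is to exploit the transitivity result (Theorem~\ref{transitiveontangentsubplanes}) to reduce to the explicit tangent splash $\ST=\{(a+b\tau,1,0)\st a,b\in\GF(q)\}\cup\{(1,0,0)\}$ coordinatised in Section~\ref{coord-subplane}, and then to compute the spread elements $[\ST]$ directly using the Bruck-Bose map $\sigma$ and the transversal coordinates of Lemma~\ref{transversaleqn}. Concretely, for each point $(a+b\tau,1,0)$ of $\ST$ the spread element is obtained as the $\PG(5,q)$-part of the plane $\langle P,P^q,P^{q^2}\rangle$ where $P$ is a point of the transversal $g$ scaled appropriately; since $g$ and its conjugates are known explicitly, one gets homogeneous coordinates for each plane of $[\ST]$ as a span of three vectors in $\GF(q)^6$. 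I would then write down a candidate family of $q^2+q+1$ planes in $\si$, parametrised by the points of a $\PG(2,q)$, and verify directly that each candidate meets every plane of $[\ST]$; counting shows this family must be exactly the set of cover planes.

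The natural source of such a family is a \emph{cover plane} construction using the transversal lines: the transversals $g,g^q,g^{q^2}$ each carry a subline structure, and a point on $g$ (together with its conjugates) spans a cover plane. More precisely, I expect each cover plane to arise as $\langle Q,Q^q,Q^{q^2}\rangle\cap\si$ for $Q$ ranging over a suitable $\PG(2,q)$-subconfiguration related to the transversal of the \emph{subplane} ruled surface rather than of the whole spread — this is exactly the sort of ``Baer-type'' substructure that produces $q^2+q+1$ objects. Having produced the family, the three assertions about intersections are then checked: (i) each cover plane meets the centre $[T]$ in a line — here $[T]$ is the spread element through $T=(1,0,0)$, whose coordinates are immediate, and one computes the intersection of the cover plane's $3$-dimensional vector space with $[T]$'s; (ii) distinct cover planes meet $[T]$ in distinct lines, and meet each non-central splash plane in distinct points — this follows from a rank/dimension count once the parametrisation is in hand; (iii) consequently each cover plane has all $q^2+1$ of its points (it is a plane, so $q^2+q+1$ points — wait, it meets $[T]$ in a line ($q+1$ points) and each of the $q^2$ other splash planes in one point, giving $q+1+q^2=q^2+q+1$ points, i.e.\ the whole plane) inside $\bigcup[\ST]$, so it is contained in the splash.

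For the exact count of $q^2+q+1$, I would argue both bounds: the construction produces at least this many planes, and conversely any plane $\pi\subseteq\si$ meeting all $q^2+1$ planes of $[\ST]$ must, by the intersection pattern just described, be determined by its line of intersection with $[T]$ (a Singer-type argument using Theorem~\ref{PGL7r}, since $\langle\CapitalTheta\rangle$ acts regularly on the lines of $[T]$ and permutes $[\ST]$), forcing at most $q^2+q+1$ such planes. The main obstacle I anticipate is establishing the \emph{uniqueness/rigidity} in part (ii)–(iii): showing that a cover plane cannot meet some splash plane in more than a point (which would be automatic if one first shows two distinct spread elements can share at most a point — true — but one must also rule out a cover plane lying partly outside the splash), and pinning down that the correspondence (cover plane) $\leftrightarrow$ (line of $[T]$) is a bijection. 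I would handle this by the explicit coordinate computation, using that the action of $\langle\CapitalTheta\rangle$ is regular on the lines of $[T]$ to transport one cover plane to all of them, and then checking the single base case by hand.
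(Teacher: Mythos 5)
You should first note that the paper you are writing into does not prove this statement at all: Theorem~\ref{coverplanes} is quoted as background from \cite{barwtgt1}, so your attempt can only be judged on its own merits. The lower-bound half of your plan is sound and easily made concrete: for the coordinatised splash $\ST=\{(a+b\tau,1,0)\st a,b\in\GF(q)\}\cup\{(1,0,0)\}$ one may take, for $\mu\in\GF(q^3)^*$, the plane $\pi_\mu=\langle\sigma(\mu,0,0),\sigma(\mu\tau,0,0),\sigma(0,\mu,0)\rangle$; its points are $\sigma(\mu(\alpha+\beta\tau),\gamma\mu,0)$, $\alpha,\beta,\gamma\in\GF(q)$, so it meets $[T]$ in the line corresponding to $\mu\langle1,\tau\rangle$ and each $[(a+b\tau,1,0)]$ in the single point $\sigma(\mu(a+b\tau),\mu,0)$, and distinct classes $\mu\,\GF(q)^*$ give distinct planes, yielding $(q^3-1)/(q-1)=q^2+q+1$ cover planes. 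Your own description of the family (spans $\langle Q,Q^q,Q^{q^2}\rangle\cap\si$ attached to ``the transversal of the subplane ruled surface'') is off target, since the cover planes depend only on the splash and not on any \orsp\ having that splash; but this part of the proposal is repairable.

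The genuine gap is the word \emph{exactly}. Your upper-bound argument is circular: you argue that any plane meeting every element of $[\ST]$ is ``determined by its line of intersection with $[T]$'', but nothing in your proposal shows that such a plane meets $[T]$ in a line in the first place. Pure counting does not force it, because a plane has $q^2+q+1$ points while the splash has only $q^2+1$ elements: a priori a plane could meet every splash element in exactly one point (using $q^2+1$ of its points and leaving $q$ points outside the splash), or meet $[T]$ in a point and one non-centre element in a line. Excluding these configurations is the substantive step; for instance, writing points of $\si$ as pairs $(x,y)\in\GF(q^3)^2$, both exclusions come down to the fact that $\{1/y \st y\in Y\setminus\{0\}\}$, for $Y$ a $2$-dimensional $\GF(q)$-subspace, is projectively a nondegenerate conic of $\PG(2,q)$ (since $1/y\equiv y^{q}y^{q^2}$) and hence cannot lie in a line. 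The Singer regularity of $\langle\Theta\rangle$ on the lines of $[T]$ that you invoke gives neither this exclusion nor the further fact you need, namely that through a given line of $[T]$ there is at most one plane meeting all splash elements (a short slope argument, again needing the subspace $\langle1,\tau\rangle$ explicitly); it only transports cover planes to cover planes. As it stands your proposal establishes ``at least $q^2+q+1$'' and the containment statement for the constructed planes, but not the exact count.
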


We will also need the following result about cover planes. 

\begin{theorem}\Label{splash-coversets-T5} Let $[\ST]$ be a tangent splash in the Bruck-Bose representation of
$\PG(2,q^3)$ in $\PG(6,q)$. Let $\pi$ be a  plane which meets the centre $[T]$ in a
  line, and meets
  three further elements $[U],[V],[W]$ of $[\ST]$, where
  $[T],[U],[V],[W]$ are not in a common $2$-regulus. Then 
  $\pi$ is a cover plane of $[\ST]$.
\end{theorem}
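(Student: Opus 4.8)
The plan is to use the transitivity of Theorem~\ref{transitiveontangentsubplanes} together with the explicit count in Theorem~\ref{coverplanes} to reduce the statement to a counting/uniqueness argument. By Theorem~\ref{transitiveontangentsubplanes} we may assume $[\ST]$ is the tangent splash of the coordinatised subplane $\Bpi$ of Table~\ref{table-coords}, so $[T]$ is a fixed spread element and the cover planes of $[\ST]$ form a known set of $q^2+q+1$ planes (Theorem~\ref{coverplanes}). Each cover plane meets $[T]$ in a line, and the $q^2+q+1$ cover planes meet $[T]$ in \emph{distinct} lines; since $[T]\cong\PG(2,q)$ has exactly $q^2+q+1$ lines, every line of $[T]$ is the trace of exactly one cover plane. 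Hence the plane $\pi$ of the statement, which meets $[T]$ in some line $\ell$, shares that line with a unique cover plane $\pi_0$. The goal is then to show $\pi=\pi_0$.

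First I would argue that $\pi$ and $\pi_0$ already agree on much more than the line $\ell$. The cover plane $\pi_0$ is contained entirely within the splash and meets every element of $[\ST]$ other than $[T]$ in a single point. In particular $\pi_0$ meets each of $[U],[V],[W]$ in a point. Now I would invoke the 2-regulus structure: the hypothesis says $[T],[U],[V],[W]$ do not lie in a common 2-regulus, equivalently (via Theorem~\ref{sublinesinBB}\ref{subline-secant-linfty}) the four points $T,U,V,W$ of $\li$ do not lie on a common \orsl. The key geometric fact I would establish is that a plane meeting four spread elements $[T],[U],[V],[W]$ of $\si$ in points $P_T,P_U,P_V,P_W$, where these four spread elements are not in a common 2-regulus, is \emph{uniquely determined} by those four points --- indeed, the four points are in general position in the plane (no three collinear, since a line through three of them would force the three corresponding spread elements plus a fourth into a 2-regulus by the defining property of a 2-regulus), and four points in general position span the plane. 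So if I can show $\pi$ and $\pi_0$ meet each of $[U],[V],[W]$ in the \emph{same} point, I am done.

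For $[U]$: both $\pi$ and $\pi_0$ meet $[T]$ in the line $\ell$, and both meet $[U]$ in a point; the line $\ell$ together with that point spans the plane, so I need the point $\pi\cap[U]$ to be forced. Here is where I would do the one genuine computation: using the coordinates of $[\ST]$ and of its cover planes from \cite{barwtgt1} (or re-deriving them via Lemma~\ref{transversaleqn}), I would check that a plane through the fixed line $\ell\subset[T]$ meeting $[U]$ at all meets it in a point determined by $\ell$ and $[U]$ --- in other words that there is a \emph{unique} plane through $\ell$ meeting $[U]$, and likewise for $[V]$ and $[W]$. This uniqueness follows because $\langle \ell,[U]\rangle$ is at most a 4-space (a line plus a skew plane spans a 4-space, and $\ell\subset[T]$ is skew to $[U]$), inside which the planes through $\ell$ form a pencil; a dimension count shows generically exactly one of them meets the plane $[U]$. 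Granting this, $\pi\cap[U]=\pi_0\cap[U]$, and similarly for $V,W$, so $\pi$ and $\pi_0$ share four points in general position and hence coincide. Therefore $\pi$ is a cover plane.

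The main obstacle I anticipate is the dimension/uniqueness step for $\pi\cap[U]$: one must rule out the degenerate possibility that $\langle\ell,[U]\rangle$ is only a 3-space (which would happen precisely if $\ell$ met $[U]$, impossible as $\ell\subset[T]$ and distinct spread elements are disjoint) and, more delicately, that the pencil of planes through $\ell$ in the 4-space $\langle\ell,[U]\rangle$ contains \emph{more} than one plane meeting $[U]$ --- this is exactly where the hypothesis that $[T],[U]$ (and then $[V],[W]$) avoid a common 2-regulus must be used, since in a 2-regulus the transversal lines would produce a whole line's worth of planes through $\ell$ meeting $[U]$. Making this regulus obstruction precise, most cleanly by passing to $\PG(6,q^3)$ and using the transversal lines $g,g^\r,g^{\r^2}$ of $\S$, is the technical heart of the argument.
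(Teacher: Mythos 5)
The decisive step in your reduction is false. You argue that, because $\langle \ell,[U]\rangle$ is a $4$-space (where $\ell=\pi\cap[T]$), there is a unique plane through $\ell$ meeting $[U]$, so that $\pi\cap[U]$ is forced and must coincide with $\pi_0\cap[U]$ for the unique cover plane $\pi_0$ through $\ell$. In fact the opposite holds: inside the $4$-space $\langle \ell,[U]\rangle$ any two planes meet (since $2+2\ge 4$), so \emph{every} one of the $q^2+q+1$ planes through $\ell$ contained in $\langle \ell,[U]\rangle$ meets $[U]$; equivalently, $\langle \ell,P\rangle$ is such a plane for each of the $q^2+q+1$ points $P\in[U]$. (Moreover, the planes through a line in a $4$-space form a two-parameter family, not a pencil; pencils of planes live in $3$-spaces.) Thus ``meets $[U]$'' is only a codimension-one condition on the $q^3+q^2+q+1$ planes through $\ell$ in $\si$, and likewise for $[V]$ and $[W]$ taken singly, so nothing is pinned down by your dimension count and the equality $\pi\cap[U]=\pi_0\cap[U]$ does not follow. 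As a consequence, your argument never genuinely uses the hypotheses that $\pi$ meets all three of $[U],[V],[W]$ and that $[T],[U],[V],[W]$ are not in a common $2$-regulus --- and these are precisely what must drive the proof; the paragraph in which you flag this as ``the technical heart'' proposes to resolve it by the same incorrect uniqueness claim, so it is not a repairable detail but a failure of the approach as sketched.

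What is salvageable is your correct observation that every line of $[T]$ lies on exactly one cover plane, so it suffices to prove $\pi=\pi_0$. But that forces you to use the three further elements simultaneously. For instance, in the quotient $\PG(3,q)$ of the planes through $\ell$, each of $[U],[V],[W]$ projects to a plane; one must show --- and this is exactly where the no-common-$2$-regulus hypothesis has to enter --- that these three planes meet in a single point rather than sharing a line, whence $\pi$ and $\pi_0$ both correspond to that point and coincide. Alternatively one can argue via the result from \cite{barwtgt1} quoted later in this paper (in the proof of Theorem~\ref{coverplanestoconic}) that a tangent splash is uniquely determined by its centre and three further points in suitably general position, applied to the set of spread elements met by $\pi$. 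Note also that the present paper does not prove Theorem~\ref{splash-coversets-T5}; it is quoted as background from \cite{barwtgt1}, so there is no in-paper proof to compare against --- but as written your proposal does not establish the statement.
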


\section[Structure]{Structure of the splash in $\PG(6,q)$}\Label{sect:structure}

Let $\Bpi$ be an \orsp\ of $\PG(2,q^3)$ tangent to $\li$ at the point $T$ with tangent splash $\ST$.
By
Theorem~\ref{FFA-subplane-tangent}, in $\PG(6,q)$, $\Bpi$ corresponds to a ruled
surface $\bbb$ with special conic directrix $\C$ in the spread element
$[T]$. Further, $\ST$ corresponds to a set $[\ST]$ of $q^2+1$ planes of the
regular spread $\S$. 
In this section we work in the Bruck-Bose representation in $\PG(6,q)$ and
investigate the interaction between the  ruled surface $\bbb$ and the
tangent splash $[\ST]$, and in particular with the cover planes of 
$[\ST]$. The interaction is complex, so we begin with a verbal description
and diagram before stating the main result.

In $\PG(2,q^3)$, let $\ell$ be an \orsl\ of $\Bpi$ not through $T$, so $\ell$ is
disjoint from $\li$. Let $\overline\ell$ be the extension of $\ell$ to
$\PG(2,q^3)$, and let $\overline\ell\cap\li=L$. 
By Theorem~\ref{sublinesinBB}, in $\PG(6,q)$, $\ell$ corresponds to a special
twisted cubic $[\ell]$ in the 3-space $[\overline\ell]$ about
the spread element $[L]$. 

We will show that the following geometric relationship holds in $\PG(6,q)$.
Through a point $P\in[\ell]$,
there is a unique tangent line to the twisted cubic $[\ell]$; it meets
$\si$ in a point of $[L]$ which
we denote $I_{P,\ell}$.
The set of $q+1$ points $\D=\{I_{P,\ell}\st P\in[\ell]\}$ in $[L]$ is called the {\em
  \tangentshadow} of $[\ell]$. 
We will show that through each point of $\D$, there is a unique cover plane
$\deltam_m$ of the tangent splash $[\ST]$. The cover plane $I_m$ meets $[T]$
in a line, and we show that the resulting set of $q+1$ lines in $[T]$ are
the tangent lines of the conic directrix $\C$ of $[\Bpi]$. 
We will show that the cover plane $\deltam_m$ can be constructed from the points $I_{P,\ell}$
as follows. Let $m$ be a line of $\Bpi$ through
$T$ with points $T,P_1,\ldots,P_q$, see Figure~\ref{definingIPl}. Label the
lines through $P_1$ by $\ell_1,\ldots,\ell_q$. 
We show that the points $\{I_{P_1,\ell_1},\ldots, I_{P_1,\ell_q}\}$ lie on a line
$I_{P_1}$ that meets $[T]$, and that  
the lines $\{I_{P_1},\ldots,I_{P_q}\}$ lie in a plane  $\deltam_m$ as required. 

\begin{figure}[ht]
\centering
\input{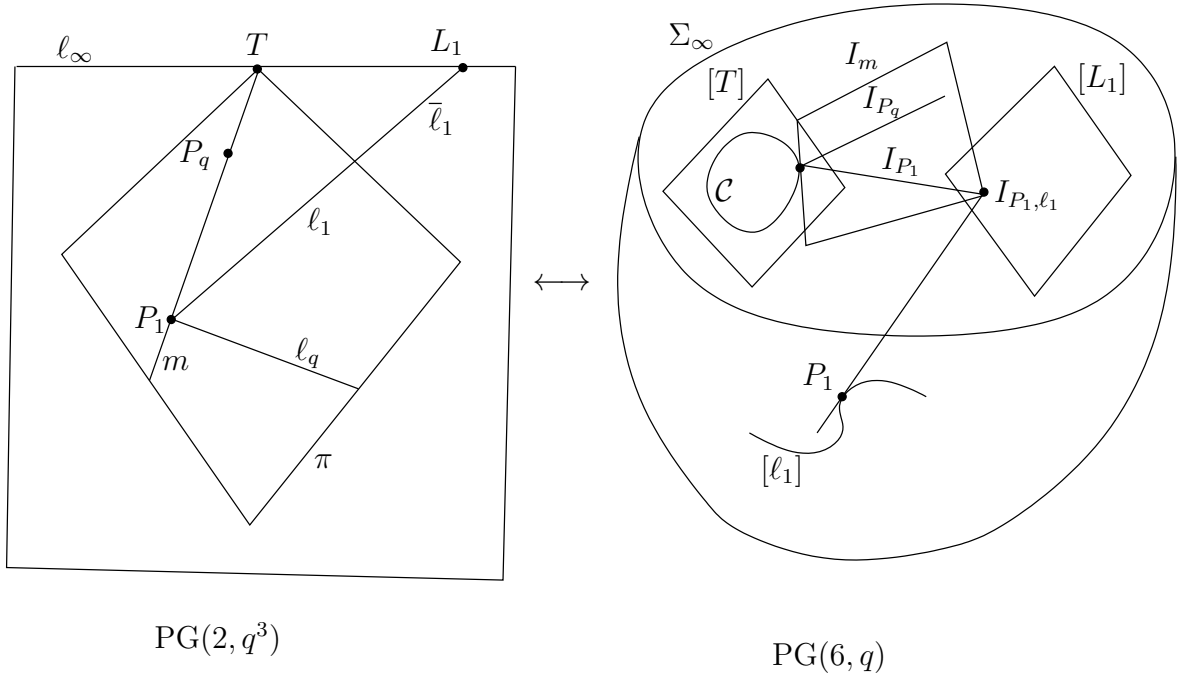}
\caption{Defining the points $I_{P,\ell}$, lines $I_P$ and planes $I_m$}\label{definingIPl}
\end{figure}

The main result is stated now.

\begin{theorem}\Label{interaction}
Let $\Bpi$ be an \orsp\ of $\PG(2,q^3)$ tangent to $\li$ at the point
$T$ with tangent splash $\ST$. In $\PG(6,q)$, $\Bpi$ corresponds to a ruled surface $\bbb$ with conic
directrix $\C$ in the spread element $[T]$. 
%Let $\ell$ be an \orsl\ of
%$\Bpi$ not through $T$, and let $P\in\ell$. In $\PG(6,q)$ define $I_{P,\ell}$
%to be the intersection of $\si$ with the tangent line at $P$ to the 
%twisted cubic $[\ell]$.
\begin{enumerate}
\item Let $P$ be a point in $\Bpi\setminus\{T\}$. In $\PG(6,q)$, the $q$ points
  $\{I_{P,\ell}\st \ell$ is  an \orsl\ of $\Bpi$, $P\in\ell, T\notin\ell\}$
    lie on a line denoted $I_P$ that meets
    $[T]$ in a point of $\C$. 
\item Let $m$ be an \orsl\ of $\Bpi$ through $T$. In $\PG(6,q)$, the $q$ lines $\{I_P\st
  P\in m, P\neq T\}$ lie in a
  cover plane $\deltam_m$ of $[\ST]$ that meets $[T]$ in a tangent line of $\C$.
\item Let $L\in\ST\setminus\{T\}$, and let $\ell$ be the unique \orsl\ of
  $\Bpi$ whose extension contains $L$. 
 In $\PG(6,q)$, the $q+1$ planes $\{\deltam_m\st m$ is an \orsl\ of $\Bpi$
  through $T\}$ meet $[L]$ in the \tangentshadow\ 
  $\D=\{I_{P,\ell}\st P\in[\ell]\}$ of $[\ell]$, and  meet $[T]$ in the $q+1$ tangent lines of $\C$.
\end{enumerate}
\end{theorem}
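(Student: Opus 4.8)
The plan is to exploit the transitivity result (Theorem~\ref{transitiveontangentsubplanes}) to reduce to the explicit coordinatised tangent subplane $\Bpi$ of Table~\ref{table-coords}, whose tangent splash is $\ST=\{(a+b\tau,1,0)\st a,b\in\GF(q)\}\cup\{(1,0,0)\}$, and then do the computation once in these coordinates. So the first step is to apply the Bruck--Bose map $\sigma$ to the affine points of $\Bpi$ listed in Table~\ref{table-coords} and obtain explicit $\PG(6,q)$-coordinates for the ruled surface $\bbb$, its conic directrix $\C$ in $[T]$, and for the twisted cubics $[\ell]$ corresponding to the \orsls\ $\ell$ of $\Bpi$ not through $T$ (these are the lines $\ell_d$, $\ell_{e,d,f}$ of the table). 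Using Lemma~\ref{transversaleqn} I can write down the transversal line $g$ explicitly, which pins down the regular spread $\S$ and hence each spread element $[L]$, $L\in\ST$, as a concrete plane of $\si$.

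Next I would compute, for a point $P\in[\ell]$, the tangent line to the twisted cubic $[\ell]$ at $P$ and intersect it with $\si$ to get the point $I_{P,\ell}\in[L]$; since a twisted cubic has a standard parametrisation $t\mapsto(1,t,t^2,t^3)$ in its 3-space, its tangent lines are routine, and the tangent \tangentshadow\ $\D$ in $[L]$ is then a conic (the dual of the osculating developable), or at least an explicit $(q+1)$-arc. The three parts are then verified in order:

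\begin{enumerate}
\item For fixed $P=P_{e,d}$ (and separately $S_d$, $U_f$) run through the $q$ lines $\ell_{e,d,f}$ ($f\in\GF(q)$) and the line $\ell_d$ through $P$, compute each $I_{P,\ell}$, and check the resulting $q$ points of $\si$ are collinear on a line $I_P$; then check $I_P$ meets $[T]$ and that the intersection point traces out the special conic $\C$ as $P$ varies. Collinearity is a rank-$2$ check on an explicit matrix, so it is a direct (if tedious) linear-algebra verification.
\item For fixed $m=m_e$, collect the $q$ lines $I_{P_{e,d}}$ ($d\in\GF(q)$) together with $I_{U_0}$ on $m_\infty$ if $m=m_\infty$, show they are coplanar (another rank check), call the plane $\deltam_m$, and show $\deltam_m$ meets $[T]$ in a line that is tangent to $\C$ (one intersection point with $\C$, namely the point of $\C$ corresponding to $T\in m$ via the projectivity of Theorem~\ref{FFA-subplane-tangent}(b)). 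To conclude $\deltam_m$ is a \emph{cover} plane I would invoke Theorem~\ref{splash-coversets-T5}: $\deltam_m$ meets the centre $[T]$ in a line and meets three further splash elements $[U],[V],[W]$ (the ones corresponding to the infinite points of three of the lines $\ell_{e,d,f}$) not lying in a common $2$-regulus, which forces $\deltam_m$ to be a cover plane. This avoids re-deriving the cover-plane structure from scratch.
\item Finally, fixing $L\in\ST\setminus\{T\}$ and the unique \orsl\ $\ell$ of $\Bpi$ whose extension meets $L$, I would show each cover plane $\deltam_m$ meets $[L]$ in exactly the point $I_{P_m,\ell}$ where $P_m=m\cap\ell$ (matching the construction of $I_{P_m}$ in part~1), so that as $m$ ranges over the $q+1$ lines of $\Bpi$ through $T$ the intersection points $\deltam_m\cap[L]$ sweep out $\{I_{P,\ell}\st P\in\ell\}$; since $\ell$ is an \orsl\ its $q$ affine points plus the point of $[\ell]$ near $L$ give the full \tangentshadow\ $\D$. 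The statement that the $\deltam_m$ meet $[T]$ in the $q+1$ tangents of $\C$ follows by assembling part~2 over all $m$ and using that a conic in $\PG(2,q)$ has exactly $q+1$ tangents.
\end{enumerate}

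The main obstacle will be bookkeeping rather than conceptual: carrying the $\tau$-expansions through $\sigma$ and keeping the spread elements, transversals, and twisted-cubic parametrisations mutually consistent, so that the rank-$2$ collinearity and rank-$3$ coplanarity checks come out cleanly. A secondary subtlety is handling the ``boundary'' lines and points of $\Bpi$ uniformly --- the line $m_\infty=TP_\infty$, the points $U_f$ on it, and the special \orsl\ $\ell_d$ through $P_\infty$ --- so that parts~1 and~2 genuinely cover all $q+1$ lines through $T$ and all $q+1$ tangents of $\C$, not just the generic ones; I expect to treat these either by a direct separate check or by a further collineation in the stabiliser of $\ST$ that moves them into generic position. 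Once the coordinates are set up, the identification of $\D$ with a conic-dual structure in $[L]$ and of the $\deltam_m\cap[T]$ with the tangents of $\C$ should both fall out of the projectivity in Theorem~\ref{FFA-subplane-tangent}(b) relating $\C$ and $\bbn$.
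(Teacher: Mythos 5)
Your plan is essentially the paper's own proof: it reduces to the coordinatised subplane via Theorem~\ref{transitiveontangentsubplanes}, computes the points $I_{P,\ell}$ by intersecting tangent (limit of secant) lines of the twisted cubics $[\ell_{e,d,f}]$ with $\si$, and then verifies the collinearity, coplanarity and tangency claims by direct coordinate checks, exactly as in Lemmas~\ref{cpoints} and~\ref{coords-of-the-IP}. The only cosmetic differences are that the paper certifies the cover-plane property by containment of $\deltam_m$ in the splash rather than via Theorem~\ref{splash-coversets-T5} (which works equally well and is used later in the paper), and that your ``boundary'' objects $S_d$ and $\ell_d$ are already the cases $P_{0,d}$ and $\ell_{e,d,0}$ of the generic parametrisation, so only the $m_\infty$, $U_f$ cases need the separate (limit) treatment.
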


We prove this result using coordinates.
By Theorem~\ref{transitiveontangentsubplanes}, we can
without loss of generality prove this for the \orsp\ $\Bpi$ coordinatised in
Section~\ref{coord-subplane}. 
By Theorem~\ref{FFA-subplane-tangent}, the \orsp\ $\Bpi$ corresponds to a ruled
surface $\bbb$ in $\PG(6,q)$ with a special conic directrix $\C$ in the spread element
$[T]$. We will need the coordinates of this conic, and the next result
calculates them. We use the following notation
\begin{eqnarray*}
%\theta^1\!(e)&=&(e+\tau)^{\r^2+\r}=(e+\tau^q)(e+\tau^{q^2}),\\ 
%\theta_1\!(e)&=&(e+\tau)^{\r^2+\r}=(e+\tau^q)(e+\tau^{q^2}),\\ 
%\theta_0\!(e)&=&(e+\tau)^{\r^2+\r}=(e+\tau^q)(e+\tau^{q^2}),\\ 
%\theta_{\! o}\:\!\!(e)&=&(e+\tau)^{\r^2+\r}=(e+\tau^q)(e+\tau^{q^2}),\\ 
%\theta^-(e)&=&(e+\tau)^{\r^2+\r}=(e+\tau^q)(e+\tau^{q^2}),\\ 
%\theta^{\mbox{\rm {-}}}\;\!\!\!(e)&=&(e+\tau)^{\r^2+\r}=(e+\tau^q)(e+\tau^{q^2}),\\ 
\minustheta(e)&=&(e+\tau)^{\r^2+\r}=(e+\tau^q)(e+\tau^{q^2}),\\ 
\plustheta(e)&=&(e+\tau)^{\r^2+\r+1}=(e+\tau)(e+\tau^q)(e+\tau^{q^2}),
\end{eqnarray*}
where $e\in\GF(q)$.
Note that since $\plustheta(e)^\r=\plustheta(e)$, it follows that
$\plustheta(e)\in\GF(\r)$. The next lemma  uses the generalised Bruck-Bose map $\sigma$ defined in Section~\ref{BBintro}.

\begin{lemma}\Label{cpoints} The conic directrix $\C$ in $[T]$ in $\PG(6,q)$
  of the \orsp\ $\Bpi$ coordinatised in Section~$\ref{coord-subplane}$ has points 
$C_e=\sigma(\minustheta(e)\tau,0,0)=(t_0,e^2+et_2,-e,\ 0,0,0,\allowbreak\
0)$ for $e\in\GF(\r)$,  and $C_\infty=\sigma(\tau,0,0)=(0,1,0,\ 0,0,0,\ 0)$.
Further, the tangent line to $\C$ in $[T]$ at the point $C_e$ is given by the line
$C_eT_e$ where $T_e=\sigma(\minustheta(e)^2\tau,0,0)$ for $e\in\GF(q)$, and $T_\infty=\sigma(\tau^2,0,0)$.
%\item $C_e=(t_0+(e^2+et_2)\tau-e\tau^2,0,0,\ 0,0,0,\ 0)$ for $e\in\GF(\r)$ and
%$C_\infty=(0,1,0,\ 0,0,0,\ 0)$.
%\item The equation of $\C$ when $[T]$ is regarded as $\PG(2,\r)$ with points $(x,y,z)$ with $x,y,z\in\GF(\r)$, not all zero, is: $t_0z^2-xy-t_2xz=0$.
%\item $\C$ is a special conic.
%\end{enumerate}
\end{lemma}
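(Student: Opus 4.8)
The plan is to compute the conic directrix $\C$ directly from the coordinates of $\Bpi$ given in Table~\ref{table-coords}. By Theorem~\ref{FFA-subplane-tangent}(2)(a), $\C$ is the conic in the spread element $[T]$ obtained as the directrix of the ruled surface $[\Bpi]$; concretely, its points arise as the intersections of the generator lines of $[\Bpi]$ with $[T]$. The generator line through a point $P_{e,d}\in m_e$ meets $[T]$ in a point of $\C$, and since the ruled surface in $\PG(6,q^3)$ contains the transversal line $g$ of Lemma~\ref{transversaleqn}, the generator line through $P_{e,d}$ is exactly the line joining $\sigma(P_{e,d})$ to the conjugate transversal points. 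So first I would, for each $e\in\GF(q)$, determine the point of $[T]$ lying on the line $\langle \sigma(P_{e,d}), g\cap\text{(cone)}\rangle$ — equivalently, identify which point of the spread element $[T]$ corresponds under the Bruck-Bose construction to the "direction at infinity" of the line $m_e=[0,e+\tau,-e]$ restricted to $\Bpi$. The line $m_e$ meets $\li$ at $T=(1,0,0)$, and the relevant scalar is governed by $(e+\tau)$ and its conjugates, which is why the norm-like quantities $\minustheta(e)=(e+\tau)^{q^2+q}$ and $\plustheta(e)=(e+\tau)^{q^2+q+1}$ appear.

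The key computational step is then to apply the generalised Bruck-Bose map $\sigma$ to the field element $\minustheta(e)\tau\in\GF(q^3)$. Using $\minustheta(e)=(e+\tau^q)(e+\tau^{q^2})$ and expanding in the basis $\{1,\tau,\tau^2\}$ via the minimal polynomial $x^3-t_2x^2-t_1x-t_0$, one writes $\minustheta(e)\tau = c_0+c_1\tau+c_2\tau^2$ with $c_i\in\GF(q)$, giving $C_e=\sigma(\minustheta(e)\tau,0,0)=(c_0,c_1,c_2,0,0,0,0)$. The claim is that this equals $(t_0,\,e^2+et_2,\,-e,\,0,0,0,0)$; I would verify this by noting $\minustheta(e)\tau = \tau(e+\tau^q)(e+\tau^{q^2})$ and that $\tau\cdot\tau^q\cdot\tau^{q^2}=N(\tau)=t_0$ (the norm, equal to the product of roots), while the cross terms collect using $\tau^q+\tau^{q^2}=t_2-\tau$ (since $\tau+\tau^q+\tau^{q^2}=t_2$) and $\tau^{q}\tau^{q^2}=-t_0/\tau + \cdots$; carefully tracking these symmetric-function identities yields the three claimed coordinates. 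The point $C_\infty$ corresponds to the limiting direction $d\to\infty$ (the point $U_f$ / line $m_\infty$ case) and is just $\sigma(\tau,0,0)=(0,1,0,0,0,0,0)$, which one reads off immediately. That $\{C_e : e\in\GF(q)\}\cup\{C_\infty\}$ is genuinely a conic (not just $q+1$ collinear points) follows because $[\Bpi]$ is a ruled surface with a conic directrix by Theorem~\ref{FFA-subplane-tangent}(2)(a), so it suffices to check these $q+1$ points are the directrix points; alternatively one verifies they satisfy a quadratic relation in the plane $[T]$.

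For the tangent lines, I would use that the tangent to the conic $\C$ at $C_e$ is determined once we know $\C$ as a conic parametrised by $e$: the derivative of the parametrisation $e\mapsto C_e$ gives the tangent direction. Differentiating $(t_0,\,e^2+et_2,\,-e)$ with respect to $e$ gives direction $(0,\,2e+t_2,\,-1)$, and I would check that the point $T_e=\sigma(\minustheta(e)^2\tau,0,0)$ lies on $C_e$ plus a multiple of this direction — i.e. that $\minustheta(e)^2\tau$ expands to $(t_0, (2e+t_2)\cdot(\text{something})+\cdots, \ldots)$ consistent with $C_e + \lambda(0,2e+t_2,-1)$ for suitable $\lambda$. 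Since $\minustheta(e)^2 = \minustheta(e)\cdot\minustheta(e)$ and $\minustheta(e)\in\GF(q^3)$ with $\minustheta(e)^{q}= \theta(e)/(e+\tau^{q})\cdot\cdots$, one can relate $\minustheta(e)^2\tau$ to $\minustheta(e)\tau$ multiplied by the "next" conjugate factor, and the computation parallels the first part. The case $e=\infty$ with $T_\infty=\sigma(\tau^2,0,0)=(0,0,1,0,0,0,0)$ and $C_\infty=(0,1,0,\ldots)$ gives the tangent at $C_\infty$ as $\langle(0,1,0,\ldots),(0,0,1,\ldots)\rangle$, consistent with the limit.

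The main obstacle I expect is the bookkeeping in expressing products like $\minustheta(e)\tau$ and $\minustheta(e)^2\tau$ in the basis $\{1,\tau,\tau^2\}$: one must repeatedly reduce powers $\tau^3,\tau^4,\tau^5$ using $\tau^3=t_2\tau^2+t_1\tau+t_0$, and simultaneously manage the Frobenius conjugates $\tau^q,\tau^{q^2}$ via the elementary symmetric functions $e_1=t_2$, $e_2=-t_1$, $e_3=t_0$ of the roots. The risk is sign errors and mismatched normalisations of homogeneous coordinates; it is worth recording the identity $\plustheta(e)=(e+\tau)\minustheta(e)\in\GF(q)$ early, since it lets one trade the awkward factor $(e+\tau)$ for the scalar $\plustheta(e)$ at several points and keeps the expressions in $\GF(q)$ where they belong.
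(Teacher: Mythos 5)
Your treatment of the conic points is essentially the paper's: you identify $\C$ as the set of points where the generator lines $[m_e]$ (the Bruck--Bose images of the lines of $\Bpi$ through $T$) meet $[T]$, and you obtain $C_e=\sigma(\minustheta(e)\tau,0,0)$ by the direction-at-infinity computation, with the normalising factor $\minustheta(e)$ making the last coordinate $\plustheta(e)\in\GF(q)$; the expansion $\minustheta(e)\tau=t_0+(e^2+et_2)\tau-e\tau^2$ via the symmetric functions of $\tau,\tau^q,\tau^{q^2}$ is exactly the paper's calculation. One slip: your parenthetical claim that the generator through $P_{e,d}$ is ``the line joining $\sigma(P_{e,d})$ to the conjugate transversal points'' is false --- the transversals are themselves (extra) generators of the extended ruled surface and the generators are pairwise disjoint --- but this remark is inessential, since your operative description (point at infinity of $[m_e]$) is the correct one. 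Where you genuinely diverge is the tangency claim. The paper never writes down the tangent direction: it shows directly that the line $C_eT_e$ meets $\C$ only in $C_e$, by assuming $C_f=rC_e+sT_e$, clearing denominators and equating coefficients of $1,\tau,\tau^2$ to force $e=f$ (with a separate short argument for $C_\infty T_\infty$). You instead take the parametrisation $e\mapsto(t_0,e^2+et_2,-e)$ of $\C$ inside $[T]$, declare the tangent at $C_e$ to be spanned by $C_e$ and the derivative $(0,2e+t_2,-1)$, and then check that $T_e=\sigma(\minustheta(e)^2\tau,0,0)$ lies on that line. This does work: expanding gives $T_e=(3e^2+2et_2-t_1)\,C_e-\plustheta(e)\,(0,2e+t_2,-1)$ with $\plustheta(e)\neq0$, and the derivative argument is legitimate in every characteristic here because the parametrisation is separable (in characteristic $2$ the derivative direction $(0,t_2,-1)$ is the nucleus, as it should be); equivalently you can justify the tangent line via the gradient of the conic's equation $t_0z^2-xy-t_2xz=0$, which the paper only introduces later to prove specialness. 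What each route buys: yours is shorter once the tangent-equals-span-of-derivative fact (or the conic equation) is in hand, and it produces the explicit coefficient identity above; the paper's is self-contained, needing no equation of $\C$ and no appeal to smooth-curve facts, at the cost of the three-equation elimination. Do make the derivative step explicit (or route it through the conic equation), and spell out the $e=\infty$ case, where the tangent at $C_\infty=(0,1,0)$ is $x=0=\langle C_\infty,\sigma(\tau^2,0,0)\rangle$, rather than leaving it as a limit.
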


\begin{proof} 
We use the notation for points and lines of $\Bpi$ given in Section~\ref{coord-subplane}.
By Theorem~\ref{sublinesinBB}, the \orsl\
$m_e$ of $\Bpi$ corresponds  in $\PG(6,q)$ to an affine line $[m_e]$ contained in the
ruled surface $[\Bpi]$, $e\in\GF(q)\cup\{\infty\}$. Hence the $q+1$
generators of the ruled surface $\bbb$ are the lines
$[m_e]$, $e\in\GF(q)\cup\{\infty\}$. Thus the points 
of the conic directrix $\C$ in $[T]$ are $C_e=[m_e]\cap\si$,  $e\in\GF(\r)\cup\{\infty\}$. 

If $e=\infty$, consider the two points $U_0=(1,1,1)$ and
$U_1=(1+\tau,1,1)$ of $\Bpi$
which lie on the line $m_\infty$. So
$[m_\infty]=\langle\sigma(1,1,1),\sigma(1+\tau,1,1)\rangle$, and subtracting gives
$C_\infty=[m_\infty]\cap\si=\sigma(\tau,0,0)=(0,1,0,\ 0,0,0,\ 0)$. 
For $e\in\GF(\r)$, consider the two distinct points
  $P_{0,e}=(e,e,e+\tau)$, $P_{1,e}=(e+\tau,e,e+\tau)$ of $\Bpi$ on the line $m_e$.
  Multiplying by $\minustheta(e)$ we have
  $P_{0,e}=(e\minustheta(e),e\minustheta(e),\plustheta(e))$ and
  $P_{1,e}=(\plustheta(e),e\minustheta(e),\plustheta(e))$.  Mapping these to points
  in $\PG(6,q)$, and subtracting gives $C_e=\sigma(\minustheta(e)\tau,0,0)$ as
  required.   
Note that
  $\minustheta(e)\tau=\tau(e+\tau^\r)(e+\tau^{\r^2})=e^2\tau+e\tau(\tau^\r+\tau^{\r^2}))+\tau\tau^\r\tau^{\r^2}=e^2\tau+e\tau(t_2-\tau)+t_0=t_0+(e^2+et_2)\tau-e\tau^2$, and so 
$\sigma(\minustheta(e)\tau,0,0)=(t_0,e^2+et_2,-e,\ 0,0,0,\allowbreak\
0)$. 

To calculate the tangents for $\C$ in $[T]$, let
$T_e=\sigma(\minustheta(e)^2\tau,0,0)$ for $e\in\GF(q)$, and consider the line
$t_e=C_eT_e=\{ rC_e+sT_e\st r,s\in \GF(\r)\cup\{\infty\}\}$ in $[T]$. We
show that the point $C_f$ of $\C$
is on the line $t_e$ if and only if $e=f$. Suppose $C_f=rC_e+sT_e$ for some $r,s\in
\GF(\r)\cup\{\infty\}$, then 
 $r\minustheta(e)\tau+s\minustheta(e)^2\tau=\minustheta(f)\tau$. Multiplying by $(e+\tau)^2(f+\tau)/\tau$ yields
  $r(f+\tau)(e+\tau)\plustheta(e)+s(f+\tau)\plustheta(e)^2=\plustheta(f)(e+\tau)^2$, and so $(ref\plustheta(e)+sf\plustheta(e)^2)+(re\plustheta(e)+rf\plustheta(e)+s\plustheta(e)^2)\tau+r\plustheta(e)\tau^2=\plustheta(f)e^2+2e\plustheta(f)\tau+\plustheta(f)\tau^2$. Equating the coefficients (in $\GF(q)$) of $\tau^2,\tau,1$ gives  
\begin{eqnarray}
r\plustheta(e)&=&\plustheta(f)\label{eqnef1}\\
re\plustheta(e)+rf\plustheta(e)+s\plustheta(e)^2&=&2e\plustheta(f)\label{eqnef2}\\
ref\plustheta(e)+sf\plustheta(e)^2&=&\plustheta(f)e^2.\label{eqnef3}
\end{eqnarray}
Note that as $\plustheta(f)\neq0$, we have $r\neq0$. Substituting
(\ref{eqnef1}) into (\ref{eqnef2}) gives
$s\plustheta(e)=r(e-f)$. Substituting this and (\ref{eqnef1}) into (\ref{eqnef3})
gives $r\plustheta(e)(e-f)^2=0$, and so $e=f$.
  Thus  the line $C_eT_e$ meets $\C$ in the point $C_e$, and so is a tangent
  to $\C$ for $e\in\GF(q)$.

Now consider the case of $C_\infty=\sigma(\tau,0,0)$,
  $T_\infty=\sigma(\tau^2,0,0)$. In a similar manner to the above, if
  $r\tau+s\tau^2=\minustheta(e)\tau$ we multiply by $(e+\tau)/\tau$ to get
  $r(e+\tau)+s(e+\tau)\tau=\plustheta(e)$. Equating coefficients of $\tau^2$
  and then $\tau$ gives $s=0$
  and $r=0$. Hence $C_\infty T_\infty$ is a tangent as required.
Note that if we put $e=\infty$ in the expression for
$T_e=\sigma(\minustheta(e)^2\tau,0,0)$, we get $\sigma(\tau,0,0)$, which is
$C_\infty$. So we need
to define $T_\infty$ differently to ensure that $C_\infty
T_\infty$ is a line.

Finally, we use these coordinates to verify that $\C$ is a special conic. 
If we regard $[T]$ as $\PG(2,q)$ with points $(x,y,z)$, then $\C$ has
equation $t_0z^2-xy-t_2xz=0$. This is easy to verify as the 
point $(t_0,e^2+et_2,x-e)$ corresponding to $C_e$ satisfies the equation,
and the point $(0,1,0)$ corresponding to $C_\infty$ also satisfies the equation.
To prove that $\C$ meets the transversals of $\S$, 
we recall from Lemma~\ref{transversaleqn} that one transversal point is 
$R=(t_1+t_2\tau-\tau^2,t_2-\tau,-1)$.  Substituting $R$ into the equation of
$\C$ gives
$t_0(-1)^2-(t_2-\tau)(t_1+t_2\tau-\tau^2)-t_2(-1)(t_1+t_2\tau-\tau^2)=t_0+t_1\tau+t_2\tau^2-\tau^3=0$,
hence
$R\in\C$. Since the equation of
$\C$ is over $\GF(\r)$, the points $R^\r$, $R^{\r^2}$ are also on $\C$.
\end{proof}

Theorem~\ref{interaction} is proved in the next lemma using the following three
steps. Part 1 calculates the coordinates of the points
 $I_{P,\ell}$ for all pairs $(P,\ell)$, where $P$ is a point of
 $\Bpi$ incident with an \orsl\ $\ell$ of $\Bpi$ ($P\neq T$, $\ell$ not through $T$).
Part 2 shows that by varying the \orsl\ $\ell$ through $P$ we construct a set of $q$ collinear points
$\{I_{P,\ell}\st \ell {\rm \ a\ line\ of\ }\Bpi{\rm \ through\ } P, T\notin\ell\}$. These
points lie on a line
denoted $I_{P}$ that meets $[T]$ in a point of the conic directrix $\C$. In part 3
of the  lemma we show that the set of $q$ lines 
$\{I_P\st P {\rm \ is\ on\ a\ line\ }m{\rm \ of\ }\Bpi{\rm \  through\ } T,
P\neq T\}$
lie on a cover plane $\deltam_m$ that meets $[T]$ in a
tangent line of $\C$.

\begin{lemma}\Label{coords-of-the-IP}  Let $\Bpi$ be the \orsp\
  coordinatised in Section~$\ref{coord-subplane}$. Consider the \orsls\
$\ell_{e,d,f}$ of $\Bpi$ with points $P_{e,d}$ and $U_f$, $e,d,f\in\GF(q)$.
 Then
\begin{enumerate}
\item $I_{P_{e,d},\ell_{e,d,f}}=\sigma(((1-d+ef)\tau+f\tau^2)\minustheta(e)^2,\ \tau\minustheta(e)^2,\ 0)$, $(e,d,f\in\GF(\r))$.\\
$I_{U_f,\ell_{e,d,f}}=\sigma((1-d+ef)\tau+f\tau^2,\ \tau, \ 0)$, $(e,d,f\in\GF(\r))$.
\item
%\begin{enumerate}
%\item 
For fixed $e,d\in\GF(q)$, the set
$I_{P_{e,d}}=\{I_{P_{e,d},\ell_{e,d,f}}\st f\in \GF(q)\}\cup\{C_e\}$ is a line,
%\item 
For fixed $f\in\GF(q)$, the set $I_{U_f}=\{I_{U_f,\ell_{e,d,f}}\st e,d\in\GF(\r)\}\cup\{C_\infty\}$ is a
line.
%\end{enumerate}
\item
%\begin{enumerate}
%\item 
For fixed $e\in\GF(q)$, the set 
 $\deltam_{m_e}=\{I_{P_{e,d}}\st d\in \GF(q)\}\cup\{C_eT_e\}$ is a cover plane of $[\ST]$,
\\%\item 
The set $\deltam_{m_\infty}=\{I_{U_f}\st f\in\GF(\r)\}\cup\{C_\infty T_\infty\}$
is a cover plane of $[\ST]$.
%\end{enumerate}
\end{enumerate}
\end{lemma}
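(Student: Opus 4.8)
The plan is to prove the three parts of Lemma~\ref{coords-of-the-IP} in order, since each part feeds the next, and then observe that these parts immediately give Theorem~\ref{interaction} via the transitivity established in Theorem~\ref{transitiveontangentsubplanes}.

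\textbf{Part 1.}
First I would compute, for a fixed \orsl\ $\ell_{e,d,f}$ of $\Bpi$, the special twisted cubic $[\ell_{e,d,f}]$ in $\PG(6,q)$. The natural route is: the \orsl\ $\ell_{e,d,f}$ extends to a line $\overline{\ell}_{e,d,f}$ of $\PG(2,q^3)$ meeting $\li$ in some point $L$; by Theorem~\ref{sublinesinBB}(\ref{FFA-orsl}), $[\ell_{e,d,f}]$ is a special twisted cubic in the 3-space $[\overline{\ell}_{e,d,f}]$ about $[L]$. Using the coordinates in Table~\ref{table-coords}, the points $P_{e,d}=(e+d\tau,e,e+\tau)$ and $U_f=(1+f\tau,1,1)$ lie on $\ell_{e,d,f}$, so I can parametrise the whole \orsl\ over $\GF(q)$ and push it through $\sigma$ to get an explicit rational cubic parametrisation $t\mapsto N(t)$ in $\PG(6,q)$. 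Then $I_{P,\ell}$ is defined as the intersection with $\si$ of the tangent line to $[\ell]$ at $P$; so I compute the derivative $N'(t)$ at the parameter value corresponding to $P$ (resp.\ $U_f$), form the tangent line $\langle N(t),N'(t)\rangle$, and intersect with $z=0$. The claim is that this yields exactly the stated formulas $\sigma(((1-d+ef)\tau+f\tau^2)\minustheta(e)^2,\ \tau\minustheta(e)^2,\ 0)$ and $\sigma((1-d+ef)\tau+f\tau^2,\ \tau,\ 0)$. This is a direct but somewhat delicate computation; the trick that keeps it clean is, as in Lemma~\ref{cpoints}, to clear denominators by multiplying representative vectors by appropriate powers of $(e+\tau)$ so that one works with $\plustheta(e)\in\GF(q)$ rather than with $\minustheta(e)\in\GF(q^3)$.

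\textbf{Parts 2 and 3.}
For Part 2, fix $e,d$ and vary $f\in\GF(q)$: the formula for $I_{P_{e,d},\ell_{e,d,f}}$ is affine-linear in $f$ (the $f$ appears only as a coefficient of $\tau^2$ inside the first coordinate slot), so the $q$ points trace out an affine line, and letting $f\to\infty$ the limiting direction is $\sigma(\tau^2\minustheta(e)^2,0,0)$; I check this together with the fixed part gives a line through $C_e=\sigma(\minustheta(e)\tau,0,0)$ by verifying $C_e$ is in the span. (Here I should double-check the $C_e$ formula against Lemma~\ref{cpoints} — note $\minustheta(e)\tau\cdot(e+\tau)=\plustheta(e)\tau$, which links the two normalisations.) The $I_{U_f}$ case is simpler since that formula is already linear in $e,d$ after scaling, and $C_\infty=\sigma(\tau,0,0)$ clearly lies on the resulting line. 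For Part 3, fix $e$ and vary $d$: by Part 2 each $I_{P_{e,d}}$ is a line through $C_e$; I show the union over $d$ of these lines, together with the tangent line $C_eT_e$, spans only a plane $\deltam_{m_e}$ — because all coordinate vectors involved lie in the 3-dimensional span of $\{\sigma(\tau\minustheta(e)^2,0,0),\ \sigma(\tau^2\minustheta(e)^2,0,0),\ \sigma(\tau\minustheta(e)^2,\tau\minustheta(e)^2,0)\}$ plus the scaled $C_e$, but actually collapses to dimension~3 (projective dimension~2). Finally, to conclude $\deltam_{m_e}$ is a \emph{cover plane}, I invoke Theorem~\ref{splash-coversets-T5}: I show $\deltam_{m_e}$ meets $[T]$ in the line $C_eT_e$ and meets three further splash planes $[L]$ of $[\ST]=\{(a+b\tau,1,0)\st a,b\in\GF(q)\}\cup\{(1,0,0)\}$ in points, with the four spread elements not lying in a common 2-regulus — the latter being a short rank/determinant check using Lemma~\ref{transversaleqn}. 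The case $m_\infty$ is analogous with $C_\infty T_\infty$.

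\textbf{Main obstacle.}
The hard part will be Part 1: carrying out the tangent-line computation for the twisted cubic $[\ell_{e,d,f}]$ cleanly enough that the answer visibly matches the stated closed form, especially handling the parameter value corresponding to $U_f$ (which sits "at infinity" relative to the natural parametrisation anchored at $P_{e,d}$) and making sure the $\minustheta(e)^2$ factor emerges correctly rather than some other power. Once the coordinates in Part 1 are in hand, Parts 2 and 3 are essentially linear algebra over $\GF(q)$ together with the two quoted structural theorems, and the passage to the full Theorem~\ref{interaction} is immediate: Part 1 names the points $I_{P,\ell}$, Part 2 is statement~(1), Part 3 is statement~(2), and statement~(3) follows by collecting, over all $m$ through $T$, the point where $\deltam_m$ meets a fixed $[L]$ — by construction that point is $I_{P,\ell}$ for the unique $P=\ell\cap m$, so the $q+1$ such points form the \tangentshadow\ $\D$ of $[\ell]$, while the intersections with $[T]$ are the $q+1$ tangents $C_eT_e$, $C_\infty T_\infty$ of $\C$ by Lemma~\ref{cpoints}.
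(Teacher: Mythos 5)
Your proposal follows essentially the same route as the paper: Part 1 by an explicit tangent-line computation for $[\ell_{e,d,f}]$ (the paper does the equivalent computation via the secant through $P_{e,d}$ and $R_{e,d,f,h}$, setting $h=e$, and a limit $h\to\infty$ for the $U_f$ case, which sidesteps your ``parameter at infinity'' obstacle), Part 2 by linearity of the formulas in $f$ (resp.\ $e,d$), and Part 3 by showing everything collapses into a plane and then certifying it is a cover plane; your use of Theorem~\ref{splash-coversets-T5} for the last step is fine and is exactly the criterion the paper uses elsewhere (the paper itself settles Part 3 by the shorter observation that the plane is contained in the splash, after exhibiting a line such as $\langle T_e,\sigma(\tau\minustheta(e)^2,\tau\minustheta(e)^2,0)\rangle$ meeting all the $I_{P_{e,d}}$). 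One concrete slip to fix in Part 2: $f$ does not appear only with $\tau^2$ in the first slot; the $f$-dependent part of $I_{P_{e,d},\ell_{e,d,f}}$ is $f(e+\tau)\tau\minustheta(e)^2=f\plustheta(e)\cdot\minustheta(e)\tau$, so the direction of the pencil as $f$ varies is $C_e=\sigma(\minustheta(e)\tau,0,0)$ itself (via the identity you quote), not $\sigma(\tau^2\minustheta(e)^2,0,0)$; with your stated direction the claim that $C_e$ lies on the line would fail for $e\neq0$, whereas the corrected direction gives it immediately, exactly as in the paper's identity $I_{P_{e,d},\ell_{e,d,f}}=f\plustheta(e)C_e+I_{P_{e,d},\ell_{e,d,0}}$.
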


\begin{proof} We use the coordinates for points and lines of $\Bpi$ given in
  Table~\ref{table-coords}.  
Note that every line of $\Bpi$ not through
$T$ is $\ell_{e,d,f}$ for some $e,d,f\in\GF(q)$, and every point $P$ of
$\Bpi$ not on the line $m_\infty$
can be written as $P_{e,d}$ for some $e,d\in\GF(q)$. Further, the point
$P_{e,d}$ lies on the line $\ell_{e,d,f}$ for $f\in\GF(q)$. Hence the pairs
$(P_{e,d},\ell_{e,d,f})$ and $(U_f,\ell_{e,d,f})$ cover all the pairs
$(P,\ell)$ such that $P$ is a point of $\Bpi$ distinct from $T$, and $\ell$
is an \orsl\ of $\Bpi$ not through $T$. Hence it suffices to calculate
$I_{P,\ell}$ for these pairs. 

Consider a line $\ell_{e,d,f}$ for fixed $e,d,f\in\GF(q)$. The points
of $\ell_{e,d,f}$ are 
$R_{e,d,f,h}$ for $h\in\GF(q)\cup\{\infty\}$. Note that
$R_{e,d,f,e}=P_{e,d}$ and $R_{e,d,f,\infty}=U_f$.
To calculate the
coordinates of the point 
$I_{P_{e,d},\ell_{e,d,f}}$ in $\PG(6,q)$, we need to look at the tangent
line to the twisted cubic $[\ell_{e,d,f}]$
at the point $P_{e,d}$. We consider the secant line $P_{e,d}R_{e,d,f,h}$ 
of $[\ell_{e,d,f}]$ and
calculate where it meets $\si$. Then letting $h=e$ will give us the
coordinates of 
$I_{P_{e,d},\ell_{e,d,f}}$. To find where this secant line meets $\si$, we
take the coordinates of $P_{e,d}$ and $R_{e,d,f,h}$ in $\PG(2,q^3)$, and
write them with last coordinate in $\GF(q)$. This allows us to use the generalised
Bruck-Bose map $\sigma$ defined in Section~\ref{BBintro} to convert them to
coordinates in $\PG(6,q)$.

Let $X=\minustheta(e)\plustheta(h)P_{e,d}$ and $Y=\plustheta(e)\minustheta(h)R_{e,d,f,h}$ so
\begin{eqnarray*}
X&=&\sigma((e+d\tau)(h+\tau)\minustheta(e)\minustheta(h),\ e(h+\tau)\minustheta(e)\minustheta(h),\ \plustheta(e)\plustheta(h))\\
Y&=&\sigma((h+(fh-fe+d)\tau)(e+\tau)\minustheta(e)\minustheta(h),\ h(e+\tau)\minustheta(e)\minustheta(h),\ \plustheta(e)\plustheta(h))
\end{eqnarray*}
and
\begin{eqnarray*}
X-Y
&=&(e-h)\sigma(((1-d+ef)\tau+f\tau^2)\minustheta(e)\minustheta(h),\ \tau\minustheta(e)\minustheta(h), \ 0)\\
&\equiv&\sigma(((1-d+ef)\tau+f\tau^2)\minustheta(e)\minustheta(h),\ \tau\minustheta(e)\minustheta(h), \ 0)
\end{eqnarray*}
as $e-h\in\GF(\r)$.  Now 
%taking the limit as $h\rightarrow e$ 
letting $h=e$, 
we have 
\[
I_{P_{e,d},\ell_{e,d,f}}=\sigma(((1-d+ef)\tau+f\tau^2)\minustheta(e)^2,\ \tau\minustheta(e)^2,\ 0)
\]
as required. 

Now consider the point $U_f=R_{e,d,f,\infty}$ on 
$\ell_{e,d,f}$.  
Similar to the above let $X=\plustheta(h)U_f$ and $Y=\minustheta(h)R_{e,d,f,h}$.  Then
\begin{eqnarray*}
X&=&\sigma((1+f\tau)(h+\tau)\minustheta(h),\ (h+\tau)\minustheta(h),\ \plustheta(h))\\
Y&=&\sigma((h+(fh-fe+d)\tau)\minustheta(h),\ h\minustheta(h),\ \plustheta(h))
\end{eqnarray*}
and 
$ X-Y=\sigma(((1-d+ef)\tau+f\tau^2)\minustheta(h),\ \tau\minustheta(h),\
0)$, so
$$
\frac 1{h^2}(X-Y)=\sigma\Biggl(((1-d+ef)\tau+f\tau^2)\!\left(1+\frac {\tau^\r}h\right)\!\!\left(1+\frac {\tau^{\r^2}}h\right),
\tau\!\left(1+\frac {\tau^\r}h\right)\!\!\left(1+\frac {\tau^{\r^2}}h\right)\!, 0\Biggr).$$
%\begin{eqnarray*}
%\frac 1{h^2}(X-Y)&=&\sigma\Biggl(((1-d+ef)\tau+f\tau^2)\left(1+\frac {\tau^\r}h\right)\left(1+\frac {\tau^{\r^2}}h\right),\\
%&&\quad \tau\left(1+\frac {\tau^\r}h\right)\left(1+\frac {\tau^{\r^2}}h\right),\ 0\Biggr).
%\end{eqnarray*}
Taking the limit of this point as $h\rightarrow\infty$ gives
$I_{U_f,\ell_{e,d,f}}=\sigma((1-d+ef)\tau+f\tau^2,\ \tau, \ 0)$.
This completes the proof of part 1.

For part 2, we fix the point $P_{e,d}$ (so fix $e,d\in\GF(q)$) and look at
the lines of $\Bpi$ through $P_{e,d}$, but not through $T$, namely the $q$ lines
$\ell_{e,d,f}$ for $f\in\GF(q)$. We show that the $q$ 
points $I_{P_{e,d},\ell_{e,d,f}}$ for $f\in\GF(q)$ lie on a line that contains
the point $C_e=\sigma(\tau\minustheta(e),0,0)$ of the conic directrix $\C$.  First note that $I_{P_{e,d},\ell_{e,d,0}}=\sigma((1-d)\tau\minustheta(e)^2,\tau\minustheta(e)^2,0)$.  Now
\begin{eqnarray*}
I_{P_{e,d},\ell_{e,d,f}}&=&\sigma(((1-d+ef)\tau+f\tau^2)\minustheta(e)^2,\ \tau\minustheta(e)^2,\ 0)\\
&=&\sigma(f(e+\tau)\tau\minustheta(e)^2,\ 0,\ 0)+\sigma((1-d)\tau\minustheta(e)^2,\tau\minustheta(e)^2,0)\\
&=&\sigma(f\tau\plustheta(e)\minustheta(e),\ 0,\ 0)+I_{P_{e,d},\ell_{e,d,0}}\\
&=&f\plustheta(e)C_e+I_{P_{e,d},\ell_{e,d,0}},
\end{eqnarray*}
as $f\plustheta(e)\in\GF(q)$. Hence for fixed $e,d\in\GF(q)$, the points
$I_{P_{e,d},\ell_{e,d,f}}$ for $f\in\GF(q)$ all lie on the line joining
$C_e$ and $I_{P_{e,d},\ell_{e,d,0}}$, as required.  

Now we consider the remaining points $U_f$ of $\Bpi$.  For fixed
$f\in\GF(q)$, the lines $\ell_{e,d,f}$, $e,d\in\GF(q)$ consist of  the $q$ lines of
$\Bpi$ through $U_f$, not through $T$.
We show that the $q$ points 
$I_{U_f,\ell_{e,d,f}}$, $e,d\in\GF(\r)$, lie on a line through $C_\infty=\sigma(\tau,0,0)$. 
First note that $I_{U_f,\ell_{0,0,f}}=\sigma(\tau+f\tau^2,\tau,0)$.  Now
\begin{eqnarray*}
I_{U_f,\ell_{e,d,f}}&=&\sigma((1+ef-d)\tau+f\tau^2,\ \tau, \ 0)\\
&=&\sigma((ef-d)\tau+(\tau+f\tau^2),\ \tau, \ 0)\\
&=&(ef-d)\sigma(\tau,0,0)+\sigma(\tau+f\tau^2,\tau,0)\\
&=&(ef-d)C_\infty+I_{U_f,\ell_{0,0,f}},
\end{eqnarray*}
as $ef-d\in\GF(q)$.
Hence for fixed $f\in\GF(q)$, the points $I_{U_f,\ell_{e,d,f}}$ for $e,d\in\GF(q)$ all lie on the
line joining the points $C_\infty$ and $I_{U_f,\ell_{0,0,f}}$,
as required.

For part 3, note that for fixed $e\in\GF(q)$, the lines $I_{P_{e,d}}$ for $d\in \GF(\r)$ and the
tangent line $C_eT_e$ all share the point $C_e$.  Also note that the
$I_{P_{e,d}}$ for $d\in \GF(\r)$ are distinct lines, since they meet
different splash elements.  Thus to show that they lie in a plane, it is
sufficient to find a line not through $C_e$ that meets $C_eT_e$ and meets each $I_{P_{e,d}}$
for $d\in \GF(\r)$. 
The line
$n=\langle\sigma(\tau\minustheta(e)^2,\tau\minustheta(e)^2,0),T_e\rangle$
does this as the line $I_{P_{e,d}}$ contains the point
$I_{P_{e,d},\ell_{e,d,0}}$ (for any $d\in\GF(q)$) and
$$
I_{P_{e,d},\ell_{e,d,0}}
=\sigma((1-d)\tau\minustheta(e)^2,\tau\minustheta(e)^2,0)
=\sigma(\tau\minustheta(e)^2,\tau\minustheta(e)^2,0)-dT_e\in
n.
$$
Hence $I_{m_e}$ is a plane. It is a cover plane of $[\ST]$ as it is
completely contained in $[\ST]$.

Finally, we consider the set $\deltam_{m_\infty}=\{I_{U_f}\st
f\in\GF(\r)\}\cup\{C_\infty T_\infty\}$. In a similar manner to the
previous case, we have
$$
I_{U_f,\ell_{0,0,f}}
=\sigma(\tau+f\tau^2,\tau,0)
=fT_\infty+\sigma(\tau,\tau,0).
$$
So $\langle T_\infty,\sigma(\tau,\tau,0)\rangle$ is a line not through $C_\infty$ that meets
$I_{U_f}$, $f\in\GF(q)$ and $C_\infty T_\infty$, hence $I_{m_\infty}$ is a
plane. 
\end{proof}

Note that this lemma completes the proof of Theorem~\ref{interaction}.
We will later need the coordinates of the \tangentshadow\  for the line $\ell_1=\ell_{e,1,0}$
calculated in this
lemma, so we state this as a corollary. 

\begin{corollary}\Label{limage}
The \orsl\ 
 $\ell_1=\{P_{e,1}\st e\in\GF(q)\}\cup\{P_\infty=U_0\}$ has
\tangentshadow\ 
$\D=\{D_e=I_{P_{e,1},\ell_{e,1,0}}=\sigma(0,\minustheta(e)^2\tau,0)\st e\in\GF(\r)\}\cup\{D_\infty=\sigma(0,\tau,0)\}$.
\end{corollary}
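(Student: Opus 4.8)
The \orsl{} $\ell_1=\{P_{e,1}\st e\in\GF(q)\}\cup\{P_\infty=U_0\}$ has \tangentshadow{}
$\D=\{D_e=I_{P_{e,1},\ell_{e,1,0}}=\sigma(0,\minustheta(e)^2\tau,0)\st e\in\GF(\r)\}\cup\{D_\infty=\sigma(0,\tau,0)\}$.

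**Plan of proof.** The corollary is essentially a specialisation of Lemma~\ref{coords-of-the-IP}, so the plan is simply to substitute $d=1$, $f=0$ into the formulas computed there and simplify. First I would identify $\ell_1$ with the line $\ell_{e,1,0}$ of $\Bpi$: from Table~\ref{table-coords}, $\ell_{e,1,0}$ passes through the point $P_{e,1}=(e+\tau,e,e+\tau)$ (taking $d=1$) and through $U_0=(1,1,1)=P_\infty$ (taking $f=0$), and varying over $h\in\GF(q)\cup\{\infty\}$ the points $R_{e,1,0,h}=(h+\tau,h,h+\tau)$ are exactly the points $\{P_{e,1}\st e\in\GF(q)\}\cup\{P_\infty\}$. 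Hence $\ell_1$ as described is indeed an \orsl{} of $\Bpi$, and its points give the claimed parametrisation.

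Next I would invoke part~1 of Lemma~\ref{coords-of-the-IP}. For a point $P_{e,d}$ on $\ell_{e,d,f}$ we have
$I_{P_{e,d},\ell_{e,d,f}}=\sigma(((1-d+ef)\tau+f\tau^2)\minustheta(e)^2,\ \tau\minustheta(e)^2,\ 0)$.
Setting $d=1$ and $f=0$ kills the entire first coordinate (since $1-d+ef=1-1+0=0$ and the $f\tau^2$ term vanishes), leaving
$D_e=I_{P_{e,1},\ell_{e,1,0}}=\sigma(0,\ \tau\minustheta(e)^2,\ 0)$ for $e\in\GF(q)$.
For the infinite point $P_\infty=U_0$, I would use the $U_f$ formula from part~1 with $f=0$: $I_{U_f,\ell_{e,d,f}}=\sigma((1-d+ef)\tau+f\tau^2,\ \tau,\ 0)$, and again $d=1$, $f=0$ gives $D_\infty=\sigma(0,\tau,0)$. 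The fact that, by the definition in Section~\ref{sect:structure}, the \tangentshadow{} of $[\ell_1]$ is precisely the set $\{I_{P,\ell_1}\st P\in[\ell_1]\}$, together with the observation that the pairs $(P_{e,1},\ell_{e,1,0})$ and $(U_0,\ell_{e,1,0})$ exhaust the points of $\ell_1$, then yields the stated set $\D$.

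There is no real obstacle here; the only point needing a word of care is bookkeeping: checking that $\ell_1=\ell_{e,1,0}$ really is a \emph{single} \orsl{} independent of $e$ (the subscript $e$ in $\ell_{e,1,0}$ is harmless since the line $[-1,\,ef-d+1+f\tau,\,d-ef]=[-1,\,1,\,1]$ when $d=1,f=0$, independent of $e$), and that the point $D_e$ genuinely depends on $e$ through $\minustheta(e)^2=(e+\tau^q)^2(e+\tau^{q^2})^2$ so that the $q+1$ listed points are distinct. Both are immediate. So the corollary follows by reading off the $d=1$, $f=0$ case of Lemma~\ref{coords-of-the-IP}(1).
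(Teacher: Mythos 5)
Your proposal is correct and takes essentially the same route as the paper, which states this corollary as an immediate specialisation ($d=1$, $f=0$) of Lemma~\ref{coords-of-the-IP}(1) with no separate proof. One tiny slip: with $d=1$, $f=0$ the line coordinates are $[-1,\,0,\,1]$, not $[-1,\,1,\,1]$, though your key observation that $\ell_{e,1,0}$ is independent of $e$ and contains all the points $P_{h,1}$ and $U_0$ still stands.
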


\section{The tangent subspace of a point}\Label{sect:tgtsubspace}

Let $\Bpi$ be an \orsp\ in $\PG(2,\r^3)$ tangent to $\li$ at the point $T$.  For
each affine point $P$ of $\Bpi$, 
we can construct an \orsp\ $P^\perp$ that contains $P$ and is secant to $\li$ as follows. Let
$\ell_1,\ldots,\ell_{q+1}$ be the $q+1$ lines of $\Bpi$ through
$P$. Then $m=\{\ell_i\cap\li\st i=1,\ldots,q+1\}$ is an \orsl\ of $\li$
through $T$. Now
$m$ and $PT\cap\Bpi$ are two \orsls\  through $T$, and so lie in a unique
\orsp\ which we denote by $P^\perp$, and call the tangent subspace.
Recall that in $\PG(6,q)$, $\Bpi$ corresponds to a ruled surface $\bbb$ and $P^\perp$
corresponds to a plane $[P^\perp]$.
In \cite{barwtgt1} it was shown that in $\PG(6,q)$, the plane 
$[P^\perp]$ is the tangent space to the ruled surface $\bbb$ at the point $P$. We can use
Theorem~\ref{interaction} to 
investigate the structure of $[P^\perp]$ in $\PG(6,q)$ in more detail.

Label the \orsls\ of $\Bpi$ through $P$ by $TP,\ell_1,\ldots,\ell_q$. In
$\PG(6,q)$, the \orsl\ $\ell_i$
corresponds to a twisted cubic $[\ell_i]$, $i=1,\ldots,q$.
We show that
the $q+1$ lines through
$P$ in the plane $[P^\perp]$ consist of  the
generator line of $\bbb$ through $P$ and the 
tangent line to the twisted cubic $[\ell_i]$ at the point $P$, for $i=1,\ldots,q$.

\begin{lemma}\Label{Pperpgen}
The plane $[P^\perp]$ contains a generator line of $\bbb$.
\end{lemma}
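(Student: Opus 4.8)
The plan is to show that the generator line of the ruled surface $\bbb$ through the point $P$ lies in the plane $[P^\perp]$. Since $P^\perp$ is the unique \orsp\ containing the two \orsls\ $TP\cap\Bpi$ and $m=\{\ell_i\cap\li\}$, and since $TP\cap\Bpi$ is a line of $\Bpi$ through $T$, the key observation is that the line $TP\cap\Bpi$ is a common \orsl\ of both $\Bpi$ and $P^\perp$. In the Bruck-Bose representation, an \orsl\ of $\Bpi$ through $T$ corresponds (by Theorem~\ref{sublinesinBB}, part~2, or more precisely by the generator structure in Theorem~\ref{FFA-subplane-tangent}(2)(a)) to a generator line of the ruled surface $\bbb$; this is exactly the generator line $[m_e]$ (or $[m_\infty]$) of $\bbb$ through $P$.

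First I would identify the line $\ell = TP\cap\Bpi$: it is an \orsl\ of $\Bpi$ lying on the line $TP$ through $T$, so it corresponds in $\PG(6,q)$ to an affine line $[\ell]$ of $\PG(6,q)\setminus\si$ lying inside $\bbb$ — namely one of the $q+1$ generator lines of the ruled surface. Next I would note that this same \orsl\ $\ell$ lies in $P^\perp$ by construction (it is one of the two \orsls\ through $T$ defining $P^\perp$). Since $P^\perp$ is secant to $\li$, Theorem~\ref{FFA-subplane-tangent}(1) says $[P^\perp]$ is a plane meeting $q+1$ spread elements, and the \orsl\ $\ell$ of $P^\perp$ corresponds to a line contained in that plane $[P^\perp]$. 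Finally, since the Bruck-Bose map is well-defined on affine points, the affine points of $[\ell]$ computed as a subset of $[\Bpi]$ agree with those computed as a subset of $[P^\perp]$; hence the generator line $[\ell]$ of $\bbb$ through $P$ is contained in $[P^\perp]$, which is what we want.

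The main obstacle to watch for is the consistency of the representation: one must be sure that the line of $\PG(6,q)$ obtained from $\ell$ viewed inside $\Bpi$ is literally the same line as the one obtained from $\ell$ viewed inside $P^\perp$. This follows because both are determined by the affine points of $\ell$ (an \orsl\ not contained in $\li$ has $q$ affine points, and these $q$ affine points determine the line of $\PG(6,q)$ uniquely), and the Bruck-Bose correspondence on affine points does not depend on which \orsp\ we regard $\ell$ as lying in. A clean alternative, since we have set up coordinates, is simply to exhibit everything explicitly for the standard \orsp\ $\Bpi$ of Section~\ref{coord-subplane}: pick $P = P_{e,d}$, verify that the generator $[m_e]$ of $\bbb$ passes through $P$ with direction point $C_e$, and check directly that both $P$ and $C_e$ lie in the plane $[P^\perp]$ (using that $P^\perp \supset m_e$ forces $C_e = [m_e]\cap\si \in [P^\perp]$). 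Either route reduces the lemma to the basic compatibility between Theorems~\ref{sublinesinBB}, \ref{FFA-subplane-tangent}, and the definition of $P^\perp$, with no substantial new computation required.
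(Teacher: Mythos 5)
Your argument is correct, and it takes a genuinely different route from the paper. The paper proves the lemma by coordinates: it reduces via Theorem~\ref{transitiveontangentsubplanes} to the standard subplane and the point $P_{e,d}$, quotes a coordinate result from the companion paper (that the conic point $C_e=\sigma(\tau\minustheta(e),0,0)$ lies in $[P_{e,d}^\perp]$), and then observes that the line $P_{e,d}C_e$ corresponds to $m_e$ and is therefore a generator of $\bbb$ inside $[P_{e,d}^\perp]$. You instead argue synthetically from the definition of $P^\perp$: the \orsl\ $TP\cap\Bpi$ lies in $P^\perp$ by construction, it corresponds in $\PG(6,q)$ to the generator of $\bbb$ through $P$ (Theorem~\ref{sublinesinBB}(2) together with the generator description in Theorem~\ref{FFA-subplane-tangent}(2)(a)), and since $P^\perp$ is secant to $\li$ its Bruck--Bose image $[P^\perp]$ is a plane containing the images of all affine points of $P^\perp$; as the $q\ge 2$ affine points of $TP\cap\Bpi$ already determine the line $[TP\cap\Bpi]$ and the Bruck--Bose map is defined pointwise (independently of which subplane the subline is viewed in), that generator line lies in $[P^\perp]$. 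This buys you a self-contained, coordinate-free proof that does not rely on the external Corollary~10.4; what the paper's coordinate route buys in exchange is the explicit identification of the conic-directrix point $C_e$ of $[P^\perp]$, which is then reused immediately in the proof of Theorem~\ref{perpIm}, so your route would still need that coordinate information (or your suggested explicit check that $C_e\in[P^\perp]$) for the next step of the paper.
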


\begin{proof} Using Theorem~\ref{transitiveontangentsubplanes} and \cite[Lemma~\ref{TS-grouplemma}]{barwtgt1}, we can 
without loss of generality prove this for the \orsp\ $\Bpi$ coordinatised
in Section~\ref{coord-subplane} and the point $P_{e,d}$ of $\Bpi$. 
In  \cite[Corollary 10.4]{barwtgt1}, it is shown that the point
$C_e=\sigma(\tau\minustheta(e),0,0)$ lies in the plane $[P_{e,d}^\perp]$. 
By Lemma~\ref{cpoints}, $C_e$ is a
point of the conic directrix of $\bbb$, so
the plane $[P_{e,d}^\perp]$ meets the conic directrix. The line
$P_{e,d}C_e$ is in the plane $[P_{e,d}^\perp]$, so in $\PG(2,q^3)$ it
corresponds to the \orsl\ $m_e$ of $\Bpi$. Thus in $\PG(6,q)$, $[P_{e,d}^\perp]$ contains the generator
$P_{e,d}C_e$ of the ruled surface $\bbb$. 
\end{proof}

\begin{theorem}\Label{perpIm}
For each affine point $P$ of $\Bpi$, we have $[P^\perp]=\langle P,I_P\rangle$.
\end{theorem}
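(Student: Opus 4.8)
The plan is to show that $[P^\perp]$ and $\langle P,I_P\rangle$ are both planes of $\PG(6,q)$ through $P$, and that $I_P\subseteq[P^\perp]$; since a plane contained in a plane must equal it, this gives $[P^\perp]=\langle P,I_P\rangle$. By Theorem~\ref{transitiveontangentsubplanes} and \cite[Lemma~\ref{TS-grouplemma}]{barwtgt1} I reduce to the coordinatised \orsp\ $\Bpi$ of Section~\ref{coord-subplane} and to $P=P_{e,d}$ (the case $P=U_f$ being analogous), so that Lemmas~\ref{cpoints} and~\ref{coords-of-the-IP} are available. That $[P^\perp]$ is a plane containing $P$ is Theorem~\ref{FFA-subplane-tangent}(1), since $P^\perp$ is an \orsp\ secant to $\li$ through the affine point $P$; that $\langle P,I_P\rangle$ is a plane follows because $I_P$ is a line lying in $\si$ while $P$ is affine, so $P\notin I_P$ and the two span a plane.

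Next I reduce $I_P\subseteq[P^\perp]$ to a single incidence. By Theorem~\ref{interaction}(1) (and Lemma~\ref{coords-of-the-IP}(2)), the points $I_{P,\ell}$, as $\ell$ ranges over the \orsls\ of $\Bpi$ through $P$ not through $T$, together with the point $C_e$ of the conic directrix $\C$, all lie on $I_P$, and each $I_{P,\ell}$ is distinct from $C_e$; hence any single $I_{P,\ell}$ together with $C_e$ already spans $I_P$. By Lemma~\ref{Pperpgen} (via \cite[Corollary~10.4]{barwtgt1}) we know $C_e\in[P^\perp]$. So it is enough to prove that for one \orsl\ of $\Bpi$ through $P$, say $\ell=\ell_{e,d,0}$, the point $I_{P,\ell}$ lies in $[P^\perp]$; then $\langle P,I_P\rangle=\langle P,C_e,I_{P,\ell}\rangle\subseteq[P^\perp]$ and equality follows by the dimension count above.

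For that single incidence I intend to use two facts: $[P^\perp]$ is the tangent plane to the ruled surface $\bbb$ at $P$ (established in \cite{barwtgt1}), and $I_{P,\ell}$ lies on the tangent line to the twisted cubic $[\ell]$ at $P$ (this is the definition of $I_{P,\ell}$). The twisted cubic $[\ell]$ lies on $\bbb$: its $q+1$ points are images of affine points of $\Bpi$, hence affine points of $\bbb$, and in the cubic extension $[\ell]$ meets the three transversals of $\S$ (Theorem~\ref{sublinesinBB}(3)), which lie on $\bbb$ (Theorem~\ref{FFA-subplane-tangent}(c)); intersecting $[\ell]$ with each of the nine quadrics cutting out $\bbb$ (Theorem~\ref{FFA-subplane-tangent}(d)) and applying B\'ezout forces $[\ell]\subseteq\bbb$. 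A curve lying on a surface has its tangent line at a smooth point contained in the tangent plane of the surface there, so the tangent line to $[\ell]$ at $P$, and in particular $I_{P,\ell}$, lies in $[P^\perp]$, as needed.

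The main obstacle is precisely this last step: making ``a curve on $\bbb$ has its tangent inside $[P^\perp]$'' rigorous over $\GF(q)$, including the smallest values of $q$ where the B\'ezout count $q+4>6$ fails. The robust alternative, and the one I expect the authors actually use given the style of Lemmas~\ref{cpoints} and~\ref{coords-of-the-IP}, is to dispense with tangent‑space formalism: take the coordinates of $[P_{e,d}^\perp]$ computed in \cite{barwtgt1} (which are needed there for \cite[Corollary~10.4]{barwtgt1}) and the coordinates of $I_{P_{e,d},\ell_{e,d,0}}$ from Lemma~\ref{coords-of-the-IP}, and verify by a direct computation in $\GF(q^3)$ that the latter point lies in the former plane; the case $P=U_f$ is handled the same way with $C_\infty$ and $T_\infty$ in place of $C_e$ and $T_e$.
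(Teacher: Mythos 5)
Your proposal is correct and, in the form you yourself identify as the robust route, it is essentially the paper's proof: the authors reduce to the coordinatised $\Bpi$ and $P_{e,d}$, quote \cite[Corollary 10.4]{barwtgt1} for the fact that $[P_{e,d}^\perp]$ contains both the conic point $C_e$ and the point $J_{e,d}=\sigma((1-d)\tau\minustheta(e)^2,\tau\minustheta(e)^2,0)$, observe via Lemma~\ref{coords-of-the-IP}(1) that $J_{e,d}=I_{P_{e,d},\ell_{e,d,0}}$, and conclude exactly by your span argument since $P_{e,d}\notin I_{P_{e,d}}$. The single incidence you were worried about is thus already recorded in the earlier paper, so no tangent-space or B\'ezout argument (which, as you note, is delicate for small $q$) is needed.
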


\begin{proof} Let $\Bpi$ be the \orsp\ coordinatised in
  Section~\ref{coord-subplane} and let $P_{e,d}$ be a point of $\Bpi$. 
By \cite[Corollary 10.4]{barwtgt1}, $[P_{e,d}^\perp]$ contains
the conic directrix point $C_e$ and
 the point
 $J_{e,d}=\sigma((1-d)\tau\minustheta(e)^2,\allowbreak\tau\minustheta(e)^2,0)$.  
Using Lemma~\ref{coords-of-the-IP}(1), we have
$J_{e,d}=I_{P_{e,d},\ell_{e,d,0}}$. By Lemma~\ref{coords-of-the-IP}(2),
$I_{P_{e,d}}$ contains $C_e$ and $J_{e,d}$. As $P_{e,d}\notin I_{P_{e,d}}$, it follows that  $[P_{e,d}^\perp]=\langle
P_{e,d},C_eJ_{e,d}\rangle=\langle P_{e,d},I_{P_{e,d}}\rangle$.
\end{proof}

\begin{corollary}
  In $\PG(2,q^3)$, let $\Bpi$ be an \orsp\ tangent to $\li$ at $T$, and let
  $P$ be an affine point of $\Bpi$. The lines of $\Bpi$ through $P$
  correspond in $\PG(6,q)$ to $q$ twisted cubics $[\ell_i]$,
  $i=1,\ldots,q$, and one generator line of $\bbb$ (the ruled surface
  corresponding to $\Bpi$). The tangent to the twisted cubic $[\ell_i]$ at $P$ lies in the
  tangent subspace $[P^\perp]$ for $i=1,\ldots,q$.
\end{corollary}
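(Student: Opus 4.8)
The plan is to combine Theorem~\ref{perpIm} with the structural content of Lemma~\ref{coords-of-the-IP} and Theorem~\ref{interaction}. By Theorem~\ref{perpIm} we have $[P^\perp]=\langle P,I_P\rangle$, so the $q+1$ lines of $[P^\perp]$ through $P$ are exactly the lines $\langle P,Q\rangle$ as $Q$ ranges over the $q+1$ points of $I_P$. By Theorem~\ref{interaction}(1) (equivalently Lemma~\ref{coords-of-the-IP}(2)), the line $I_P$ meets $[T]$ in a point of the conic directrix $\C$; call it $C$. Lemma~\ref{Pperpgen} (or rather its proof) already identifies $\langle P,C\rangle$ as the generator line of $\bbb$ through $P$. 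The remaining $q$ points of $I_P$ are precisely the points $I_{P,\ell_i}$ for $i=1,\dots,q$, where $\ell_1,\dots,\ell_q$ are the lines of $\Bpi$ through $P$ other than $TP$. So everything reduces to one claim: \emph{for each $i$, the line $\langle P,I_{P,\ell_i}\rangle$ is the tangent line to the twisted cubic $[\ell_i]$ at $P$.}

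This last claim is essentially the \emph{definition} of $I_{P,\ell}$ read backwards: recall from Section~\ref{sect:structure} that $I_{P,\ell}$ was defined as the point in which the unique tangent line to the twisted cubic $[\ell]$ at $P$ meets $\si$. Hence the tangent line to $[\ell_i]$ at $P$ passes through both $P$ and $I_{P,\ell_i}$, and since these two points are distinct (the tangent line is not contained in $\si$, as a special twisted cubic has no points in $\si$ by the remark before Theorem~\ref{sublinesinBB}, so its tangent lines meet $\si$ in at most one point and $P$ is affine), the tangent line equals $\langle P,I_{P,\ell_i}\rangle$. This line lies in $[P^\perp]=\langle P,I_P\rangle$ because $I_{P,\ell_i}\in I_P$ by Theorem~\ref{interaction}(1). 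That is the whole argument; no new computation is needed beyond what Lemma~\ref{coords-of-the-IP} already supplies.

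I would therefore write the proof as a short synthesis: first invoke Theorem~\ref{sublinesinBB}(\ref{FFA-orsl}) to say each $\ell_i$ not through $T$ corresponds to a special twisted cubic $[\ell_i]$ and $TP$ corresponds to a generator of $\bbb$ (this uses Lemma~\ref{Pperpgen}); then invoke Theorem~\ref{perpIm} to get $[P^\perp]=\langle P,I_P\rangle$; then recall the definition of $I_{P,\ell_i}$ as the trace on $\si$ of the tangent to $[\ell_i]$ at $P$, so that $\langle P,I_{P,\ell_i}\rangle$ \emph{is} that tangent; and finally note $I_{P,\ell_i}\in I_P$ by Theorem~\ref{interaction}(1) to conclude the tangent lies in $[P^\perp]$. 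One should also remark, for completeness, that the $q$ tangents $\langle P,I_{P,\ell_i}\rangle$ together with the generator $\langle P,C\rangle$ account for all $q+1$ lines of the plane $[P^\perp]$ through $P$, since the $q+1$ points $C,I_{P,\ell_1},\dots,I_{P,\ell_q}$ are exactly the points of the line $I_P$ (they are pairwise distinct because they lie in distinct spread elements, as already noted in the proof of Lemma~\ref{coords-of-the-IP}(3)).

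The main obstacle is really just bookkeeping: making sure the correspondence "line of $\Bpi$ through $P$ not through $T$" $\leftrightarrow$ "twisted cubic through $P$" is matched up with the indexing $\ell_1,\dots,\ell_q$ used in Theorem~\ref{perpIm}, and confirming that the tangent line to $[\ell_i]$ at the affine point $P$ genuinely meets $\si$ in a single well-defined point so that "$\langle P, I_{P,\ell_i}\rangle$ equals the tangent" is literally true rather than merely "the tangent contains $I_{P,\ell_i}$." Both points are immediate from the definitions and from the fact that a special twisted cubic is disjoint from $\si$, but they are the only things that need to be said carefully. Everything else is a direct citation of Theorems~\ref{interaction} and~\ref{perpIm} and Lemma~\ref{Pperpgen}.
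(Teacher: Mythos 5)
Your argument is correct and is exactly the derivation the paper intends: the corollary is stated without a separate proof precisely because it is the immediate synthesis of Theorem~\ref{perpIm} ($[P^\perp]=\langle P,I_P\rangle$), the definition of $I_{P,\ell}$ as the trace on $\si$ of the tangent to $[\ell]$ at $P$, Theorem~\ref{interaction}(1), and Lemma~\ref{Pperpgen} for the generator. Your extra care about the tangent at the affine point $P$ meeting $\si$ in a single point, and the count of the $q+1$ lines of $[P^\perp]$ through $P$, is sound and consistent with the paper.
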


\section{Constructing an \orsp\ from a splash and a subline}\Label{sect:construct}

In $\PG(2,q^3)$, let $\ST$ be a tangent splash of $\li$ with centre $T$ and let $\ell$ be an \orsl\ disjoint from
$\li$ whose extension $\overline\ell$ to $\PG(2,q^3)$ meets $\li$ in a
point $L=\overline\ell\cap\li$ of $\ST\backslash\{T\}$. 
By Theorem~\ref{splashextsubplane}, there is a unique \orsp\
tangent to $\li$  that contains
$\ell$ and has tangent splash $\ST$. 
In this section we give a geometric construction of this subplane.

We 
work in the
Bruck-Bose representation of $\PG(2,q^3)$ in $\PG(6,q)$, and also use the
cubic extension $\PG(6,q^3)$ of $\PG(6,q)$. 
We use the following notation for the cubic extension $\PG(6,q^3)$. If
$\K$ is a subspace or a curve of $\PG(6,q)$ then denote by $\K^*$ the natural extension
of $\K$ to a subspace or curve of $\PG(6,q^3)$. 

In $\PG(6,q)$, the \orsl\ $\ell$ 
 corresponds to a special twisted cubic
$[\ell]$ in a
3-space $[\overline\ell]$ about a spread element $[L]$ (by Theorem~\ref{sublinesinBB}). 
In order to construct the required \orsp\ of $\PG(2,q^3)$, we 
use Theorem~\ref{subplane-tangent-iff} and
construct a ruled surface $\mathcal V$ in $\PG(6,q)$ with 
twisted cubic directrix $[\ell]$ and a conic directrix $\C$ in the centre $[T]$, such
that in the cubic
extension $\PG(6,q^3)$, $\mathcal V^*$ contains the transversals of the
regular spread $\S$. 

Recall from Theorem~\ref{coverplanes} that the tangent splash $[\ST]$ in
$\PG(6,q)$ has an associated set of $q^2+q+1$ cover planes that each meet the
centre $[T]$ in distinct lines, and meet every other plane of $[\ST]$ in distinct points. 
Our construction will exploit the rich geometrical nature of the cover planes
of $[\ST]$ and their interaction with the shadow points of the twisted
cubic $[\ell]$ (this interaction is described in Section~\ref{sect:structure}).
The construction is now stated, then is proved using a series of lemmas. 

\begin{construction}\Label{construction}
In $\PG(2,q^3)$, let $\ST$ be a tangent splash of $\li$ with centre $T$ and let $\ell$ be an \orsl\ disjoint from
$\li$ whose extension to $\PG(2,q^3)$ meets $\li$ in a point of
$\ST\backslash\{T\}$. Then in $\PG(6,q)$:
\begin{enumerate}
\item For each point $N_i$, $i=1,\ldots,q+1$, of the twisted cubic $[\ell]$, let $D_i$
  be the intersection of the tangent to $[\ell]$ at $N_i$ with $[L]$. The
  set of points $\D=\{D_1,\ldots,D_{q+1}\}$ in $[L]$ is called the 
    \tangentshadow\  of $[\ell]$.
\item Map the \tangentshadow\  $\D$ in $[L]$ to a special conic $\C$ in $[T]$ by
\begin{enumerate}
\item Through each point $D_i\in\D$ there is a unique cover plane of $\ST$,
  this cover plane meets $[T]$ in a line $t_i$.
\item The lines $t_1,\ldots,t_{q+1}$ are tangent lines to a unique special conic $\C$ of $[T]$.
\end{enumerate}
\item Let $t_i\cap\C=C_i$, then the lines $N_iC_i$, $i=1,\ldots,q+1$, form a ruled surface
  that corresponds in $\PG(2,q^3)$ to the unique \orsp\ tangent to $\li$
  containing $\ell$ and with tangent splash $\ST$.
\end{enumerate}
\end{construction}

%We begin with the \orsp\ coordinatised in Section~\ref{coord-subplane} and
%look at Singer cycle acting on the conics directrix, and on an
%\tangentshadow. By Theorem~\ref{transitiveontangentsubplanes} and \cite[\ref{TS-grouplemma}(\ref{TS-G9})]{barwtgt1}, this result
%holds for any \orsp, and a \tangentshadow.
%\\{\sf Wen-Ai, is this sentence ok?}
%\begin{lemma}\Label{singerorbits}  Let $\Bpi$ be the \orsp\ coordinatised in
%  Section~\ref{coord-subplane}. Consider the collineation $\CapitalTheta$ of
%  $\PG(6,\r)$ defined in Theorem~\ref{PGL7r} acting on the Bruck-Bose
%  representation $\bbb$ of $\Bpi$.
%\begin{enumerate}
%\item\Label{soc}
%Consider the conic directrix $\C$ of $\bbb$.  Then $\C$ has an orbit of size  $\r^2+\r+1$  under $\langle\CapitalTheta\rangle$.
%\item\Label{sod}
%Consider the \tangentshadow $\D$ calculated in Corollary~\ref{limage}.  Then $\D$ has an orbit of size $\r^2+\r+1$ under $\langle\CapitalTheta\rangle$.
%\end{enumerate}
%\end{lemma}

We begin with three lemmas that investigate the shadow of an \orsl.
We first determine the
structure of a \tangentshadow\ in a spread element $[L]$, and show that it is
different for $q$ even and $q$ odd. 

\begin{lemma}\Label{limagelookslike}
Let $\ell$ be an \orsl\ of $\PG(2,q^3)$ disjoint from $\li$, and let $\D$
be the shadow of $\ell$ in $\PG(6,q)$.
\begin{enumerate}
\item For $q$ even, the \tangentshadow\ $\D$ is a special conic of $[L]$.
\item For $q$ odd, the \tangentshadow\ $\D$ has the property that no 3 points of $\D$ lie in a special conic of $[L]$.
\end{enumerate}
\end{lemma}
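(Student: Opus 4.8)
By Theorem~\ref{transitiveontangentsubplanes} I may compute with a convenient $\ell$. The key point is that the shadow of an \orsl\ is a $\PGL$-invariant notion, so I can choose $\ell=\ell_1=\ell_{e,1,0}$ (ranging over $e$) as in Corollary~\ref{limage}, whose shadow was already computed there to be
\[
\D=\{D_e=\sigma(0,\minustheta(e)^2\tau,0)\st e\in\GF(q)\}\cup\{D_\infty=\sigma(0,\tau,0)\}
\]
inside the spread element $[L]$. So the whole statement reduces to: decide when three of these points lie on a special conic of $[L]$. I would first record, exactly as in the proof of Lemma~\ref{cpoints}, what a special conic of a spread element looks like in coordinates --- it is a conic whose extension to $\PG(6,q^3)$ meets the three transversal lines $g,g^\r,g^{\r^2}$; using Lemma~\ref{transversaleqn} and the argument at the end of Lemma~\ref{cpoints}, a special conic in $[L]$ (viewed as $\PG(2,q)$ with the coordinates induced by $\sigma$) has an equation with coefficients in $\GF(q)$ that is satisfied by the transversal point $R=(t_1+t_2\tau-\tau^2,\,t_2-\tau,\,-1)$ and its conjugates; equivalently, ``special'' pins down the conic up to the choice of three of its points, and I can describe the family of special conics through $D_\infty$ explicitly.

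The computation I expect is this. Write $D_e$ in the plane $[L]\cong\PG(2,q)$ via $\sigma$: since $\minustheta(e)^2\tau=(e+\tau^\r)^2(e+\tau^{\r^2})^2\tau$ is an element of $\GF(q^3)$, expanding $\minustheta(e)^2\tau = u_0(e)+u_1(e)\tau+u_2(e)\tau^2$ gives $D_e=(u_0(e),u_1(e),u_2(e))$ where each $u_j$ is a polynomial in $e$ of degree at most $4$ with coefficients in $\GF(q)$ (this is the analogue of the computation $\minustheta(e)\tau=t_0+(e^2+et_2)\tau-e\tau^2$ done in Lemma~\ref{cpoints}, but one degree higher because of the square). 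So $\D\setminus\{D_\infty\}$ is the image of a degree-$4$ rational curve, i.e.\ (projectively) the curve $e\mapsto(\minustheta(e)^2\tau)$, and $D_\infty$ is its point at $e=\infty$. The question of whether three shadow points $D_{e_1},D_{e_2},D_{e_3}$ (say none of them $D_\infty$, the case with $D_\infty$ being handled by the same algebra after clearing) lie on a \emph{special} conic becomes: does the unique conic of $[L]$ through those three points and two of the transversal points $R,R^\r$ (a conic is determined by $5$ points) also pass through the third transversal point $R^{\r^2}$, and is it nondegenerate? Equivalently, after multiplying $D_e$ by the scalar $\minustheta(e)$ to clear denominators (exactly the trick used repeatedly in Lemma~\ref{cpoints}), the points become $\minustheta(e)^3\tau$ and the relevant determinant identity is a polynomial identity in $e_1,e_2,e_3,\tau$ over $\GF(q)$.

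The clean way to run this is via a change of coordinates in $[L]$. In $[L]$, the map $e\mapsto\sigma(\minustheta(e)\tau,0,0)$ traces a special conic (this is precisely what Lemma~\ref{cpoints} shows, with $[T]$ replaced by $[L]$), call its points $C'_e$; its tangent at $C'_e$ is $C'_eT'_e$ with $T'_e=\sigma(\minustheta(e)^2\tau,0,0)$. But $T'_e$ is exactly $D_e$. So the shadow $\D$ of $\ell$ in $[L]$ is precisely the set of ``tangent-direction points'' $T'_e$ of a special conic $C'$ of $[L]$; that is, $\D$ is the dual (in the appropriate sense inside the plane $[L]$) of $C'$. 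Now the problem is a classical fact about conics in $\PG(2,q)$: three tangent lines of a conic meet the ``tangent-point curve'' $\{T'_e\}$ in points that are collinear for $q$ even (all tangents of a conic in even characteristic pass through the nucleus, so $\{T'_e\}$ is a line --- and a line is a degenerate special conic, giving part (1)) and for $q$ odd no three of the $T'_e$ are on a nondegenerate conic. For $q$ even I would verify directly from $D_e=\sigma(\minustheta(e)^2\tau,0,0)$ that these points satisfy a linear equation (the nucleus computation), and that this line is special because its extension meets the transversals; actually it is cleaner to note $\minustheta(e)^2 = \minustheta(e^2)$ when $q$ is even (Frobenius), so $D_e = \sigma(\minustheta(e^2)\tau,0,0)=C'_{e^2}$, whence $\D=C'$ is literally the special conic itself --- wait, that would contradict part (1) calling it a conic; in even characteristic the squaring map is a bijection on $\GF(q)$, so $\{D_e\}=\{C'_{e'}\}$ and $\D$ is the special conic $C'$, which is exactly part (1). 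For $q$ odd, $e\mapsto e^2$ is $2$-to-$1$ so this identification fails, and I must instead show the degree-$4$ parametrization $e\mapsto\minustheta(e)^2\tau$ (equivalently $\minustheta(e)^3\tau$ after scaling, a genuine rational normal-type curve of degree $4$ in a plane) has no three points on a conic of the special pencil; here I would set up the $3\times3$ determinant expressing ``$D_{e_1},D_{e_2},D_{e_3}$ lie on the special conic through $R,R^\r,R^{\r^2}$'' as a polynomial in $e_1,e_2,e_3$ over $\GF(q)$ and show it has no $\GF(q)$-zeros with the $e_i$ distinct --- the main obstacle is carrying out this determinant/factorization computation cleanly, and I would expect the identity to factor as a constant times $\prod_{i<j}(e_i-e_j)$ times a nonzero factor, forcing collinearity-type degeneracy only when two $e_i$ coincide.

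Thus the skeleton is: (i) reduce to $\ell=\ell_1$ and quote $\D$ from Corollary~\ref{limage}; (ii) identify $\D$ with the tangent-point set $\{T'_e\}$ of the special conic $C'$ of $[L]$ built exactly as in Lemma~\ref{cpoints}; (iii) for $q$ even use $\minustheta(e)^2=\minustheta(e^2)$ and bijectivity of squaring to get $\D=C'$, a special conic; (iv) for $q$ odd set up and evaluate the collinearity/conic determinant for three distinct $e_i$ and show it is nonzero, concluding no three shadow points lie on a special conic. The hard part is step (iv)'s explicit determinant computation; everything else is bookkeeping on top of Lemma~\ref{cpoints}.
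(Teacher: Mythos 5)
Your reduction to the coordinatised subline $\ell_1$ and the identification of the shadow points $D_e=\sigma(0,\minustheta(e)^2\tau,0)$ with the auxiliary points $T_e$ of Lemma~\ref{cpoints} (transplanted into $[L]$) is fine as bookkeeping, but both halves of your argument have genuine gaps. For $q$ even, the key identity $\minustheta(e)^2=\minustheta(e^2)$ is false: in characteristic $2$, $\minustheta(e)^2=(e^2+\tau^{2q})(e^2+\tau^{2q^2})$, which involves the conjugates of $\tau^2$, not of $\tau$, so $D_e$ is not projectively a point $C'_{e'}$ of the conic $C'=\{\sigma(0,\minustheta(e')\tau,0)\}\cup\{\sigma(0,\tau,0)\}$. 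A check at $q=2$ (with $\tau^3=\tau+1$) gives shadow points represented by $\tau^6,\tau^2,\tau$, whereas $C'$ is represented by $1,\tau^5,\tau$: different point sets. So $\D$ is some special conic but not $C'$, and your argument does not establish that $\D$ is a conic at all. (Your alternative within the same step, a ``nucleus/linear equation'' argument, also cannot be right, since the statement asserts $\D$ is a conic, not a line; note too that the set $\{T'_e\}$ is not a canonical object attached to $C'$ --- each $T'_e$ is merely one chosen auxiliary point on the tangent line, and $D_\infty$ is not even of that form, being $C'_\infty$ rather than $T'_\infty$.) For $q$ odd, which is the substantive half, you do not carry out the proof: the determinant expressing ``$D_{e_1},D_{e_2},D_{e_3}$ and the transversal points lie on one conic'' is only promised, the expected factorisation $\prod_{i<j}(e_i-e_j)$ times a nonzero factor is speculation, and the cases involving $D_\infty$ and possible degenerate conics through five of the points are waved at. As it stands neither part (1) nor part (2) is proved.

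For comparison, the paper avoids coordinates for this lemma and argues synthetically inside the $3$-space $[\overline\ell]$ using classical facts about twisted cubics in $\PG(3,q)$: for $q$ even the tangent lines of $\N=[\ell]$ lie in a regulus, whose hyperbolic quadric meets $[L]$ in a nondegenerate conic containing (in the cubic extension) the transversal points, giving a special conic; for $q$ odd, the quadric $\H$ through $\N$ and two tangents $n_1,n_2$ contains the unique special conic through the two shadow points $D_1,D_2$ (since $\H^*$ and that conic's extension share five points), and a hypothetical third shadow point $D_3$ on this conic would lie on a chord of $\N$ from the regulus of $\H$ as well as on the tangent $n_3$, contradicting the fact that a point off a twisted cubic lies on at most one chord. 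If you wish to keep a purely coordinate proof you would need to actually perform the conic-membership determinant computation for distinct $e_1,e_2,e_3$ (and the $\infty$ case), and separately exhibit the special conic containing $\D$ when $q$ is even; neither is done in your proposal.
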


\begin{proof}
Let $\ell$ be an \orsl\ of $\PG(2,q^3)$ disjoint from $\li$, such that its extension
$\overline\ell$ meets $\li$ in a point $L$ of $\ST\setminus\{ T\}$. By
Theorem~\ref{sublinesinBB}, in $\PG(6,q)$,
$\N=[\ell]$ is a special twisted cubic in a 3-space
$[\overline\ell]$ that meets $\si$ in the spread element $[L]$. 
Let the point $N_i\in\N$, $i=1,\ldots,q+1$, have tangent $n_i$ to $\N$ and
let $D_i=n_i\cap[L]$. So the \tangentshadow\ of $[\ell]$ is $\D=\{D_1,\ldots,D_{q+1}\}\subset[L]$.
In the
cubic extension $\PG(6,q^3)$, let $[L]^*$ meet the transversal lines $g,g^q,g^{q^2}$ of
the regular spread $\S$ in
the points $P,P^q,P^{q^2}$ respectively. As $\N$ is special, $P,P^q,P^{q^2}$
lie in $\N^*$, the extension of $\N$ to $\PG(6,q^3)$. 

If $q$ is even, then the tangents to a twisted cubic lie in a regulus
\cite[Theorem 21.1.2]{hirs85}. This regulus lies in a unique hyperbolic quadric
$\H$ of the 3-space $[\overline\ell]$. As the twisted cubic $\N$ is disjoint
from $[L]$, $\H$ meets $[L]$ in a non-degenerate conic, so the
\tangentshadow\ 
$\D$ is a non-degenerate conic. 
In the cubic
extension $\PG(6,q^3)$, $\H^*$ contains the twisted cubic $\N^*$,
and so $\H^*$ contains the transversal points $P,P^q,P^{q^2}$. So the conic
$\D^*$ contains $P,P^q,P^{q^2}$, and hence $\D$ is a special conic of $[L]$.

Now suppose $q$ is odd.
Consider the points $N_1,N_2\in\N$ with tangents $n_1,n_2$. By
\cite[Lemma 21.1.6(Cor 2)]{hirs85}, $\N,n_1,n_2$ lie in a hyperbolic
quadric $\H$. 
By \cite[Lemma 2.6]{barw13}, the points $D_1,D_2$ lie in a unique special
conic $\C$ of $[L]$. As $\H^*$ and $\C^*$ both contain the five points
$D_1,D_2,P,P^q,P^{q^2}$, $\H^*$ contains $\C^*$ and so $\H$ contains $\C$.

We now show that $\C$ contains no further point of the \tangentshadow\ 
$\D$.  Suppose to the contrary that $\C$ contains the point $D_3$ (so
$D_3$ lies on the tangent
$n_3$).  
By \cite[Lemma 21.1.7]{hirs85}, the tangents $n_1,n_2$ lie in one regulus
$\R$ of $\H$, and $\R$ consists of chords of $\N$. 
As $D_3\in
\C\subset \H$, it follow that $D_3$ is on line $n$  of the regulus $\R$.
Note that $n$ is a chord of $\N$ as $\R$ consists of chords.
By \cite[Lemma 21.1.8]{hirs85}, $\R$ contains at most two tangents of $\N$,
hence as $\R$ contains $n_1,n_2$, $\R$ does not contain $n_3$. So $n$ is
not the tangent $n_3$. Hence through the point $D_3$, there are two chords of $\N$,
namely $n$ and the tangent $n_3$. This contradicts \cite[Theorem 21.1.9]{hirs85}. 
Hence a special
conic of $[L]$ contains at most two points of a \tangentshadow.
\end{proof}

Next we look at
 the action in $\PG(6,q)$ of a certain Singer cycle acting on the
 \tangentshadow\  of an \orsl, and also acting
on the conic
directrix of an \orsp.

\begin{lemma}\Label{singerorbits}  Let $\Bpi$ be an \orsp\ of $\PG(2,q^3)$
  tangent to $\li$
  and let $\ell$ be an \orsl\ of $\Bpi$ disjoint from $\li$. In the
  Bruck-Bose representation in $\PG(6,q)$,  consider the collineation $\CapitalTheta$ of
  $\PG(6,\r)$ defined in Theorem~{\rm \ref{PGL7r}}.
\begin{enumerate}
\item\Label{soc}
The conic directrix  $\C$ of
$\bbb$ has an orbit of size  $\r^2+\r+1$  under $\langle\CapitalTheta\rangle$.
\item\Label{sod}
The \tangentshadow\ $\D$ of $[\ell]$ has an orbit of size $\r^2+\r+1$ under $\langle\CapitalTheta\rangle$.
\end{enumerate}
\end{lemma}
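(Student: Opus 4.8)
The plan is to use Theorem~\ref{PGL7r}: the homography $\CapitalTheta$ fixes every plane of the spread $\S$ setwise, and $\langle\CapitalTheta\rangle$ induces on each spread element a Singer cycle of order $q^2+q+1$ that is regular on the points of that plane. Both $\C$ (in $[T]$) and $\D$ (in $[L]$) are $(q+1)$-point subsets of a spread element, so in each case the orbit length equals $q^2+q+1$ divided by the order of the setwise stabiliser in $\langle\CapitalTheta\rangle$, and it is enough to show this stabiliser is trivial.

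First I would record two elementary facts. (i) A non-identity element of a Singer cycle of $\PG(2,q)$ fixes no point, so every subgroup $H$ of the Singer cycle acts semiregularly on the $q^2+q+1$ points; hence all point-orbits of $H$ have length $|H|$, and therefore $|H|$ divides the cardinality of any $H$-invariant set of points. (ii) From $q^2+q+1 = q(q+1)+1$ we get $\gcd(q+1,\ q^2+q+1)=1$.

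Now for part~\ref{soc}: the conic directrix $\C$ lies in the spread element $[T]$ and has $q+1$ points, and by Theorem~\ref{PGL7r} the group $\langle\CapitalTheta\rangle$ restricts to a Singer cycle on $[T]$. If $H$ is the stabiliser of $\C$ in $\langle\CapitalTheta\rangle$, then by (i) $|H|$ divides $|\C|=q+1$, and $|H|$ divides $|\langle\CapitalTheta\rangle|=q^2+q+1$; by (ii) $|H|=1$, so the orbit of $\C$ has size $q^2+q+1$. Part~\ref{sod} is identical, working in the spread element $[L]$: the shadow $\D$ of $[\ell]$ consists of $q+1$ distinct points (one on each of the $q+1$ tangents of the twisted cubic $[\ell]$ --- these points being distinct since the tangents are pairwise disjoint, or directly from the coordinates in Corollary~\ref{limage}), $\langle\CapitalTheta\rangle$ restricts to a Singer cycle on $[L]$, and the same divisibility argument forces the stabiliser of $\D$ in $\langle\CapitalTheta\rangle$ to be trivial, giving orbit size $q^2+q+1$.

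The proof is short, and the only point needing a word of justification is the semiregularity in (i) (a standard property of Singer cycles; alternatively one may diagonalise $\CapitalTheta$ over $\GF(q^3)$ using the transversal points of $\S$, which $\CapitalTheta$ fixes, and read off that no nontrivial power of $\CapitalTheta$ can fix a conic or a $(q+1)$-set inside a spread element). A purely computational route is also available: by Lemma~\ref{cpoints} and Corollary~\ref{limage}, $\CapitalTheta$ acts on $\C$ and on $\D$ as multiplication by $\tau$ on the corresponding $(q+1)$-element subsets of $\GF(q^3)^\ast/\GF(q)^\ast\cong\mathbb{Z}_{q^2+q+1}$, and a subset of a cyclic group of order $q^2+q+1$ invariant under a nontrivial translation would be a union of cosets of a nontrivial subgroup, hence of cardinality a nontrivial divisor of $q^2+q+1$, which is impossible for a set of size $q+1$ by (ii).
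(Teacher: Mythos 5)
Your proof is correct, but it takes a genuinely different route from the paper. The paper proves the lemma by brute-force coordinates: it takes the explicit parametrisations of $\C$ (Lemma~\ref{cpoints}) and of $\D$ (Corollary~\ref{limage}), assumes $\CapitalTheta^i$ fixes the set for some $1\le i<\r^2+\r+1$, equates coefficients of $1,\tau,\tau^2$ to force $\tau^i\in\GF(\r)$, and has to run a separate argument for $\r=2$ and a separate ``squares'' variant for the \tangentshadow. You replace all of this by the group-theoretic observation that the group induced by $\langle\CapitalTheta\rangle$ on a spread element is a Singer cycle of order $\r^2+\r+1$ acting regularly (this is exactly Theorem~\ref{PGL7r}), so any subgroup acts semiregularly on the points, hence the setwise stabiliser of a $(\r+1)$-point set inside a spread element has order dividing both $\r+1$ and $\r^2+\r+1$, which are coprime; this makes the stabiliser trivial and the orbit full. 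This argument is uniform in $\r$ (no $\r=2$ case), treats $\C$ and $\D$ identically, and in fact proves the stronger statement that \emph{any} point set of a spread element whose size is coprime to $\r^2+\r+1$ has a full orbit; what the paper's computation buys instead is independence from the abstract regularity statement and a concrete check against the coordinates reused later. Two small points of care: the element $\CapitalTheta$ has order $\r^3-1$ in $\PGL(7,\r)$, with $\CapitalTheta^{\r^2+\r+1}$ acting trivially on $\si$, so ``$\r^2+\r+1$ divided by the stabiliser'' should be read for the induced group on the spread element (the orbit count is unchanged either way); and your argument does need $|\D|=\r+1$, i.e.\ that the shadow points are pairwise distinct, which the paper takes as given in its definition of the \tangentshadow\ and which also follows from the coordinates in Corollary~\ref{limage}, as you note.
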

\begin{proof}
By Theorem~\ref{transitiveontangentsubplanes}, we can without loss of
generality prove this for the \orsp\ coordinatised in
Section~\ref{coord-subplane}. 
The coordinates of the conic directrix $\C$ of $\bbb$ are calculated in Lemma~\ref{cpoints}.
So $\C=\{\sigma(\tau,0,0)\}\cup\{ \sigma(\minustheta(e)\tau,0,0) \st
e\in\GF(\r)\}$, and
$\CapitalTheta^i(\C)=\{\sigma(\tau^{i+1},0,0)\}\cup\{
\sigma(\minustheta(e)\tau^{i+1},0,0) \st e\in\GF(\r)\}$.  Suppose
$\C=\CapitalTheta^i(\C)$ for some $i$ with $1\le i <\r^2+\r+1$. 
Suppose firstly that $\r\ge 3$, so
$\C$ contains at least four points.  Hence there exists
$a,b,e,f\in\GF(\r)$ with $a\ne b$ and $e\ne f$, such that
$\sigma(\minustheta(a)\tau,0,0)\equiv\sigma(\minustheta(e)\tau^{i+1},0,0)$ and 
$\sigma(\minustheta(b)\tau,0,0)\equiv\sigma(\minustheta(f)\tau^{i+1},0,0)$,
thus there exists nonzero $c,d\in\GF(q)$ such that
\begin{eqnarray}
\minustheta(a)\tau=c\minustheta(e)\tau^{i+1},\quad \minustheta(b)\tau=d\minustheta(f)\tau^{i+1}.\label{abef}\end{eqnarray}
Hence $\minustheta(a)/\minustheta(b)=(c/d)\minustheta(e)/\minustheta(f)$ and so $d\minustheta(a)
\minustheta(f)=c\minustheta(b)\minustheta(e)$.  Multiplying both sides by $(a+\tau)(b+\tau)(e+\tau)(f+\tau)$ yields
\begin{eqnarray}\label{anotherone}
d\plustheta(a)\plustheta(f)(be+(b+e)\tau+\tau^2)&=&c\plustheta(b)\plustheta(e)(af+(a+f)\tau+\tau^2).
\end{eqnarray}
Equating the coefficient of $\tau^2$ gives
$d\plustheta(a)\plustheta(f)=c\plustheta(b)\plustheta(e)$, and substituting back into (\ref{anotherone}) gives $be+(b+e)\tau=af+(a+f)\tau$. Equating the coefficient of
$\tau$ gives $b=a+f-e$ and using the constant term gives $(a+f-e)e=af$,
hence $(f-e)(e-a)=0$. As $e\ne f$ we have $e=a$.  So by (\ref{abef}) we
have $\minustheta(a)\tau=c\minustheta(a)\tau^{i+1}$ and so $1/c=\tau^i$. Now
$\tau^i\in\GF(q)$ implies $\r^2+\r+1\bigm| i$, contradicting $1\le i<\r^2+\r+1$.  Hence the orbit of $\C$ under $\langle\CapitalTheta\rangle$ is of size at least $\r^2+\r+1$.  As $\tau^{\r^2+\r+1}\in\GF(\r)$, $\CapitalTheta^{\r^2+\r+1}(\C)=\C$.  Hence the size of the orbit of $\C$  is exactly $\r^2+\r+1$.

Now consider the case $\r=2$.  Let
$\C=\{C_{1,0}=\sigma(\tau,0,0),C_{2,0}=\sigma(\minustheta(0)\tau,0,0),C_{3,0}=\sigma(\minustheta(1)\tau,0,0)\}$
and so
$\CapitalTheta^i(\C)=\{C_{1,i}=\sigma(\tau^{i+1},0,0),C_{2,i}=\sigma(\minustheta(0)\tau^{i+1},0,0),C_{3,i}=\sigma(\minustheta(1)\tau^{i+1},0,0)\}$.
If $\CapitalTheta^i(C_{j,0})=C_{j,i}$ for some $j$, then $\tau^i\in\GF(q)$ and so
$\r^2+\r+1\bigm| i$, contradicting $1\le i<\r^2+\r+1$. 
If $\C=\CapitalTheta^i(\C)$ and  $\CapitalTheta^i(C_{j,0})\neq C_{j,i}$ for $j=1,2,3$,
then  we have either
\begin{eqnarray*}
& \tau=b\minustheta(0)\tau^{i+1},\quad \minustheta(0)\tau=c\minustheta(1)\tau^{i+1},\quad 
\minustheta(1)\tau=d\tau^{i+1},\\
{\rm or\ }& \tau=r\minustheta(1)\tau^{i+1},\quad \minustheta(1)\tau=s\minustheta(0)\tau^{i+1},\quad 
\minustheta(0)\tau=t\tau^{i+1},
\end{eqnarray*} for some nonzero
$b,c,d,r,s,t\in\GF(q)$.
Multiplying the three equations together in each set gives $1=bcd\tau^{3i}$ or $1=rst\tau^{3i}$.  As $\r^2+\r+1=7$ and $(3,7)=1$, this cannot happen for $i<\r^2+\r+1=7$.

We now prove part \ref{sod}.  
By \cite[Lemma~\ref{TS-grouplemma}(\ref{TS-G9})]{barwtgt1}, we can without loss
of generality prove this for the \tangentshadow\ whose coordinates are
calculated in
Corollary~\ref{limage}, namely
$\D=\{(0,\tau,0)\}\cup\{(0,\minustheta(e)^2\tau,0)\st e\in\GF(\r)\}$. 
Using a similar argument to part 1, if $q\geq 3$, there exists $a,b,e,f\in\GF(\r)$ with
$a\ne b$ and $e\ne f$, and so there exists nonzero $c,d\in\GF(q)$ such that
\begin{eqnarray*}
\minustheta(a)^2\tau=c\minustheta(e)^2\tau^{i+1},\quad \minustheta(b)^2\tau=d\minustheta(f)^2\tau^{i+1}.
\end{eqnarray*}
Hence $\minustheta(a)^2/\minustheta(b)^2=(c/d)\minustheta(e)^2/\minustheta(f)^2$.  Thus
$c/d$ is a square in $\GF(\r^3)$, and hence is a square in $\GF(\r)$.  Writing $c/d=x^2$ with $x\in\GF(\r)$ yields
$\minustheta(a)\minustheta(f)=\pm x\minustheta(b)\minustheta(e)$.  
The result now follows using a similar argument to part 
\ref{soc}. The case $q=2$ is similar.
%\\ {\sf Pf of square - delete this later} For $q$ odd, if $y\in\GF(q)$ is a square in $\GF(q^3)$, then
%  $y=\tau^{(q^2+q+1)j} $ and $y=\tau^{2i}$. So $(q^2+q+1)j=2i$, and
%  $q^2+q+1$ odd implies $j$ even, so $y$ is a square of an element in 
%$\GF(q)$. If $q$ even, then every element of $\GF(q)$ is a square in $\GF(q)$.
\end{proof}

Consider a 3-space $\Sigma$ of $\PG(6,q)\setminus\si$ about a splash element $[L]$. There are many
special twisted cubics in $\Sigma$, 
each gives rise to a \tangentshadow\ in $[L]$. We now show that there are
only $q^2+q+1$ distinct \tangentshadows\ in $[L]$. 

\begin{lemma}\Label{countlimage}
There are $\r^2+\r+1$ distinct \tangentshadows\ in a non-centre splash element.
%$[U]\in [\ST]$  (corresponding to special twisted cubics in a 3-space
%about $[U]$).
\end{lemma}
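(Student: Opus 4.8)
The plan is to count \tangentshadows\ in a fixed non-centre splash element $[L]$ by a transitivity argument combined with a careful count of the stabiliser. First I would set up the group action: fix a tangent splash $[\ST]$ with centre $[T]$, and fix an element $[L]\in[\ST]\setminus\{[T]\}$. The relevant group is the stabiliser $G$ in $\PGL(7,q)$ of the Bruck-Bose configuration that fixes $[\ST]$, fixes $[T]$, and fixes $[L]$; this corresponds (via the isomorphism described in Section~\ref{sect:BB}) to a subgroup of $\PGL(3,q^3)$ fixing $\li$, fixing the centre $T$ of the splash, and fixing the point $L\in\ST$. By Theorem~\ref{transitiveontangentsubplanes} and the surrounding results, $G$ acts on the special twisted cubics of the $3$-space $[\overline\ell]$ about $[L]$, hence on their \tangentshadows\ in $[L]$. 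I would first argue that $G$ is transitive on the set of \tangentshadows\ in $[L]$: each \tangentshadow\ arises from the \orsl\ component at $L$ of some \orsp\ $\Bpi$ in the splash (use Theorem~\ref{splashextsubplane} to realise a given \orsl\ through $L$ inside a tangent \orsp\ with splash $\ST$), and $G$ is transitive on these \orsps\ by Theorem~\ref{transitiveontangentsubplanes}, so $G$ is transitive on the \tangentshadows.

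Next I would pin down the number of \tangentshadows\ as $|G \text{-orbit}| = |G|/|\mathrm{Stab}_G(\D)|$, or more cleanly, identify the orbit with a coset space that has size exactly $q^2+q+1$. The cleanest route is to use the Singer cycle $\CapitalTheta$ from Theorem~\ref{PGL7r}: by Lemma~\ref{singerorbits}(\ref{sod}), $\langle\CapitalTheta\rangle$ acts on any one \tangentshadow\ $\D$ with orbit of size exactly $q^2+q+1$, and these $q^2+q+1$ images are pairwise distinct \tangentshadows\ in $[L]$ (distinct because, e.g., a \tangentshadow\ either is or determines a special conic of $[L]$ — by Lemma~\ref{limagelookslike} — and the Singer orbit of special conics through the relevant transversal points has size $q^2+q+1$, cf.\ Lemma~\ref{singerorbits}(\ref{soc})). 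So there are \emph{at least} $q^2+q+1$ distinct \tangentshadows. For the reverse inequality, I would count the total number of special twisted cubics in $[\overline\ell]$ about $[L]$ and show that the fibres of the map $\N\mapsto\D$ (cubic $\to$ its \tangentshadow) all have the same size $q^3(q^3-1)/(q^2+q+1)$; equivalently, by Theorem~\ref{PGL2r3}(3) the group $[E]$ of order $q^3(q^3-1)$ is transitive on the $q^3(q^3-1)$ special twisted cubics of a $3$-space about a spread element, so the number of \tangentshadows\ is $q^3(q^3-1)/s$ where $s$ is the number of special twisted cubics sharing a given \tangentshadow; showing $s = q^3(q^3-1)/(q^2+q+1)$, equivalently that exactly $q^2+q+1$ \tangentshadows\ occur, is what remains.

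The main obstacle is the last step: proving the upper bound, i.e.\ that no more than $q^2+q+1$ \tangentshadows\ occur. I would handle this by exploiting the cover-plane structure. By Theorem~\ref{coverplanes} there are exactly $q^2+q+1$ cover planes of $[\ST]$, and each meets $[L]$ in a line and meets $[T]$ in a line; a \tangentshadow\ $\D$ of an \orsl\ $\ell$ contained in a tangent \orsp\ with splash $\ST$ interacts with these cover planes as described in Theorem~\ref{interaction}(3) — the $q+1$ cover planes $\deltam_m$ (one for each line $m$ of the \orsp\ through $T$) meet $[L]$ exactly in the $q+1$ points of $\D$. Thus each point of a \tangentshadow\ lies on a unique cover plane, so a \tangentshadow\ is determined by a choice of $q+1$ of the $q^2+q+1$ cover planes; running the reverse construction (Construction~\ref{construction}) shows this choice is itself forced once we know which $q+1$ cover planes appear, and the Singer cycle acts on the $q^2+q+1$ cover planes transitively (matching Lemma~\ref{singerorbits}), forcing the \tangentshadows\ through $[L]$ to be permuted in a single orbit of size $q^2+q+1$. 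Combining the lower bound (Singer orbit of a single \tangentshadow) with this cover-plane bound gives exactly $q^2+q+1$ \tangentshadows, as claimed. $\square$
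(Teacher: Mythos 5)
Your lower bound coincides with the paper's: Lemma~\ref{singerorbits}(\ref{sod}) gives a $\langle\CapitalTheta\rangle$-orbit of exactly $\r^2+\r+1$ distinct \tangentshadows\ in $[L]$, so there are at least that many. The genuine gap is in your upper bound. First, it is circular: you appeal to ``running the reverse construction (Construction~\ref{construction})'', but the proof of that construction goes through Theorem~\ref{step2}, whose proof explicitly invokes Lemma~\ref{countlimage} itself, so none of the construction machinery is available here. Second, even setting circularity aside, the cover-plane argument does not bound anything: Theorem~\ref{interaction}(3) shows that a \tangentshadow\ determines a set of $q+1$ of the $q^2+q+1$ cover planes of $[\ST]$, but there are far more than $q^2+q+1$ such $(q+1)$-subsets, so this gives no upper bound on the number of \tangentshadows; and the assertion that transitivity of the Singer cycle on the cover planes ``forces the \tangentshadows\ to form a single orbit of size $q^2+q+1$'' is precisely the statement to be proved, not a consequence of anything you have established. (Your opening transitivity claim for the stabiliser of $(\ST,T,L)$ also overreaches Theorem~\ref{transitiveontangentsubplanes}, which only gives transitivity on all tangent \orsps\ and on all splashes, but that claim is not actually used in your final argument.)

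The missing step is exactly the fibre count you flagged as ``what remains'' and then abandoned, and the paper does it group-theoretically. By Theorem~\ref{PGL2r3}, the group $[E]$ of order $q^3(q^3-1)$ acts regularly on the $q^3(q^3-1)$ special twisted cubics in the $3$-space $[\overline\ell]$ about $[L]$. Inside $[E]$ sits the subgroup $[D]$ of all collineations fixing $\pi_\infty=[L]$ pointwise (the elations and homologies with axis $\pi_\infty$), of order $q^3(q-1)$. Since $[D]$ fixes $[L]$ pointwise it fixes every \tangentshadow\ pointwise, so all cubics in one $[D]$-orbit have the same \tangentshadow; regularity of $[E]$ makes $[D]$ semiregular, so each $[D]$-orbit has size $q^3(q-1)$ and there are at most $|[E]|/|[D]|=\r^2+\r+1$ distinct \tangentshadows. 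Combining with the Singer-orbit lower bound gives equality. In other words, the fibre of the map $\N\mapsto\D$ has size $q^3(q-1)=q^3(q^3-1)/(q^2+q+1)$, which is the computation your plan needed but did not supply.
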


\begin{proof}
Let $\overline\ell$ be 
a line of $\PG(2,q^3)$ that meets $\li$ in the point $L$. We look at certain
collineations from $\PGL(2,q^3)$ acting on the line $\overline\ell$. In
particular, let $E$ be
the subgroup of $\PGL(2,q^3)$ fixing the infinite point $L$ of
$\overline\ell$ ($E$ is 
defined in Theorem~\ref{PGL2r3}). In $\PG(6,q)$, $\overline\ell$ corresponds to a
3-space $[\overline\ell]$ that meets $\si$ in the spread element $[L]$, and $[E]$ is the subgroup of
$\PGL(4,q)$ acting on $[\overline\ell]\cong\PG(3,q)$ that fixes $\pi_\infty=[L]$.

It is straightforward to show that there are
$\r^3(\r^3-1)$ \orsls\  of $\overline\ell$ disjoint from $L$. 
By Theorem~\ref{PGL2r3}, $E$ is transitive on these \orsls. Hence using
Theorem~\ref{sublinesinBB}, in $\PG(6,q)$,  $[E]$ is transitive on the special twisted cubics of $[\overline\ell]$. However, $|E|=q^3(q^3-1)$, hence $[E]$ acts regularly on the special twisted cubics in $[\overline\ell]$.  

To find the number of distinct \tangentshadows\ in $[L]$, we consider the subgroup $[D]$ of
$[E]$, where $[D]$ contains only the collineations of $\PG(3,\r)$ which fix
$\pi_\infty=[L]$ pointwise.  So $D$ contains the elations and
homologies of $\PG(3,\r)$ with axis $\pi_\infty$. As the product of two
collineations with axis $\pi_\infty$ is also a collineation with axis
$\pi_\infty$, $[D]$ contains exactly the elations and homologies with
axis $\pi_\infty$. The group of elations with axis $\pi_\infty$ and centre a point of $\pi_\infty$ is of size $q$, so the number of non-identity elations of $[D]$ is
$(\r^2+\r+1)\times(\r-1)$. The group of homologies with axis $\pi_\infty$ and centre a point not on $\pi_\infty$ has size $q-1$, hence the number of non-identity homologies of $[D]$ is
$\r^3\times(\r-2)$. 
As the product of two collineations with axis $\pi_\infty$ is again a collineation with axis $\pi_\infty$, 
$|[D]|=(\r^2+\r+1)\times(\r-1)+\r^3\times(\r-2)+1=\r^3(\r-1)$.  Note that
all the special twisted cubics in an orbit of $[D]$ have the
same \tangentshadow, since $[D]$ fixes $\pi_\infty$ pointwise, and hence
fixes the
\tangentshadow\ pointwise.  Hence there are at most $|[E]|/|[D]|=\r^2+\r+1$
distinct \tangentshadows.  However, by Lemma~\ref{singerorbits}, there are at
least $\r^2+\r+1$ distinct \tangentshadows\ and so the result follows.
\end{proof}

We now begin the proof of step 2 of Construction~\ref{construction}. We want to
construct a map from
the \tangentshadow\  of a non-centre splash element $[U]$ to a special conic of
the centre $[T]$. We begin by
investigating a map arising from the cover planes of $\ST$. 

Recall from Theorem~\ref{coverplanes} that a line $m$ of the centre $[T]$ lies in a
unique cover plane which meets any non-centre splash element $[U]$ in a
unique point $M$. Conversely, any  point $M$ of a non-centre splash
element $[U]$ lies in a unique cover plane that meets $[T]$ in a line
$m$. 
We call this bijection from 
points of $[U]$ to lines of $[T]$, and lines of $[T]$ to points of $[U]$ the {\bf cover plane map}.

\begin{theorem}\Label{coverplanestoconic}
In $\PG(6,q)$, let $[\ST]$ be a tangent splash with centre $[T]$ and let $[U]$ be any
non-centre element of $[\ST]$.  
Then the cover plane map takes the $q+1$ lines of $[T]$ through a fixed
point of $[T]$ to a special conic of $[U]$.
Further, the points of a special conic in $[U]$ are mapped under the cover plane map to
lines of $[T]$ through a common point. 
\end{theorem}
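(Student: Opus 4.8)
The plan is to reduce to coordinates and use the work already done in Lemma~\ref{coords-of-the-IP}. By Theorem~\ref{transitiveontangentsubplanes} and the transitivity of the relevant group on pairs (centre, non-centre splash element), it suffices to verify the claim for the coordinatised tangent splash $\ST$ of Section~\ref{coord-subplane} and a convenient choice of non-centre element $[U]=[L]$. Concretely, in Lemma~\ref{coords-of-the-IP}(1) the points $I_{P_{e,d},\ell_{e,d,f}}$ and $I_{U_f,\ell_{e,d,f}}$ were computed, and in Lemma~\ref{coords-of-the-IP}(2)--(3) they were organised into lines $I_P$ and cover planes $\deltam_m$. Since $\deltam_m$ is the cover plane through the line $C_eT_e$ of $[T]$ (for $m=m_e$) and meets $[L]$ in a point of the shadow $\D$, the cover plane map sends the tangent line $t_e=C_eT_e$ of $\C$ to the point $D_e$ of $\D$, and sends other lines of $[T]$ through a fixed point of $\C$ to further points of $[L]$. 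So the first task is: fix a point $C_e\in\C$ (a point of $[T]$), run over the $q+1$ lines of $[T]$ through $C_e$, apply the cover plane map, and show the $q+1$ image points of $[U]$ form a special conic.

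First I would set up the cover plane map explicitly in coordinates on $[T]\cong\PG(2,q)$ and $[U]\cong\PG(2,q)$. Using Theorem~\ref{splash-coversets-T5}, a plane meeting $[T]$ in a line and meeting three further non-coreguline splash elements is automatically a cover plane, so the map is pinned down by where it sends three lines of $[T]$; equivalently one can write each cover plane as $\langle \text{line of }[T],\ \text{line of }[U]\rangle$ and read off a projectivity $[T]^\vee\to[U]$ between the dual plane of $[T]$ and $[U]$ (this is the standard fact that a cover-plane pencil induces a projectivity). Then the pencil of $q+1$ lines of $[T]$ through a fixed point $C_e$ is a line of $[T]^\vee$, so its image is a line's worth of points in $[U]$ under a projectivity $[T]^\vee\to[U]$ — but a projectivity of $\PG(2,q)$ does not send a line to a conic, so the subtlety is that the map $[T]^\vee\to[U]$ is \emph{not} a linear projectivity but carries a Frobenius twist. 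I would make this precise by computing, from the coordinates in Table~\ref{table-coords} and Lemma~\ref{coords-of-the-IP}, the explicit form of the cover plane map and observing that the image of the pencil through $C_e$ is parametrised as $\{(0,\minustheta(e')^2\tau,0)\st e'\in\GF(q)\}\cup\{(0,\tau,0)\}$ after a change of coordinates — i.e.\ exactly the shadow $\D$ of Corollary~\ref{limage}, which by Lemma~\ref{limagelookslike} is a special conic when $q$ is even and is the relevant Frobenius-twisted set when $q$ is odd. Wait — more carefully: the shadow $\D$ of Corollary~\ref{limage} is the image of the pencil of tangent lines of $\C$, which is one particular pencil (the "dual conic''), not a pencil through a point of $\C$. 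So I need the image of a \emph{point-pencil} of $[T]$, which I would obtain by the same explicit computation, using that the lines $I_{P_{e,d}}$ for varying $d$ (with $e$ fixed) are exactly the lines of the cover planes $\deltam_{m_e}$ restricted appropriately; these already meet $[U]=[L]$ in the shadow points, and varying which point of $\C$ we pencil about corresponds to varying $e$.

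The key steps, in order: (i) reduce to the coordinatised case via Theorem~\ref{transitiveontangentsubplanes}; (ii) extract from Lemma~\ref{coords-of-the-IP} and Theorem~\ref{splash-coversets-T5} an explicit description of the cover plane map between $[T]$ and a chosen non-centre element $[U]$; (iii) show the image of a fixed point-pencil of $[T]$ is projectively equivalent to the shadow set of Corollary~\ref{limage}, hence (by Lemma~\ref{limagelookslike} for $q$ even, and directly from the coordinates $\{(0,\minustheta(e)^2\tau,0)\}$ together with Lemma~\ref{limagelookslike}(1)'s argument via the tangent-regulus/hyperbolic-quadric in the extension $\PG(6,q^3)$) a special conic of $[U]$ — being special because in $\PG(6,q^3)$ it passes through the three transversal points of $[U]^*$, which I would check by substituting the transversal coordinates from Lemma~\ref{transversaleqn} into its equation; (iv) for the converse, note the cover plane map is a bijection (Theorem~\ref{coverplanes}), so the preimage of a special conic of $[U]$ is some pencil of $q+1$ lines of $[T]$; to see it is a point-pencil, count — there are $q^2+q+1$ special conics in $[U]$ (an orbit fact paralleling Lemma~\ref{countlimage}, or a direct count of special conics through the three transversal points), and $q^2+q+1$ point-pencils in $[T]$, and the map "point-pencil $\mapsto$ its cover-plane image'' lands among special conics by the forward direction and is injective since distinct points of $[T]$ give distinct pencils with distinct images; a bijection between two sets of the same finite size then forces every special conic of $[U]$ to arise this way.

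The main obstacle I anticipate is step (iii): showing the image set is not merely \emph{a} conic but a \emph{special} one, and correctly handling the $q$ odd case, where — as Lemma~\ref{limagelookslike}(2) warns — a shadow is \emph{not} a conic in general. The resolution must be that the specific point-pencil images here genuinely are special conics (the shadow of Corollary~\ref{limage} being the tangent-pencil image, a slightly different object), so I would need to either carry the coordinates through carefully enough to exhibit the conic equation over $\GF(q)$ directly, or argue abstractly that the cover-plane map intertwines the $\langle\CapitalTheta\rangle$-action (Theorem~\ref{PGL7r}) on $[T]$ and $[U]$, so that the image of a Singer-pencil of $[T]$ is a Singer-orbit-stable curve in $[U]$ of size $q+1$ through the three transversal points — and the only such curves are special conics. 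Balancing the explicit and the structural arguments, and making sure the $q$ odd case is covered by the latter rather than accidentally contradicted by the former, is where the care is needed.
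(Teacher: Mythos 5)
Your overall skeleton (reduce to the coordinatised splash, prove the forward direction explicitly, then get the converse by counting point--pencils of $[T]$ against the $q^2+q+1$ special conics of $[U]$) is the same as the paper's, and your step (iv) is essentially the paper's closing argument. But there is a genuine gap at the heart of the forward direction: you never actually produce the cover planes through the $q+1$ lines of a \emph{point}-pencil of $[T]$, and nothing in Lemma~\ref{coords-of-the-IP} supplies them. That lemma only describes the $q+1$ cover planes $\deltam_{m_e}$ through the \emph{tangent} lines of the conic directrix $\C$, and their trace on a non-centre element is precisely the \tangentshadow\ $\D$ --- which, as you yourself note via Lemma~\ref{limagelookslike}, is \emph{not} a conic when $q$ is odd. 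So ``extracting the cover plane map'' from that lemma cannot yield the statement; a fresh computation is unavoidable. The paper does exactly this: it fixes the pencil through $P_1=\sigma(1,0,0)$, writes down candidate planes $\pi_a=\langle m_a,U_a\rangle$ with $U_a=\sigma(0,f(a),0)$, $f(a)=1+at_2-a^2t_1+a\tau+a^2\tau^2$ (and $U_\infty=\sigma(0,-t_1+\tau^2,0)$), certifies that each $\pi_a$ is a cover plane via Theorem~\ref{splash-coversets-T5} by exhibiting explicit points of $[V]$ and $[W]$ in $\pi_a$, then checks that the $U_a$ satisfy $y_1^2-t_1y_2^2+t_2y_1y_2-y_0y_2=0$ and that this conic is special by substituting the transversal point of Lemma~\ref{transversaleqn}. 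Your proposal defers all of this to ``carry the coordinates through carefully,'' but without identifying these objects the key step is simply not done.

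Your fallback ``structural'' argument does not close the gap either. A point-pencil of $[T]$ is not stabilised by $\langle\CapitalTheta\rangle$ (by Theorem~\ref{PGL7r} the Singer group acts regularly on the points of $[T]$, so it moves the vertex), so there is no sense in which the image of one pencil is a ``Singer-orbit-stable curve''; and the claim that the only $(q+1)$-sets of $[U]$ meeting the three transversal points in the extension are special conics is unsupported (and is exactly the kind of claim the shadows show one must treat with care for $q$ odd). Two smaller slips: a cover plane is spanned by a line of $[T]$ and a \emph{point} of $[U]$, not a line of $[U]$; and the cover plane map is not determined by the images of three lines of $[T]$, since (as you later observe yourself) it is not a dual projectivity --- in the paper's coordinates the pencil parameter $a$ enters $f(a)$ quadratically, which is precisely why the image is a conic rather than a line.
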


\begin{proof}
By Theorem~\ref{transitiveontangentsubplanes}, we can without loss of generality prove the
result for the tangent 
splash with coordinates $\ST=\{(a+b\tau,1,0)\st a,b\in\GF(q)\}\cup\{T=(1,0,0)\}$ (given in 
Section~\ref{coord-subplane}).  Note that by
\cite[Theorem~\ref{TS-nos5-version2-part2}]{barwtgt1}, this tangent splash is
uniquely determined by the centre $T=(1,0,0)$ and the three points 
$U=(0,1,0)$, $V=(1,1,0)$ and $W=(\tau,1,0)$. 
We now work in the Bruck-Bose representation in $\PG(6,q)$ and make use of the
generalised Bruck-Bose map for our coordinates.  Recall that these
coordinates use $\tau^3=t_0+t_1\tau+t_2\tau^2$.

We will use the function $f\colon \GF(\r)\rightarrow \GF(\r^3)$ defined by
$f(a)=1+at_2-a^2t_1+a\tau+a^2\tau^2$.
Consider the $q+1$ lines $m_a$, $a\in\GF(q)\cup\{\infty\}$, of $[T]$
through the point $P_1=\sigma(1,0,0)$, where $m_\infty=\langle \sigma(1,0,0),\sigma(\tau^2,0,0)\rangle$ and 
$m_a=\langle \sigma(1,0,0),\sigma(\tau+a\tau^2,0,0)\rangle$.
Also consider the $q+1$ points of $[U]$ with coordinates
$U_\infty=\sigma(0,-t_1+\tau^2,0)$, $U_a=\sigma(0,f(a),0)$, $a\in\GF(q)$. 
We construct $q+1$ planes $\pi_a=\langle m_a, U_a\rangle$,
$a\in\GF(\r)\cup\{\infty\}$ and 
 show that these planes are cover planes of $\ST$.
As $T,U,V,W$ are not on
a common \orsl\ of $\PG(2,q^3)$, by Theorem~\ref{sublinesinBB}, in
$\PG(6,q)$  $[T],[U],[V],[W]$ are not contained in 
a common 2-regulus. Hence
by Lemma~\ref{splash-coversets-T5}, it suffices to show the planes $\pi_a$ meet
$[T]$ in a line, and $[U],[V],[W]$ in points. 

Clearly the planes $\pi_a$ each meet
$[T]$ in a line and $[U]$ in a point. 
Consider the point $V_a=\sigma(f(a),f(a),0)\in[V]$, $a\in\GF(q)$. We have 
$V_a=(1+at_2-a^2t_1)\sigma(1,0,0)
+a\sigma(\tau+a\tau^2,0,0)+U_a$, so $V_a\in\pi_a$.
Similarly, $V_\infty=\sigma(-t_1+\tau^2,-t_1+\tau^2,0)$ is a point in $[V]$
that also lies in $\pi_\infty$.
Consider the point $W_a=\sigma(f(a)\tau,f(a),0)\in[W]$, $a\in\GF(q)$. Now 
$\tau
f(a)=(1+at_2-a^2t_1+a\tau+a^2\tau^2)\tau=a^2t_0+(1+at_2)(\tau+a\tau^2)$, so
$W_a=a^2t_0\sigma(1,0,0)+(1+at_2)\sigma(\tau+a\tau^2,0,0)+U_a$, so
$W_a\in\pi_a$.
Similarly $W_\infty=\sigma(t_0+t_2\tau^2,-t_1+\tau^2,0)$ is a point in
$[W]$ that is also  in $\pi_\infty$. 
Hence the planes $\pi_a$, $a\in\GF(q)\cup\{\infty\}$, are cover planes of $\ST$. 

We now show that the cover planes $\pi_a$, $a\in\GF(q)\cup\{\infty\}$ meet $[U]$
in $q+1$ points that lie on a special conic. 
The points of $[U]$ have coordinates $(0,0,0,y_0,y_1,y_2,0)$, so we can
consider them as points $(y_0,y_1,y_2)\in\PG(2,q)$. 
In this setting, 
$U_a=(1+at_2-a^2t_1,a,a^2)$ and $U_\infty=(-t_1,0,1)$. It is straightforward
to show that $U_a,U_\infty$ lie on the conic $\C_1$ 
of equation $y_1^2-t_1y_2^2+t_2y_1y_2-y_0y_2=0$. 
To prove that $\C_1$ is special, 
we use Lemma~\ref{transversaleqn} where the transversal point $[U]\cap g$ is
calculated to be $Q=(t_1+t_2\tau-\tau^2,t_2-\tau,-1)$. It is straightforward
to show that $Q$ satisfies the equation of $\C_1$, so $Q\in\C_1^*$. Hence $Q^q,Q^{q^2}\in\C_1^*$ and so $\C_1$ is special.

Thus the pencil of lines of $[T]$ through a point $P_i$ of $[T]$ maps to a special
conic $\C_i$ of $[U]$ via the cover plane map.  If $i\neq j$,
then $\C_i\neq \C_j$ as the cover plane map  maps distinct points of
$[U]$ to distinct lines of $[T]$.
There are $q^2+q+1$ distinct pencils in $[T]$ which map to $q^2+q+1$
distinct special conics of $[U]$. 
By \cite[Lemma 2.6]{barw13} there are exactly $\r^2+\r+1$ special conics in
$[U]$.
Hence the cover plane map acts as a bijection from pencils of $[T]$ to
special conics of $[U]$.
\end{proof}

We can now prove step 2 of Construction~\ref{construction}.

\begin{theorem}\Label{step2}
 There is a bijection from the $q^2+q+1$ \tangentshadows\ $\D$ in a non-centre splash
 element $[U]$ to the $q^2+q+1$
  special conics $\C$ in the centre $[T]$ as follows. Firstly, map the point $D_i\in\D$
  under the cover plane map to a line
  $t_i\in[T]$. Then
\begin{enumerate}
\item if $q$ is even, the lines $t_i$ are concurrent in a point $N$; and there
  is a unique special conic of $[T]$ with nucleus $N$. 
\item if $q$ is odd, the set of lines $t_i$ form a conic envelope, and the
  tangential points form a special conic of $[T]$.
\end{enumerate}
\end{theorem}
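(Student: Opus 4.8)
The plan is to combine the dual description of \tangentshadows\ from Lemma~\ref{limagelookslike} with the cover-plane bijection of Theorem~\ref{coverplanestoconic}. By Theorem~\ref{transitiveontangentsubplanes} it suffices to work with the coordinatised tangent splash $\ST$ of Section~\ref{coord-subplane}, and (by the group action) with a single non-centre splash element, say $[U]$. First I would recall from Theorem~\ref{coverplanestoconic} that the cover plane map is a bijection between the $q^2+q+1$ pencils of lines of $[U]$ (i.e.\ points of $[U]$) and the $q^2+q+1$ special conics of $[T]$; dually, it carries points of $[U]$ to lines of $[T]$, and special conics of $[U]$ to pencils of $[T]$. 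So the dual of a \tangentshadow\ $\D\subset[U]$ is a set of $q+1$ lines $\{t_i\}$ of $[T]$, and the content of the theorem is to identify this line-set geometrically, knowing (Lemma~\ref{limagelookslike}) the internal structure of $\D$ in $[U]$.

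For $q$ even: by Lemma~\ref{limagelookslike}(1) the \tangentshadow\ $\D$ is a special conic of $[U]$. Applying the cover plane map, Theorem~\ref{coverplanestoconic} (the ``further'' clause) says the points of a special conic of $[U]$ map to the lines of $[T]$ through a common point $N$. Hence the $q+1$ lines $t_i$ are concurrent in a point $N\in[T]$, as claimed. It then remains to note that in $\PG(2,q)$ with $q$ even, every point is the nucleus of exactly one conic (the conics with a fixed nucleus $N$ form a single pencil-sized family, and counting $q^2+q+1$ points against $q^2+q+1$ special conics via the bijection of Theorem~\ref{coverplanestoconic} pins it down to the special one); so there is a unique special conic of $[T]$ with nucleus $N$. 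The counting statement ``$q^2+q+1$ \tangentshadows\ biject with $q^2+q+1$ special conics'' follows from Lemma~\ref{countlimage} together with the injectivity just established (distinct \tangentshadows\ have distinct dual line-sets because the cover plane map is a bijection, and concurrency points/nuclei are then distinct).

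For $q$ odd: by Lemma~\ref{limagelookslike}(2) no three points of $\D$ lie on a special conic of $[U]$. Dually, under the cover plane map this says that no three of the lines $t_i$ lie in a pencil of $[T]$, i.e.\ no three of the $t_i$ are concurrent. A set of $q+1$ lines in $\PG(2,q)$, $q$ odd, with no three concurrent is precisely a dual conic (conic envelope): dualise and use that $q+1$ points with no three collinear in $\PG(2,q)$, $q$ odd, form an oval, which by Segre's theorem is a conic. So the $t_i$ form a conic envelope, and its line of tangential points (the dual conic's point-conic) is a conic $\C$ of $[T]$. To see $\C$ is special, I would argue as in Theorem~\ref{coverplanestoconic}: extend to $\PG(6,q^3)$, where the transversal points of the spread lie in $\D^*$ (since $\D$ is a \tangentshadow, cf.\ the argument in Lemma~\ref{limagelookslike}), and their images under the extended cover plane map are tangent lines of $\C^*$ at the transversal points of $[T]$, forcing $\C$ to contain those transversal points, hence special. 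The bijection and counting are then as in the even case, using Lemma~\ref{countlimage} and the fact that there are exactly $q^2+q+1$ special conics of $[T]$ (by \cite[Lemma 2.6]{barw13}).

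The main obstacle I expect is the $q$-odd Segre-type step: turning ``no three concurrent'' into ``conic envelope'' rigorously, and then verifying that the resulting conic of $[T]$ is the special one — the dualisation has to be set up carefully so that the transversal points of the spread track correctly through the extended cover plane map, mirroring the coordinate computation already done in the proof of Theorem~\ref{coverplanestoconic}. The even case is comparatively direct since Theorem~\ref{coverplanestoconic} already hands us concurrency, and only the elementary fact about nuclei in even characteristic is needed.
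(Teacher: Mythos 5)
Your structural steps coincide with the paper's: for $q$ even you combine Lemma~\ref{limagelookslike}(1) with the ``further'' clause of Theorem~\ref{coverplanestoconic} to get concurrency, and for $q$ odd you dualise ``no three \tangentshadow\ points on a special conic'' into ``no three of the $t_i$ concurrent'' and invoke Segre; the final count via Lemma~\ref{countlimage} and \cite[Lemma 2.6]{barw13} is also the paper's. The genuine gap is your specialness argument in the odd case. You propose to ``extend the cover plane map to $\PG(6,q^3)$'' and track the transversal points, but the cover plane map is defined only combinatorially through the $q^2+q+1$ cover planes, which are $\GF(q)$-rational planes each meeting $[U]$ in exactly one $\GF(q)$-point; since two planes of $\si$ meeting in a point still meet in only that point after extending the field, the transversal point $P=g\cap[U]^*$ lies on no extended cover plane, so ``the image of $P$ under the extended cover plane map'' is simply undefined. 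Nor does the map extend formally as a duality: it carries pencils of $[T]$ to conics of $[U]$, so it is not induced by any correlation, and to extend it algebraically you would first need an explicit algebraic description of it --- which is exactly what the paper supplies by coordinates. The paper takes the \tangentshadow\ of Corollary~\ref{limage}, exhibits the planes $\langle D_e,C_e,T_e\rangle$, checks via Lemma~\ref{splash-coversets-T5} that they meet $[V]$ and $[W]$ and hence are cover planes, and thereby identifies the image of $\D$ with the tangent lines of the explicit special conic of Lemma~\ref{cpoints}. Without such a computation (or an equivalent algebraic model of the cover plane map together with a precise definition and analysis of your ``$\D^*$''), the claim that the Segre conic is special does not follow.

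A smaller defect lies in your even case: ``in $\PG(2,q)$ with $q$ even every point is the nucleus of exactly one conic'' is false (each point is the nucleus of $q^2(q-1)$ conics); what is needed is that $N$ is the nucleus of exactly one \emph{special} conic, and your parenthetical appeal to the pencil-to-conic bijection of Theorem~\ref{coverplanestoconic} (which concerns conics of $[U]$, not nuclei of conics of $[T]$) does not give this. It is fixable --- either as the paper does, by taking the unique conic of $[T]^*$ through the three transversal points with nucleus $N$ and noting it is Frobenius-invariant and hence a special conic of $[T]$, or by observing that two special conics share the transversal points, so a common nucleus forces common tangents there and hence equality, and then counting against \cite[Lemma 2.6]{barw13} --- but as written the justification is not correct.
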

\begin{proof}
Suppose first that $q$ is even. By Lemma~\ref{limagelookslike}, the \tangentshadow\ is a
special conic in $[U]$. Hence by
Theorem~\ref{coverplanestoconic}, the cover plane map  maps this to
a set of $q+1$ lines through a point $N$ in $[T]$. In $\PG(6,q^3)$, there is a unique conic
$\C^*$
of $[T]^*$ that contains the three transversal points $Q=g\cap[T]^*$, $Q^q$,
$Q^{q^2}$ and has nucleus $N$. As $N\in[T]$, $\C$ is a conic of
$[T]$. Further, $\C$ is non-degenerate as all special conics are
non-degenerate by \cite[Lemma 2.6]{barw13}.
So we have a map from \tangentshadows\ of $[U]$ to special conics of $[T]$
with distinct \tangentshadows\ mapping to distinct special conics. As
there are $q^2+q+1$ \tangentshadows\ in $[U]$ and $q^2+q+1$ special conics in
$[T]$ (by Lemma~\ref{countlimage} and \cite[Lemma 2.6]{barw13}),
this map is  a bijection.

Now suppose $q$ is odd. 
By Lemma~\ref{limagelookslike}, the \tangentshadow\ is a
set of $q+1$ points in $[U]$, no three in a special conic of $[U]$.
Hence by
Theorem~\ref{coverplanestoconic}, the cover plane map  maps this to
a set of $q+1$ lines in $[T]$, no three concurrent. 
By Segre \cite{segr55} this is a dual conic, and so the tangential points form a
conic of $[T]$. To show that this conic is special, we need to use coordinates.

As in the proof of Theorem~\ref{coverplanestoconic}, without loss of
generality we let $\ST$ be the tangent splash with centre $T=(1,0,0)$ and
containing the three points $U=(0,1,0)$, $V=(1,1,0)$ and $W=(\tau,1,0)$. We now work in
$\PG(6,q)$ using coordinates. 
By
\cite[Lemma~\ref{TS-grouplemma}(\ref{TS-G9})]{barwtgt1}, we can without loss of
generality prove the result for the \tangentshadow\ whose coordinates are
calculated in Corollary~\ref{limage}, namely $
\D=\{D_e=\sigma(0,\minustheta(e)^2\tau,0)\st e\in\GF(\r)\cup\{\infty\}\}$.
We will show that the map described in the statement of the theorem maps
this \tangentshadow\ to the tangent lines of the special conic
$\C=\{C_e=\sigma(\minustheta(e)\tau,0,0)\st e\in\GF(\r)\cup\{\infty\}\}$ in
$[T]$; and so maps it to the conic $\C$. 
By Lemma~\ref{cpoints}, the tangent to $\C$ at the
point $C_e$ is the line $C_eT_e$, with
$T_e=\sigma(\tau\minustheta(e)^2,0,0)$, $e\in\GF(q)$ and
$T_\infty=\sigma(\tau^2,0,0)$.

For $e\in\GF(q)\cup\{\infty\}$, consider the plane $\pi_e=\langle D_e, C_e,T_e\rangle$. 
 It contains a line $C_eT_e$ of
$[T]$ and a point $D_e$ of $[U]$. By Lemma~\ref{splash-coversets-T5}, to
show that $\pi_e$ is a cover plane of the tangent 
splash $[\ST]$, we just need to show that $\pi_e$  meets $[V]$
and $[W]$. 
Suppose first that $e\in\GF(q)$.
Consider the point $V_e=\sigma(\tau\minustheta(e)^2,\tau\minustheta(e)^2,0)$ of
$[V]$, $e\in\GF(q)$, as $V_e=T_e+D_e$, $V_e\in\pi_e$, so $\pi_e$ meets
$[V]$ in a point. 
Consider the point $W_e=\sigma(\tau^2\minustheta(e)^2,\tau\minustheta(e)^2,0)$ of
$[W]$, $e\in\GF(q)$. Now $W_e=\plustheta(e)C_e-e T_e+D_e\in\pi_e$, so
$\pi_e$ meets $[W]$ in a point.
Hence $\pi_e$ is a cover plane of the splash $[\ST]$, $e\in\GF(q)$.
Now let $V_\infty=\sigma(\tau,\tau,0)\in[V]$, and
$W_\infty=\sigma(\tau^2,\tau,0)\in[W]$. As $V_\infty=C_\infty+D_\infty$
and $W_\infty=T_\infty+D_\infty$, we have
$V_\infty,W_\infty\in\pi_\infty$. Hence $\pi_\infty$ is a cover plane of $[\ST]$.

 Thus
the cover plane map maps the point $D_e$
of $\D$ to the tangent  $C_eT_e$ of $\C$, $e\in\GF(q)\cup\{\infty\}$. Hence we have a map from the
\tangentshadow\ $\D$ of $[U]$ to the special conic $\C$ of $[T]$.  
By 
Lemma~\ref{countlimage}, there are $q^2+q+1$ \tangentshadows\ in
$[U]$,
and by 
\cite[Lemma 2.6]{barw13} there are $q^2+q+1$ special conics in $[T]$.
As the cover plane map maps distinct points of $[U]$ to distinct lines of
$[T]$, it follows that distinct \tangentshadows\ are mapped to distinct
special conics. 
Hence our map from $\D$ to $\C$ is a bijection from
\tangentshadows\ of $[U]$ to special conics of $[T]$. 
\end{proof}

This completes the proof of step 2 of Construction~\ref{construction}.

\begin{corollary}\Label{cor-phi} These results give us a map $\phi$ from the special twisted cubic
$\N=[\ell]$ to a special conic $\C$ in $[T]$ by mapping:
a point $N_i\in\N$ to a point $D_i$ in the
\tangentshadow\ $\D$ in $[L]$, then to a line $t_i$ in $[T]$, then to a point $C_i=t_i\cap\C$. 
\end{corollary}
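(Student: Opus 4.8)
The plan is to observe that $\phi$ is nothing more than the composition of three maps, each of which has already been shown to be well-defined (indeed a bijection) by the results developed above, so the only real work is to check that the hypotheses line up and then to assemble the pieces.

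First I would fix the domain and the opening arrow. Since $\ell$ is disjoint from $\li$, Theorem~\ref{sublinesinBB} tells us that $\N=[\ell]$ is a special twisted cubic lying in the $3$-space $[\overline\ell]$ about the spread element $[L]$, where $L=\overline\ell\cap\li$. The standing hypothesis on $\ell$ is precisely that $L\in\ST\setminus\{T\}$, so $[L]$ is a \emph{non-centre} element of $[\ST]$. Through each point $N_i$ of a twisted cubic there is a unique tangent line, so its intersection $D_i$ with $[L]$ is well-defined; this is exactly step~1 of Construction~\ref{construction}, yielding the \tangentshadow\ $\D=\{D_1,\dots,D_{q+1}\}$ of $[\ell]$ in $[L]$.

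Next I would invoke Theorem~\ref{step2}, applied with the non-centre splash element taken to be $[L]$. It gives that the cover plane map sends the \tangentshadow\ $\D$ to the set of $q+1$ tangent lines $t_1,\dots,t_{q+1}$ of a unique special conic $\C$ of $[T]$ (the concurrency/nucleus description for $q$ even, the dual-conic description for $q$ odd); in particular the assignment $D_i\mapsto t_i$ is well-defined, and $\C$ is the conic named in the statement. Finally, as each $t_i$ is tangent to $\C$, it meets $\C$ in exactly one point $C_i=t_i\cap\C$, so $t_i\mapsto C_i$ is well-defined. Composing the three arrows gives a well-defined map $\phi\colon\N\to\C$, $N_i\mapsto C_i$, as claimed.

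There is no genuine obstacle here: each of the three steps is handed to us by an earlier result (tangent uniqueness for twisted cubics, then Theorem~\ref{step2}, then tangency of $t_i$ to $\C$), and the one point that needs a word of care—that the intermediate object is a \tangentshadow\ in a \emph{non-centre} splash element—is exactly what the hypothesis $L\in\ST\setminus\{T\}$ supplies. If desired I would additionally record that $\phi$ is a bijection: the map $N_i\mapsto D_i$ is a bijection of $\N$ onto $\D$ (distinct tangents of a twisted cubic meet $[L]$ in distinct points, by Lemma~\ref{limagelookslike} and \cite[Theorem 21.1.9]{hirs85}), the cover plane map is a bijection by Theorem~\ref{coverplanestoconic}, and $t_i\mapsto t_i\cap\C$ is a bijection from the tangent lines of $\C$ onto its points.
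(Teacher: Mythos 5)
Your proposal is correct and follows the same route the paper intends: the corollary is exactly the composition of step~1 of Construction~\ref{construction} (tangent of $[\ell]$ at $N_i$ meets the plane $[L]$ in a unique point $D_i$, since $L\in\ST\setminus\{T\}$ makes $[L]$ a non-centre splash element), the cover plane map together with Theorem~\ref{step2} (the $t_i$ are the tangent lines of a unique special conic $\C$ of $[T]$), and the fact that a tangent line meets $\C$ in one point. The added remark that each stage is bijective is a harmless bonus consistent with Theorem~\ref{coverplanestoconic} and the chord-uniqueness property of twisted cubics.
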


The next theorem shows that this map $\phi$ from $N_i$ to $C_i$, $i=1,\ldots,q+1$, is a projectivity. 
Note that we can easily define a projectivity $\eta$ from $\N$ to $\C$ as follows. Let $\N^*$ have transversal points $P=g\cap[L]^*,\ P^q,P^{q^2}$, and
let $\C^*$ have transversal points $Q=g\cap[T]^*,\ Q^q,Q^{q^2}$. As a
projectivity is uniquely determined by three points, there is a
projectivity $\eta$ from $\N^*$ to $\C^*$ that maps $P\mapsto Q$, $P^q\mapsto
Q^q$, $P^{q^2}\mapsto Q^{q^2}$. The next result shows that this
projectivity $\eta$ also maps
$N_i$ to $C_i$, $i=1,\ldots,q+1$, and so $\eta=\phi$.
Note that the projectivity $\eta$ determines a unique ruled
surface $\vbbb$, so $\vbbb$ is the unique ruled surface  with directrices
$\C,\N$ such that in the cubic extension $\PG(6,q^3)$, $\vbbb^*$ contains the
transversals of the regular spread $\S$. 

\begin{theorem}\Label{nrctoconic}
Let $\N$ be a special twisted cubic in a 3-space of $\PG(6,q)\setminus\si$ about a non-centre
splash
element. Let $\C$ be the special conic in $[T]$
constructed from $\N$ by the map $\phi$, where $\phi$ maps a point
$N_i\in\N$ to a point $C_i\in\C$, $i=1,\ldots,q+1$ as described in Corollary~\ref{cor-phi}. 
Let $\vbbb$ be the unique ruled surface  with directrices
$\C$ and $\N$ such that in the cubic extension $\PG(6,q^3)$, $\vbbb^*$ contains the
transversals of the regular spread $\S$.
Let $\eta$ be the projectivity of $\vbbb$ that maps $\N$ to $\C$. Then
$\eta=\phi$, so $N_iC_i$, $i=1,\ldots,q+1$, are the generator lines of
$\vbbb$. 
\end{theorem}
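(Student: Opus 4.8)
The plan is to reduce to the coordinatised subplane of Section~\ref{coord-subplane}, to read off the map $\phi$ from the computations already carried out in Corollary~\ref{limage}, Lemma~\ref{coords-of-the-IP} and the proof of Theorem~\ref{step2}, and then to identify the ruled surface carrying the lines $N_iC_i$ with $\vbbb$ via the uniqueness clause stated just before the theorem.

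By Theorem~\ref{transitiveontangentsubplanes}, together with the transitivity of the stabiliser of a tangent splash on the \orsls\ disjoint from $\li$ that lie on a line meeting $\ST\setminus\{T\}$, it suffices to prove the result for the \orsp\ $\Bpi$ of Section~\ref{coord-subplane}, its splash $\ST$, and the \orsl\ $\ell_1=\ell_{e,1,0}$ of Corollary~\ref{limage}, whose extension meets $\li$ in $L=(0,1,0)\in\ST\setminus\{T\}$. Then $\N=[\ell_1]$ is the special twisted cubic with $q+1$ points $N_e=[P_{e,1}]$, $e\in\GF(q)\cup\{\infty\}$ (where $N_\infty=[U_0]$). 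I would then chase $\phi$ through these points: by Corollary~\ref{limage} the shadow point of $\N$ at $N_e$ is $D_e=\sigma(0,\minustheta(e)^2\tau,0)$; by the proof of Theorem~\ref{step2} the unique cover plane of $[\ST]$ through $D_e$ is $\langle D_e,C_e,T_e\rangle$, and it meets $[T]$ in the line $C_eT_e$; and by Lemma~\ref{cpoints} this line is the tangent to $\C$ at $C_e$. Hence $\phi(N_e)=C_eT_e\cap\C=C_e$. Lemma~\ref{cpoints} also identifies $C_e$ as the point in which the generator $[m_e]$ of the ruled surface $\bbb$ meets $[T]$, and since $P_{e,1}=m_e\cap\ell_{e,1,0}$ lies on $m_e$ we have $N_e=[P_{e,1}]\in[m_e]$; therefore $N_eC_e=[m_e]$, so the lines $\{N_eC_e\st e\in\GF(q)\cup\{\infty\}\}$ are exactly the $q+1$ generators of $\bbb$.

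It remains to show $\bbb=\vbbb$, which forces $\eta=\phi$. The key point is that $\N=[\ell_1]$ is a section of the scroll $\bbb$: it meets each generator $[m_e]$ in the single point $N_e$, and this persists in $\PG(6,q^3)$ because a line meets a twisted cubic in at most two points and $\N^*\cap[m_e]^*$ is defined over $\GF(q)$. Consequently, together with the conic directrix $\C$ the surface $\bbb$ is a ruled surface defined by a projectivity from $\C$ to $\N$ (following the ruling of a rational normal scroll between two of its directrix curves is a projectivity, as in Theorem~\ref{FFA-subplane-tangent}(2)(b)), and by Theorem~\ref{FFA-subplane-tangent}(2)(c) the extension $\bbb^*$ contains the transversals $g,g^q,g^{q^2}$. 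By the uniqueness asserted just before the theorem, $\bbb=\vbbb$; hence the projectivity $\eta$ of $\vbbb$ carrying $\N$ to $\C$ is the one induced by the ruling, namely $N_e\mapsto C_e$. Since this agrees with $\phi$ at the $q+1\ge3$ points $N_e$, we conclude $\eta=\phi$, and so $N_iC_i$, $i=1,\ldots,q+1$, are the generators of $\vbbb$.

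I expect the main obstacle to be precisely this last identification --- being sure that $\N=[\ell_1]$ can genuinely serve as the twisted cubic directrix in the uniqueness statement for $\vbbb$, i.e.\ that the ruling of $\bbb$ induces a \emph{projectivity} $\C\to\N$ and not merely a bijection. If one wishes to sidestep it, the same conclusion follows directly in $\PG(6,q^3)$: since $\N$ is special, $P=g\cap[L]^*\in\N^*$; since the generators of the smooth scroll $\bbb^*$ are pairwise disjoint, the generator of $\bbb^*$ through $P$ is $g$ itself; since $\C$ is special, $g$ meets $\C^*$ in $Q=g\cap[T]^*$; and similarly for $g^q,g^{q^2}$. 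Thus the (projective) ruling map $\N^*\to\C^*$ takes $P,P^q,P^{q^2}$ to $Q,Q^q,Q^{q^2}$, so it coincides with $\eta$, and so it sends $N_e\mapsto C_e=\phi(N_e)$. A third, purely computational, route would be to coordinatise $\eta$ from the transversal points of Lemma~\ref{transversaleqn} and verify $\eta(N_e)=C_e$ by hand, but this is considerably more laborious.
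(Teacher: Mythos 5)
Your reduction to the coordinatised subplane, the chase $\phi(N_e)=C_e$ through Corollary~\ref{limage}, the cover planes $\langle D_e,C_e,T_e\rangle$ from the proof of Theorem~\ref{step2}, and Lemma~\ref{cpoints}, and your observation that the lines $N_eC_e$ are exactly the generators $[m_e]$ of $\bbb=[\Bpi]$ are all correct, and this is a genuinely different strategy from the paper's: the paper never brings $\bbb$ into this proof, but instead writes $N_e\equiv M_N(1,e,e^2,e^3)^t$ and $C_e\equiv M_C(1,e,e^2)^t$ to see directly that $\phi$ is a projectivity, and then checks that its extension sends the transversal points $N_{-\tau},N_{-\tau^q},N_{-\tau^{q^2}}$ to $C_{-\tau},C_{-\tau^q},C_{-\tau^{q^2}}$, which forces $\phi=\eta$.

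However, the step you flag yourself is a genuine gap, and your proposed sidestep does not close it. To invoke the uniqueness clause defining $\vbbb$ you must know that $\bbb$, ruled by the lines $N_eC_e$, is a ruled surface \emph{defined by a projectivity} from $\C$ to the particular twisted cubic $\N=[\ell_1]$. Theorem~\ref{FFA-subplane-tangent}(2)(b) asserts this only for the one twisted cubic directrix $\bbn$ exhibited in that theorem; nothing quoted in this paper says that every unisecant cubic $[\ell]$, for $\ell$ a line of $\Bpi$ not through $T$, inherits the property, and $q+1$ pairwise disjoint lines joining a conic to a twisted cubic do not a priori induce a projectivity between them. Your $\PG(6,q^3)$ argument is circular on precisely this point: calling the ruling map $\N^*\to\C^*$ ``(projective)'' assumes the conclusion, and knowing that it sends $P,P^q,P^{q^2}$ to $Q,Q^q,Q^{q^2}$ identifies it with $\eta$ only \emph{among projectivities}; a non-projective bijection could agree with $\eta$ at those three points. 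To repair your route you would need to transfer the projectivity property from $\bbn$ to $[\ell_1]$, e.g.\ via a central collineation of $\PG(2,q^3)$ with centre $T$ stabilising $\Bpi$ (it fixes $\li$ and every line $m_e$, hence every generator of $\bbb$, and maps one directrix cubic to the other); otherwise you are forced back to showing by hand that $e\mapsto N_e$ and $e\mapsto C_e$ are compatible parametrisations --- which is your ``third, computational route'' and is in fact the paper's proof, and less laborious than you anticipate.
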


%Moreover, in the cubic extension, this projectivity maps the transversal
%points of $\N$ to the corresponding transversal points of $\C$, that is, if
%we consider the ruled surface with directrices $\N$ and $\C$ defined by the projectivity,
%then in the cubic extension, it contains the transversals of $\S$. 

%\begin{lemma}\label{nrctoconic}
%Let $\ST$ be a tangent splash with centre $[T]$ and let $[U]$ be another
%splash element.
%Then any special twisted cubic $\N$ in a 3-space about $[U]$ determines
%a unique special conic $\C$ in $[T]$ and a
%projectivity $p$ from the points of $\N$ to the points of $\C$ with the
%property that the ruled surface with directrices $\C,\N$ defined by $p$ contains the transversals of
%the spread $\S$.
%\end{lemma}

\begin{proof}
As in the proof of Theorem~\ref{coverplanestoconic}, without loss of generality we may take the splash
$\ST$ to have centre $T=(1,0,0)$ and three points $U=(0,1,0)$, $V=(1,1,0)$ and $W=(\tau,1,0)$.
Further, by 
\cite[Lemma~\ref{TS-grouplemma}(\ref{TS-G7},\ref{TS-G9})]{barwtgt1}
%Lemma~\ref{grouplemma}(\ref{G7},\ref{G9})
we can without loss of generality take the twisted cubic
$\N=[\ell]$ to correspond to the \orsl\
$\ell=\{(e+\tau,e,e+\tau)\st e\in\GF(q)\}\cup\{(1,1,1)\}$ of $\PG(2,q^3)$.

 Using the notation from Table~\ref{table-coords}, we have
 $\ell=\ell_1=\ell_{e,1,0}$. The line $\ell$ has points 
$P_{e,1}=(e+\tau,e,e+\tau)\equiv(\plustheta(e),e\minustheta(e),\plustheta(e))$,
$e\in\GF(q)$ and $P_\infty=U_0=(1,1,1)$. In
$\PG(6,q)$ label the points of the twisted cubic $\N=[\ell]$ as 
$N_e=P_{e,1}$, $e\in\GF(q)$, and $N_\infty=P_\infty$.
By Corollary~\ref{limage}, the shadow of $N_e$ is the point
$D_e=\sigma(0,\tau\minustheta(e)^2,0)$, $e\in\GF(q)$, and the shadow of
$N_\infty$ is $D_\infty=\sigma(0,\tau,0)$. 

As $\tau$ satisfies the polynomial $x^3-t_2x^2-t_1x-t_0$ we have
$\tau\tau^q\tau^{q^2}=t_0$, $\tau+\tau^q+\tau^{q^2}=t_2$ and
$t_0/\tau=-t_1-t_2\tau+\tau^2$. Hence for $e\in\GF(q)$, 
$N_e=\sigma(\plustheta(e),e\minustheta(e),\plustheta(e))=(\plustheta(e),0,0,e^3+t_2e^2-t_1e,-e^2-t_2e,e,\plustheta(e))$.
Also note that  $\plustheta(e)=e^3+t_2e^2-t_1e+t_0$. 
We can extend
$\N$ to a twisted cubic $\N^*$ of $\PG(6,q^3)$ where $\N^*=\{N_e
\st e\in\GF(q^3)\cup\{\infty\}\}$. 
  We can calculate $N_{-\tau}\equiv(0,0,0,\ t_2\tau+t_1-\tau^2,t_2-\tau,-1,\
  0)$. Hence by Lemma~\ref{transversaleqn}, $N_{-\tau}$ is the transversal point
  $g\cap[U]$.  Similarly
  $N_{-\tau^{\r}}=g^{\r}\cap [U]$ and $N_{-\tau^{\r^2}}=g^{\r^2}\cap
  [U]$.

The proof of Theorem~\ref{step2} shows that the bijection $\phi$ maps $\N$
to the conic
$\C=\{C_e=\sigma(\minustheta(e)\tau,0,0)\st e\in\GF(q)\cup\{\infty\}\}$ in $[T]$
with $\phi(N_e)=C_e$, $e\in\GF(q)\cup\{\infty\}$. 
Note that the proof of Lemma~\ref{cpoints} shows that $\C$ is a special
conic, and that when we expand $\sigma$, we have
$C_e=(t_0,e^2+t_2e,-e,\ 0,0,0, \ 0)$,
$e\in\GF(\r)\cup\{\infty\}$. We can extend $\C$ to a conic of
$\PG(6,q^3)$, by taking
$e\in\GF(\r^3)\cup\{\infty\}$.  Now $C_{-\tau}=(t_0,\tau^2-t_2\tau,\tau,\ 0,0,0,\ 0)\equiv(t_2\tau+t_1-\tau^2,t_2-\tau,-1,\ 0,0,0,\ 0)$ which is the transversal point
  $g\cap[T]$.  Similarly, $C_{-\tau^\r}=g^\r\cap[T]$ and $C_{-\tau^{\r^2}}=g^{\r^2}\cap[T]$.

We now show that the mapping $\phi\colon N_e\mapsto C_e$,
$e\in\GF(q)\cup\{\infty\}$, is a projectivity.  Firstly, note that the map
$(1,e,e^2,e^3)\mapsto (1,e,e^2)$ for $e\in\GF(\r)\cup\{\infty\}$ is a
projectivity.  Now the point $C_e$ is equivalent to $M_C(1,e,e^2)^t$ and
the point $N_e$ is equivalent to $M_N(1,e,e^2,e^3)^t$ where
\[
M_C=\begin{pmatrix}
t_0&0&0\\0&t_2&1\\0&-1&0
\end{pmatrix},
\quad
\quad
M_N=\begin{pmatrix}
0&-t_1&t_2&1\\
0&-t_2&-1&0\\
0&1&0&0\\
t_0&-t_1&t_2&1
\end{pmatrix}.
\] Hence the map $\phi$ is a projectivity.  Extend $\phi$ in the natural way to a
projectivity  of $\PG(6,q^3)$. It is straightforward to verify that this
projectivity maps $N_{-\tau},N_{-\tau^\r},N_{-\tau^{q^2}}$ to
$C_{-\tau},C_{-\tau^\r},C_{-\tau^{q^2}}$ respectively. Thus the projectivity $\phi$ corresponds to the
unique ruled surface with directrices $\C$ and $\N$ containing the
transversals of $\S$, that is $\phi=\eta$.
\end{proof}

We now complete the proof of Construction~\ref{construction}.

{\bf Proof of Construction~\ref{construction}\ } 
In $\PG(2,q^3)$, let $\ST$ be a tangent splash of $\li$ and let $\ell$ be a
fixed \orsl\ such that $\overline\ell$ meets $\li$ in a point of $\ST\setminus\{T\}$.
By Theorem~\ref{nrctoconic}, Steps 1, 2, 3 of Construction~\ref{construction}
give a ruled surface $\vbbb$ of $\PG(6,q)$ that corresponds to an \orsp\ $\pi$
of $\PG(2,q^3)$. As $\vbbb$ contains the twisted cubic $\N=[\ell]$, $\pi$
contains the \orsl\ $\ell$. It remains to show that $\pi$ has tangent splash $\ST$.

By Theorem~\ref{splashextsubplane}, in $\PG(2,q^3)$, there is a
unique \orsp\ that has centre $T$, tangent splash $\ST$ and contains
$\ell$, denote this unique subplane by $\Bpi$. 
We will use coordinates to show that $\pi=\Bpi$, by showing that in
$\PG(6,q)$, $\vbbb=[\Bpi]$.

As before, without loss of generality let
$\ST=\{(a+b\tau,1,0)\st a,b\in\GF(q)\}\cup\{T=(1,0,0)\}$ and $\ell=\ell_1$. So
the unique \orsp\ containing $\ell$ with splash $\ST$ is the \orsp\ $\Bpi$
coordinatised in Section~\ref{coord-subplane}.
In Lemma~\ref{cpoints}, the coordinates of the conic directrix of
$[\Bpi]$ in $\PG(6,q)$ are calculated. The proof of Theorem~\ref{nrctoconic} shows that $\vbbb$
has the same conic directrix. So $\vbbb$ and $\bbb$ are ruled surfaces with the same conic directrix
and same twisted cubic directrix. 
By 
Theorem~\ref{FFA-subplane-tangent}, 
in the cubic extension $\PG(6,q^3)$, $[\Bpi]^*$ contains the three transversals
of the regular spread $\S$. 
By Theorem~\ref{nrctoconic}, $\vbbb^*$ also contains the three transversals
of $\S$. 
Since a projectivity is uniquely determined by the image of three points,
both $\vbbb$ and $[\Bpi]$ are determined by the same projectivity, and so 
 $\vbbb=[\Bpi]$. Hence in $\PG(2,q^3)$, 
$\Bpi$ and $\pi$ are the same \orsp, so the \orsp\ $\pi$ constructed in
Construction~\ref{construction} is the unique \orsp\ that has
tangent splash $\ST$ and contains the \orsl\ $\ell$. 
\hfill$\square$

This completes the proof of Construction~\ref{construction}.

\section{Conclusion}

This paper concludes the study of tangent \orsps\ in $\PG(2,q^3)$ in the Bruck-Bose representation in $\PG(6,q)$. In \cite{barw12}, we investigated the \orsls\ and \orsps\ of $\PG(2,q^3)$, and in particular showed that the tangent \orsps\ correspond to certain ruled surfaces in $\PG(6,q)$. In \cite{barw13}, we characterised which ruled surfaces of $\PG(6,q)$ correspond to tangent \orsps. The main results of \cite{barwtgt1} involved finding properties of the tangent splash of a tangent \orsp. Further, we investigated the tangent space of a point of the ruled surface in $\PG(6,q)$ representing a tangent \orsp. In this paper, building on earlier results, we investigated properties of the tangent splash in the $\PG(6,q)$ setting. Further, we gave a geometrical construction of a tangent \orsp\ in $\PG(6,q)$, beginning with a tangent splash and one appropriate \orsl.

\end{document}